\DeclareMathOperator*{\argmin}{arg\,min}
\newcommand{\blue}[1]{{\color{black}{#1}}}
\newcommand{\ts}{\tilde{s}}
\newcommand{\logks}{\log\frac{\rho_k}{\rho^*}}
\newcommand{\Cov}{\mathrm{Cov}}
\newcommand{\Var}{\mathrm{Var}}
\newcommand{\logs}{\log\frac{\rho}{\rho^*}}
\newcommand{\tr}{\tilde{r}}
\newcommand{\KL}{\mathrm{D}_\mathrm{KL}} 
\newcommand{\FI}{\mathcal{I}}
\newcommand{\R}{\mathbb{R}}
\newcommand{\E}{\mathbb{E}}
\newcommand{\G}{\mathcal{G}}
\newcommand{\K}{\mathcal{K}}
\newtheorem{assumption}{Assumption}
\newcommand{\supp}{\text{supp}}
\newcommand{\dist}{\text{dist}}
\newcolumntype{C}{>{\centering\arraybackslash}m{.35\linewidth}}
\newcolumntype{D}{>{\centering\arraybackslash}m{.34\linewidth}}
\newcolumntype{E}{>{\centering\arraybackslash}m{.28\linewidth}}
\newcommand{\tP}[1]{{\tilde{P}}}
\DeclareMathOperator{\Tr}{Tr}
\title{Convergence of Noise-Free Sampling Algorithms with Regularized Wasserstein Proximals} 
\date{}
\author{\name Fuqun Han \email fqhan@math.ucla.edu \\
       \addr Department of Mathematics\\
       University of California, Los Angeles\\
       Los Angeles, CA, USA
       \AND
       \name Stanley Osher \email sjo@math.ucla.edu \\
       \addr Department of Mathematics\\
       University of California, Los Angeles\\
       Los Angeles, CA, USA
       \AND
       \name Wuchen Li \email wuchen@mailbox.sc.edu \\
       \addr
       Department of Mathematics
       \\University of South Carolina\\ Columbia, SC, USA}
\begin{document}
\maketitle

\begin{abstract}
In this work, we investigate the convergence properties of the backward regularized Wasserstein proximal (BRWP) method for sampling a target distribution. The BRWP approach can be shown as a semi-implicit time discretization for a probability flow ODE  with the score function whose density satisfies the Fokker-Planck equation of the overdamped Langevin dynamics. Specifically, the evolution of the density—hence the score function—is approximated via a kernel representation derived from the regularized Wasserstein proximal operator. By applying the dual formulation and a localized Taylor series to obtain the asymptotic expansion of this kernel formula, we establish guaranteed convergence in terms of the Kullback–Leibler divergence for the BRWP method towards a strongly log-concave target distribution. Our analysis also identifies the optimal and maximum step sizes for convergence. Furthermore, we demonstrate that the deterministic and semi-implicit BRWP scheme outperforms many classical Langevin Monte Carlo methods, such as the Unadjusted Langevin Algorithm (ULA), by offering faster convergence and reduced bias. Numerical experiments further validate the convergence analysis of the BRWP method.
\end{abstract}

\keywords{Entropy dissipation; Regularized Wassertein proximal;  Score functions; Optimal time stepsize.}

\section{Introduction}
\label{sec_intro}
Sampling from complex and potentially high-dimensional distributions is increasingly crucial in data science \citep{andrieu2003introduction}, computational mathematics \citep{durmus2019high}, and engineering \citep{leimkuhler2015molecular}. Efficient sampling algorithms are central to numerous real-world applications, including identifying global optimizers for high-dimensional functions \citep{sampling_optimization_2019_MJ}, generating samples from latent spaces in generative modeling \citep{song2019generative}, and solving Bayesian inverse problems to estimate posterior distributions \citep{stuart2010inverse, stuart_Bayersian}. The success of these applications heavily relies on the efficiency, reliability, and theoretical convergence guarantees of the employed sampling algorithms.

Given the importance of sampling from known distributions, various algorithms have been developed and analyzed. Markov Chain Monte Carlo (MCMC) methods are widely used \citep{underdamped_LA,random_work_Metroplis,hit_and_run,carrillo2022consensus,HMC}. A well-known example is the overdamped Langevin dynamics, which relies on a gradient drift vector field and a simple diffusion process introduced by Gaussian noise. Theoretical results show that the continuous-time Langevin Monte Carlo (LMC) method can converge to a stationary distribution under appropriate assumptions \citep{otto2000generalization}.
In practice, however, challenges arise with LMC when discretizing the Langevin dynamics in time, particularly due to the dimension dependence caused by the random walk component and the bias introduced by the finite time stepsize. Notable discretizations include the Unadjusted Langevin Algorithm (ULA), which employs an explicit Euler update \citep{ULA_first}, the Metropolis-Adjusted Langevin Algorithm (MALA), which incorporates an acceptance–rejection step \citep{early_MALA}, the Proximal Langevin Algorithm, which uses implicit updates in the drift to handle nonsmooth potentials \citep{proximal_non_smooth}, \blue{and forward–backward splitting schemes formulated directly in the Wasserstein space \citep{salim_splitting, diao2023forward}.
While these implicit or semi-implicit methods can partially mitigate discretization-induced bias, Wasserstein splitting approaches typically require solving a variational subproblem at each iteration, leading to substantial computational overhead except in special cases where closed-form updates are available.}

Motivated by these limitations of LMC-based approaches, an alternative perspective is to reformulate the diffusion process in terms of the score function and consider the corresponding probability flow ODE.
In recent years, studies such as \citep{song2019generative, lu2022dpm,chen2024probability,huang2024convergence} have focused on this probability flow ODE derived from Langevin dynamics. This approach expresses the diffusion process, originally driven by Brownian motion, in terms of the score function—the gradient of the logarithm of the density function. The density itself satisfies the Fokker–Planck equation associated with overdamped Langevin dynamics. This reformulation also connects to the gradient flow of the Kullback–Leibler (KL) divergence in the Wasserstein space, the space of probability densities endowed with the Wasserstein-2 metric \citep{otto2000generalization}. In this context, the score function represents the Wasserstein gradient of the negative Boltzmann–Shannon entropy. From a numerical standpoint, algorithms that explicitly utilize the score often achieve improved stability and accuracy \citep{lu2022dpm, zhang2022fast,zhao2024unipc,accelerated_flow}.

Despite this advantage, applying the probability flow ODE to sampling introduces several significant challenges. First, the computation or approximation of the score function can be both computationally intensive and inaccurate when only finite samples are available. Second, the choice of time discretization is crucial: while a stable implicit scheme is desirable, it is often difficult to implement.
The time-implicit Jordan–Kinderlehrer–Otto (JKO) \citep{JKO} scheme addresses this issue by iteratively solving the proximal operator of the KL divergence in Wasserstein space. However, the JKO update is computationally demanding due to the absence of a closed-form formula for the Wasserstein proximal operator, which typically requires solving an additional optimization problem at each time step.

To mitigate these difficulties, a regularization term can be added to the Wasserstein proximal operator, as demonstrated in \citep{kernel_proximal}, yielding a system that admits a closed-form solution via the Hopf–Cole transformation. This solution, known as the kernel formula, allows a semi-implicit time discretization of the evolution of the score function and forms the basis of the Backward Regularized Wasserstein Proximal (BRWP) scheme.
The BRWP scheme, initially introduced in \citep{BRWP_2023} using empirical particle approximations for the density, was later enhanced in \citep{TT_BRWP} through tensor-train representations to accommodate high-dimensional settings. Analysis in \citep{TT_BRWP, BRWP_2023} demonstrates that, for Gaussian target distributions, the BRWP scheme achieves faster convergence and exhibits reduced sensitivity to dimensionality in terms of mixing time. These results suggest its potential applicability to a broader class of non-Gaussian distributions, motivating the more general convergence analysis. 

In this work, we explore the convergence properties of the BRWP scheme for sampling from strongly log-concave distributions. By leveraging the dual expansion on the kernel formula, we show that the BRWP algorithm is a semi-implicit discretization of the probability flow ODE, where the score function is evaluated at the next time point and free from Brownian motion. This implicit approach enhances the robustness and stability of the sampling process. Moreover, since the probability flow ODE in the BRWP scheme is deterministic, we can conduct weak second-order numerical analysis, which allows us to obtain an optimal step size to achieve faster convergence, as demonstrated in our upcoming analysis.

We informally demonstrate the main results as follows. We aim to sample a target distribution $\rho^* :=\frac{1}{Z}\exp(-\beta V)$ with a given potential function $V:\R^d\rightarrow\R$, a normalization constant $Z=\int_{\R^d}\exp(-\beta V(x))dx<\infty$, a constant inverse temperature $\beta$, and an initial density $\rho_0$. 
In particular, in Section~\ref{sec_kernel_approx}, we show that the following kernel formula from the regularized Wasserstein proximal operator (RWPO)
\begin{equation}
\label{rho_T_BRWP1}
\K_V^h\rho(x)  =  \int_{\R^d} \frac{\exp\big[-\frac{\beta}{2} \big(V(x) + \frac{||x - y||_2^2}{2h}\big)\big]}{\int_{\R^d} \exp\big[-\frac{\beta}{2} \big(V(z) + \frac{||z - y||_2^2}{2h}\big)\big] dz} \rho_0(y) dy\,,
\end{equation}
provides a one-step time approximation of the Fokker-Planck equation: 
\begin{equation*}
    \frac{\partial \rho}{\partial t} = \nabla \cdot (\nabla V \rho) + \beta^{-1} \Delta \rho \,.
\end{equation*}
Here, the stepsize is given as $h>0$. 

\blue{\begin{theorem}(Informal, see Theorem~\ref{thm:PV-weak})
For fixed test function $\varphi\in C^{2,1}(U)$ and stepsize $h>0$, suppose $\rho_0$ satisfies the Fokker-Planck equation at time $t_0$. Then we have
\begin{equation}
\left|
\left\langle \varphi,\,
\K_V^h\rho_0- \rho_0 - h\frac{\partial\rho_0}{\partial t}\bigg|_{t=t_0} 
\right\rangle
\right|
\le C\,h^2\,\|\varphi\|_{C^{2,1}(U)}\,,
\end{equation}
where the constant depends on $V$ through its derivative up to order $3$ and the local domain $U$.
\end{theorem}}

With the help of the kernel formula \eqref{rho_T_BRWP1} and assuming that it can be computed accurately, we implement the following semi-implicit discretization  with the stepsize $h>0$ of the probability flow ODE to sample from $\rho^*$: 
\begin{equation}
x_{k+1} = x_k - h(\nabla V(x_k) + \beta^{-1} \nabla\log\K_V^h\rho_k(x_k))\,,
\end{equation}
where $x_k\sim\rho_k$ and $\K_V^h\rho_k$ provides $\mathcal{O}(h^2)$ weak approximation to $\rho_{k+1}$. 
Considering the evolution of the density function associated with $x_{k}$, denoted as $\rho_k$, in Section~\ref{sec_conv_KL}, we show the convergence guarantees of the proposed sampling method in terms of the KL divergence where $\KL(\rho_k\|\rho^*) = \int \rho_k \logks dx$. 
\begin{theorem} (Informal, see Theorem~\ref{Thm_KL_decay} and Assumptions~\ref{assump:Regularity} to \ref{assump:SO-weak}) Assume $\rho^*$ is strongly log-concave with constant $\alpha$. For a sufficient small constant $\delta>0$, the BRWP algorithm will achieve an error of \( \KL(\rho_k \| \rho^*) \leq \delta \), with
$$ k  =\mathcal{O}\left(\frac{|\ln\delta|}{2\alpha\sqrt{\delta}} \right),$$
 where the stepsize is chosen as \( h = \sqrt{\delta} \) when $\sqrt{\delta} < 2/(3\alpha)$.
\end{theorem}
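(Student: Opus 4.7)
The plan is to reduce the analysis to a one-step KL recursion of the form
\[
\KL(\rho_{k+1}\|\rho^*) \le (1 - 2\alpha h)\,\KL(\rho_k\|\rho^*) + C\, h\, T^2,
\]
and then balance the geometric contraction against the bias term. Unrolling this inequality yields $\KL(\rho_k\|\rho^*) \le (1-2\alpha h)^k \KL(\rho_0\|\rho^*) + C T^2/(2\alpha)$; choosing $T = h = \sqrt{\delta}$ pins the asymptotic bias at $\mathcal{O}(\delta)$, and requiring the transient part to fall below $\delta$ forces $2\alpha h k \gtrsim |\ln\delta|$, producing the advertised count $k = \mathcal{O}(|\ln\delta|/(2\alpha\sqrt{\delta}))$.

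To obtain the recursion, I would first combine the preceding one-step Laplace expansion $\rho_T = \rho_0 + T\,\partial_t\rho_0 + \mathcal{O}(T^2)$ with the Fokker--Planck identity $\partial_t\rho_0 = \nabla\cdot(\nabla V\rho_0 + \beta^{-1}\nabla\rho_0)$ to express $\nabla\log\rho_T(x_k) = \nabla\log\rho_k(x_k) + T\,\Phi(x_k) + \mathcal{O}(T^2)$ for an explicit remainder $\Phi$. Substituting this into $x_{k+1} = x_k - h(\nabla V + \beta^{-1}\nabla\log\rho_T)(x_k)$ realizes BRWP as a semi-implicit Euler step of the score-based probability flow ODE. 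Expanding $\KL(\rho_{k+1}\|\rho^*)$ via the pushforward/change-of-variables formula produces $\KL(\rho_k\|\rho^*) - h\,\FI(\rho_k\|\rho^*) + \mathcal{O}(h^2 + hT^2)$, where the higher-order contributions are absorbed using $\alpha$-strong convexity of $V$ and the Lipschitz regularity of $\nabla V$ inherited from strong log-concavity. The log-Sobolev inequality for $\rho^*$ (Bakry--\'Emery) then upgrades $-h\,\FI(\rho_k\|\rho^*)$ into $-2\alpha h\,\KL(\rho_k\|\rho^*)$, delivering the target recursion. The hypothesis $\sqrt{\delta}<2/(3\alpha)$ enters at this stage to keep $1 - 2\alpha h > 0$ and to remain within the validity regime of the Laplace asymptotics.

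The principal obstacle will be uniform-in-$k$ control of the $\mathcal{O}(T^2)$ remainder that the Laplace expansion leaves behind. In contrast to ULA, whose bias comes from Euler--Maruyama discretization of an SDE, the bias here originates from the kernel integral representation of $\rho_T$, so controlling it demands propagating regularity estimates on $\rho_k$ along the iteration — specifically uniform bounds on its second moments and on $\|\nabla^2\log\rho_k\|$ — so that third derivatives of $V$ and the curvature terms hidden in the Laplace remainder remain integrable against $\rho_k$. A related technicality is verifying that $\rho_k$ stays close enough to the strongly log-concave regime along the trajectory that the log-Sobolev constant $\alpha$ may be reused at every step; the smallness of $\delta$ is precisely what buys this persistence.
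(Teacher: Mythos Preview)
Your skeleton --- derive a one-step KL contraction with additive bias, unroll, then balance transient against bias --- is exactly the paper's. The genuine gap is in your accounting of the $\mathcal{O}(h^2)$ contribution. You write that the pushforward expansion yields
\[
\KL(\rho_{k+1}\|\rho^*) = \KL(\rho_k\|\rho^*) - h\beta^{-1}\FI(\rho_k\|\rho^*) + \mathcal{O}(h^2 + hT^2),
\]
and then propose to ``absorb'' the higher-order part via strong convexity. But if the $\mathcal{O}(h^2)$ piece survives as a constant-in-$k$ additive term $Ch^2$, unrolling gives a final bias $Ch^2/(2\alpha h)=Ch/(2\alpha)$, which with $h=\sqrt{\delta}$ is $\mathcal{O}(\sqrt{\delta})$ rather than $\mathcal{O}(\delta)$, and the iteration count degrades to $\mathcal{O}(|\ln\delta|/(2\alpha\delta))$. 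Your \emph{target} recursion with bias $ChT^2$ would work, but reaching it from the expansion you wrote is precisely the hard step, and ``absorbed using $\alpha$-strong convexity'' does not describe a mechanism that accomplishes it.

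What the paper actually does at this step is compute the $h^2$ terms explicitly rather than leave them as remainder: they are $\frac{h(h+2T)}{2}\int|\mathcal{D}_k^\beta(\log\tfrac{\rho_k}{\rho^*})|^2\rho_k$ and $\frac{h^2}{2}\beta^{-2}\int\langle\nabla\log\tfrac{\rho_k}{\rho^*},\nabla^2\log\tfrac{\rho_k}{\rho^*}\,\nabla\log\tfrac{\rho_k}{\rho^*}\rangle\rho_k$. Two identities for $\tfrac{d^2}{dt^2}\KL(\rho\|\rho^*)$ along the Fokker--Planck flow (the Bakry--\'Emery $\Gamma_2$ formula and a companion formula in terms of $\mathcal{D}^\beta$) are then used to rewrite these so that one portion is absorbed into the contraction factor, producing $[1-2\alpha h + 3\alpha^2 h^2]$ in place of $[1-2\alpha h]$, while the residual is $\frac{h^2}{2}\beta^{-2}\int\|\nabla\log(\rho_k/\rho^*)\|_2^4\,\rho_k$. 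A separate differential-inequality argument shows this fourth-order moment decays like $M_0 e^{-4\alpha hk}$ along the iteration, so after summation it contributes only $\mathcal{O}(h\,e^{-4\alpha hk})$ rather than a static bias. Your last paragraph gestures at propagating bounds on $\|\nabla^2\log\rho_k\|$, but the paper avoids pointwise Hessian control entirely by these integral identities; the quantity that must be tracked in $k$ is the fourth moment, not the Hessian. Finally, the constraint $h<2/(3\alpha)$ arises from requiring the corrected contraction factor $1-2\alpha h+3\alpha^2 h^2$ to stay below $1$, not from $1-2\alpha h>0$ as you state; the latter would give $h<1/(2\alpha)$.
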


The convergence of Langevin–dynamics–based sampling methods has been extensively studied under various assumptions; see, for example, \citet{balasubramanian2022towards,durmus2019high,MALA_2019_converge,chewi2024analysis}. 
In contrast to methods such as ULA, proximal Langevin, and other stochastic Langevin-type algorithms, our scheme—being entirely noise-free—achieves an improved sampling complexity under a suitable score oracle assumption, while incurring only an \( \mathcal{O}(\sqrt{\delta}) \) bias. 
We further outline in Section~\ref{sec_score_implementation} several practical scenarios in which this score oracle assumption can be guaranteed.

The deterministic structure of our method also allows for an explicit characterization of the KL decay, which in turn leads to a principled choice of an optimal stepsize. 
This differs from sampling algorithms derived from the probability flow ODE, whose convergence properties have been analyzed in \citep{gao2024convergence,chen2023improved,chen2022sampling}. 
Those approaches may suffer from instability, with errors potentially increasing over time, and typically assume accurate estimation of the score function at each iteration \citep{yang2022convergence}. 
In our setting, the score information is obtained through the RWPO kernel, and theoretical analysis in Section~\ref{sec_kernel_approx} with practical consideration in Section~\ref{sec_score_implementation} provide conditions under which the error in the score approximation can be controlled. 
Moreover, the potential growth of the approximation error at infinity is handled by establishing uniform \( C^{3,1} \) bounds on the discrete densities, leveraging the regularizing effect of the kernel formula. 
As a result, the optimal stepsize arises from balancing the KL contraction rate, available under strong convexity, with the weak truncation error of the RWPO kernel, as detailed in Section~\ref{sec_conv_KL}.

Finally, in Section~\ref{sec_NE}, we present numerical experiments comparing our method against ULA and proximal-type sampling algorithms, thereby validating our theoretical findings. We conclude with a brief discussion of several promising directions for future research.

\section{Review on Probability Flow ODE with Score Function, Regularized Wasserstein Proximal Operator, and BRWP Algorithm}
\label{sec_review}
In this section, we begin by outlining the sampling problem for a given target distribution and reviewing classical sampling algorithms derived from overdamped Langevin dynamics, along with their convergence analysis in Wasserstein space. We then discuss various sampling algorithms based on discrete-time approximations of the Fokker-Planck equation, including the ULA, the JKO scheme, and BRWP. To provide a comprehensive comparison, we summarize the assumptions required in the upcoming analysis, sampling complexity, and optimal stepsize for BRWP and several popular existing methods in Section~\ref{sec_BRWP_algo} and Table \ref{table_complexity}.

\subsection{Sampling problem}
We aim to generate samples \(x_{k,j}\in\mathbb{R}^d\), where \(k\in\mathbb{N}\) denotes the iteration index and \(j=1,\dots,N\) indexes the particles. The target distribution is known as a Gibbs measure\[
\rho^*(x) = \frac{1}{Z}\exp(-\beta V(x))\,,
\]
where $Z = \int_{\R^d} \exp(-\beta V(x)) dx < +\infty$ is the normalization constant, $\beta > 0$ is the inverse temperature, and $V $ represents the potential function. The collection $\{x_{0,j}\}_{j=1}^N$ is an arbitrary set of initial particles, and the objective of the sampling algorithm is to ensure that, as $k$ increases, the distribution of $\{x_{k,j}\}_{j=1}^N$ approximates the density function $\rho^*$. 

We first recall the classical overdamped Langevin dynamics to highlight the difficulties that the BRWP method resolves. The dynamic is described by a stochastic differential equation
\begin{equation}
\label{def:Langevin}
    dX_t = -\nabla V(X_t) dt + \sqrt{2\beta^{-1}} dB_t\,,
\end{equation}
where $B_t$ denotes the standard Brownian motion in $\R^d$. The density function of $X_t$ evolves according to the Kolmogorov forward equation, also named Fokker-Planck equation:
\begin{equation}
\label{eqn_FP}
\begin{split}
\frac{\partial \rho}{\partial t} =& \nabla \cdot (\rho\nabla V ) + \beta^{-1} \Delta \rho \\
=&\nabla\cdot(\rho\nabla V )+\beta^{-1}\nabla\cdot(\rho \nabla\log\rho)\\
=& \beta^{-1}\nabla\cdot\left(\rho\nabla \logs \right)\,,
\end{split}
\end{equation}
where the second equality is based on the fact that $\nabla\log\rho=\nabla\rho/{\rho}$. Clearly, $\rho^*$ is an equilibrium of the Fokker-Planck equation \eqref{eqn_FP}. Thus, $\rho^*$ is the unique invariant distribution of the Langevin dynamics.

A known fact is that the Fokker-Planck equation corresponds to the Wasserstein-2 gradient flow of the KL divergence 
\[
\mathrm{D}_{\mathrm{KL}}(\rho\|\rho^*) := \int \rho \log\frac{\rho}{\rho^*} dx\,. 
\]
This means that the Fokker-Planck equation \eqref{eqn_FP} can be formulated as 
\begin{equation*}
  \frac{\partial\rho}{\partial t}=\beta^{-1}\nabla\cdot\left(\rho\nabla \left(\frac{\delta}{\delta\rho}\KL(\rho\|\rho^*)\right)\right)\,, 
\end{equation*}
where $\frac{\delta}{\delta\rho}$ is the $L^2$ first variation with respect to the density function $\rho$. Furthermore, in the Wasserstein space, the squared norm of the gradient of the KL divergence, also known as the relative Fisher information, is written as
\[
\FI(\rho\|\rho^*) = \int \left\|\nabla \log\frac{\rho}{\rho^*}\right\|_2^2 \rho dx\,.
\]

From any initial distribution, the probability density of Langevin dynamics converges to the target distribution $\rho^*$ under mild conditions. Specifically, we have
\[
\frac{d}{dt} \KL(\rho \| \rho^*) = -\beta^{-1}\FI(\rho\| \rho^*)<0\,.
\]
Moreover, when $\rho^*$ satisfies the log-Sobolev inequality (LSI) with constant $\alpha$, we can establish the relationship between the KL divergence and Fisher information
\begin{equation}
\label{PL_ineq}
    \KL(\rho\|\rho^*) \leq \frac{1}{2\beta\alpha}\FI(\rho\|\rho^*)\,.
\end{equation}
This inequality can be viewed as the gradient-dominated condition, commonly known as the Polyak-Lojasiewicz (PL) inequality in the Wasserstein space. From now on, we assume that $\nabla^2 V\succeq \alpha I$, or $\rho^*$ is strongly log-concave, such that the LSI holds \citep{gross1975logarithmic,bakry2006diffusions}. 


Utilizing the PL inequality, one can show that the KL divergence converges exponentially fast along the Wasserstein gradient flow (see Lemma \ref{Lemma_KL_derivative} in the appendix for more details)
\[
\KL(\rho_t \| \rho^*) \leq \exp(-2\alpha t) \KL(\rho_0 \| \rho^*)\,,
\]
where $\rho_t(x) = \rho(t,x)$.

Although the continuous-time convergence is well understood, a numerical analysis of the particle trajectories arising from the discretized Langevin scheme shows that the method induces a systematic bias and has a dependence on the dimensionality of the problem. This issue arises due to the evolution of Brownian motions in approximating Langevin dynamics, which can slow down convergence in high-dimensional sampling problems, as its variance is proportional to the dimension. More details will be provided in the next subsection.

Given these difficulties, in this work, we instead consider a probability flow ODE, where the diffusion is generated by the score function:
\begin{equation}
\label{particle_evolution}
dX_t = -\nabla V(X_t) dt - \beta^{-1} \nabla \log \rho_{t}(X_t) dt\,.
\end{equation}
The trajectory of \eqref{particle_evolution} differs from the Langevin dynamics \eqref{def:Langevin}. However, since the drift field satisfies $\nabla\log \rho = \nabla \rho /\rho$, the Liouville equation coincides with the Fokker-Planck equation \eqref{eqn_FP}. Consequently, the corresponding probability flow continues to be the Wasserstein gradient flow for the KL divergence. In continuous time, the KL divergence still converges exponentially fast along the flow given by \eqref{particle_evolution}. 

Moreover, since the ODE \eqref{particle_evolution} is deterministic, it is feasible to perform higher-order numerical analysis under finite time stepsizes to explore the higher-order error terms introduced by discretization and the optimal stepsize associated with the discretization. This analysis will be the main focus of Section~\ref{sec_conv_KL}, where we demonstrate the improved convergence rate and accuracy of our scheme.

\subsection{Discrete time approximation}

Although the KL divergence converges exponentially fast in continuous time along the flow given by the Fokker-Planck equation, discretizing Langevin dynamics slows down convergence. It also introduces a bias term due to the discretization error. For example, applying explicit Euler discretization with stepsize $h$ to \eqref{def:Langevin} results in the ULA
\begin{equation}
\label{def_ULA}
    x_{k+1} = x_k - h\nabla V(x_k) + \sqrt{2\beta^{-1}h} z_k\,,
\end{equation}
where $z_k \sim \mathcal{N}(0,I)$.
The convergence of ULA is well studied, and \citep{ULA_convergence} shows
\[
\KL(\rho_k \| \rho^*) \leq \exp(-\alpha h k) \KL(\rho_0 \| \rho^*) + \frac{8 h d L^2}{\alpha}\,,
\]
where $\rho_k$ represents the density at time $t_k = kh$ and $-L I \preceq \nabla^2 V \preceq L I$. The discretization induces an error term of order $h$, which depends on the dimension $d$ that arises from the Brownian motion term. This makes high-dimensional cases particularly challenging and requires a small stepsize for the convergence.

Then we turn to study the discretization of the probability flow ODE \eqref{particle_evolution} and note that one of the primary challenges is the approximation of the score function. It becomes increasingly difficult and inaccurate in high-dimensional spaces. As a result, an explicit forward Euler discretization can substantially misestimate the transport field induced by $\nabla\log\rho$. This amplification of discretization error leads to numerical instability and may cause the particle system to collapse or diverge. Hence, an implicit discretization of \eqref{particle_evolution} becomes highly demanding. 

Implicit methods for solving Langevin dynamics have also been extensively studied, including proximal Langevin \citep{wibisono2019proximal}, proximal sampler with restricted Gaussian oracle \citep{proximal_non_smooth}, and implicit Langevin \citep{JMLR_Euler}. In particular, from the density level, the implicit method usually corresponds to a certain splitting \citep{bernton2018langevin} of the classical JKO scheme for the Fokker-Planck equation
\begin{equation}
\label{JKO}
   \rho_{k+1} = \argmin_{\rho\in \mathcal{P}_2(\R^d)}\,\,\beta^{-1}\KL(\rho\|\rho^*) + \frac{1}{2h}W_2(\rho,\rho_k)^2\,, 
\end{equation}
where $W_2(\rho,\rho_k)$ is the Wasserstein-2 distance between densities $\rho$ and $\rho_k$.

However, the aforementioned implicit implementation requires solving certain proximal operators or a system of nonlinear equations, which could be time-consuming and difficult for general families of density functions. 
In light of this, a semi-implicit Euler discretization method was proposed in \citep{BRWP_2023, TT_BRWP} for the particle evolution equation \eqref{particle_evolution} at time $t_k$:
\begin{equation}
\label{Euler_backward}
x_{k+1} = x_k - h\left(\nabla V(x_k) + \beta^{-1} \nabla \log \rho_{k+1}(x_k)\right)\,.
\end{equation}
In other words, we only evaluate the score function in the next time step.
 
\subsection{Regularized Wasserstein proximal operator and kernel formula}
Concerning \eqref{Euler_backward}, one may note that approximating the terminal density  \(\rho_{k+1} := \rho(t_k+h,\cdot)\) is challenging due to its nonlinear dependence on the initial density and the high-dimensional nature of \(\rho\). A natural approach for discretizing the Fokker-Planck equation is the classical JKO scheme \eqref{JKO}, but each iteration requires solving a Wasserstein proximal operator, which in general involves an expensive optimization problem.

To address this difficulty, we recall that the JKO update can be interpreted as a Wasserstein proximal operator associated with the KL divergence. This perspective suggests that, if a tractable surrogate for this proximal operator were available, one could design a stable and semi-implicit discretization of the density evolution. Motivated by this idea, we consider a {regularized} Wasserstein proximal operator \citep{kernel_proximal}, which admits a closed-form solution and can therefore be used to approximate the Fokker-Planck dynamics with a small stepsize.

Let $\rho_0$ be the initial density, we first recall the Wasserstein proximal operator associated with a linear energy functional:
\begin{equation}
\label{Wass_linear_energy}
   \arg\min_{q}
   \bigg[
      \frac{1}{2h} W_2(\rho_0, q)^2
      + \int_{\R^d} V(x)\, q(x)\, dx
   \bigg] .
\end{equation}
Note that this differs from the Wasserstein proximal operator with KL divergence used in the JKO scheme; this connection will become clear once a Laplacian regularization is added.
 
By the Benamou-Brenier formula \citep{BB_formula}, the Wasserstein-2 distance can be written as an optimal transport problem:
\[
\frac{W_2(\rho_0, q)^2}{2h}
   = \inf_{\rho,\,v}
      \int_0^h \int_{\R^d}
      \frac12 \|v(t,x)\|_2^2\, \rho(t,x)\, dx\, dt\, ,
\]
where the minimization is taken over vector fields
\(v : [0,h] \times \R^d \to \R^d\) and densities
\(\rho : [0,h] \times \R^d \to \R\) satisfying
\[
\frac{\partial \rho}{\partial t}
   + \nabla \cdot (\rho v) = 0\,,\qquad
\rho(0,x) = \rho_0(x)\,,\qquad
\rho(h,x) = q(x)\,.
\]

Solving \eqref{Wass_linear_energy} directly is typically a challenging variational problem. In \citep{kernel_proximal}, motivated by Schr\"odinger bridge systems, a regularized Wasserstein proximal operator is introduced by adding a Laplacian term to the dynamics, leading to
\begin{equation}
\label{Reg_Was}
   \K_V^h\rho_0
   := \argmin_{q}
      \inf_{v,\rho}
      \int_0^h \int_{\R^d}
         \frac12 \|v(t,x)\|_2^2\, \rho(t,x)\, dx\, dt
      + \int_{\R^d} V(x)\, q(x)\, dx ,
\end{equation}
subject to the modified continuity equation
\begin{equation}
   \frac{\partial \rho}{\partial t}
      + \nabla \cdot (\rho v)
      = \beta^{-1}\Delta \rho,\qquad
   \rho(0,x)=\rho_0(x),\qquad
   \rho(h,x)=q(x).
\end{equation}
We remark that this regularization is the key step: it enables a closed-form solution via the Hopf-Cole transformation, leading to the kernel formula that forms the basis of the BRWP scheme.
 
Introducing a Lagrange multiplier function $\Phi$, we find that solving $\K_V^h\rho_0$ is equivalent to computing the solution of the coupled PDEs
\begin{align}
\label{regu_PDE}
\begin{cases}
    &\partial_t \rho + \nabla_x\cdot (\rho \nabla_x\Phi) = \beta^{-1} \Delta_x\rho\,, \\
    &\partial_t \Phi + \frac{1}{2}||\nabla_x\Phi||_2^2 = -\beta^{-1} \Delta_x\Phi\,, \\
    &\rho(0,x) = \rho_0(x)\,, \quad \Phi(h,x) = -V(x)\,.
\end{cases}
\end{align}
Comparing the first equation in \eqref{regu_PDE} with the Fokker-Planck equation defined in \eqref{eqn_FP}, we observe that the solution to the regularized Wasserstein proximal operator \eqref{Reg_Was} approximates the terminal density \(\rho\) when \(h\) is small. In other words, by solving the coupled PDE \eqref{regu_PDE}, we can approximate the evolution of the Fokker-Planck equation. This approximation will be justified rigorously in Section \ref{sec_kernel_approx}.

\blue{
The primary motivation for considering the regularized Wasserstein proximal operator as an approximate solution lies in the fact that the coupled PDEs \eqref{regu_PDE} can be solved using a Hopf-Cole type transformation \citep[Sec.~4.4]{evans2022partial} by taking $\eta = \exp(\beta/2\Phi)$ and $\hat\eta = \rho/\eta$. Then \eqref{regu_PDE} is equivalent to a system of backward-forward heat equations 
\begin{equation}
\label{Heat_rho}
    \rho(t,x) = \eta(t,x)\hat{\eta}(t,x)\,,
\end{equation}
where $\hat{\eta}$ and $\eta$ satisfy
\begin{align}
&\partial_t\hat{\eta}(t,x) = \beta^{-1} \Delta_x\hat{\eta}(t,x)\,,  \notag\\
&\partial_t\eta(t,x) = -\beta^{-1} \Delta_x\eta(t,x)\,, \label{forward_heat}\\
&\eta(0,x)\hat{\eta}(0,x) = \rho_0(x), \quad \eta(h,x) = \exp\left(-\beta/2V(x)\right)\notag\,.
\end{align}

The system \eqref{Heat_rho} can be explicitly solved using the heat kernel
\[
G_h(x-y) = \frac{1}{(4\pi h/\beta)^{d/2}}\exp\Big(-\beta\frac{\|x-y\|_2^2}{4h}\Big)\,,
\]
corresponding to a diffusion process with thermal diffusivity~$\beta$. For instance, the forward component can be written as
\[
\eta(t,x)
= \int_{\R^d} G_h(x-y)e^{-\frac{\beta}{2}V(y)}dy,
\]
and an analogous expression holds for $\hat{\eta}$. Through the coupling relation in~\eqref{Heat_rho}, the final solution to the regularized Wasserstein proximal operator~\eqref{Reg_Was} can then be obtained in closed form:
\[
\begin{cases}
    \rho(t, x) = \left(G_{h - t} * \exp(-\frac{V}{2 \beta})\right)(x) \cdot \left(G_h * \frac{\rho_0}{G_h * \exp(-\frac{V}{2 \beta})}\right)(x)\,, \\
    v(t, x) = 2 \beta^{-1} \nabla \log\left(G_{h - t} * \exp(-\frac{V}{2 \beta})\right)(x)\,, \\
    \rho(h,x) = \exp\left(-\frac{V(x)}{2 \beta}\right)   \cdot \left(G_h * \frac{\rho_0}{G_h * \exp(-\frac{V}{2 \beta})}\right)(x)\,.
\end{cases}
\]
In particular, we focus on the terminal density \(\rho(h,x)\), which serves as an approximation to the solution of the Fokker-Planck equation at time \(h\) with initial density \(\rho_0\). For this purpose, we write down the explicit kernel formula of \(\rho(h,x)\), in which the normalization constant of the heat kernel is omitted, as it cancels out and plays no role in the subsequent analysis
\begin{equation}
\label{rho_T_BRWP}
 \K_V^h(\rho_0)(x): =\rho(h,x)=  \int_{\R^d} \frac{\exp\big[-\frac{\beta}{2} \big(V(x) + \frac{||x - y||_2^2}{2h}\big)\big]}{\int_{\R^d} \exp\big[-\frac{\beta}{2} \big(V(z) + \frac{||z - y||_2^2}{2h}\big)\big] dz} \rho_0(y) dy\,.
\end{equation}}
 
\subsection{BRWP algorithm}
\label{sec_BRWP_algo}
\blue{With the aid of \eqref{rho_T_BRWP}, we approximate $\rho_{k+1}$ in \eqref{Euler_backward} by $\K_V^h \rho_k$.
This approximation leads to the BRWP sampling algorithm, which is derived from a semi-implicit discretization of the probability flow ODE and whose key advantage is that it admits a closed-form update, in contrast to other proximal-based Langevin algorithms.}

\begin{algorithm}[H]
\caption{BRWP for Sampling from $\rho^*= \frac{1}{Z}\exp(-\beta V)$}
\label{algo_BRWP}
\begin{algorithmic}[1]
\State \textbf{Input:} Initial particle set $\{x_{0,j}\}_{j=1}^N$
 
\For{$k = 1, 2, 3, \dots$}
    \State Given $\rho_k$ as the density associated with $\{x_{k,j}\}$\footnote{In practice, this is approximated by the empirical measure.}
    \For{each particle $j = 1, 2, \dots, N$}
        \State Update particle:
        \[
        x_{k+1,j} = x_{k,j} - h\left(\nabla V(x_{k,j}) + \beta^{-1} \nabla \log \widetilde\rho_{k+1}(x_{k,j})\right)
        \]
        \blue{with $\widetilde\rho_{k+1} = \K_V^h(\rho_k)$ defined in \eqref{rho_T_BRWP} that approximate $\rho_{k+1}$.}
    \EndFor
\EndFor
\end{algorithmic}
\end{algorithm}

\blue{The goal of this work is to establish results on the sampling complexity and maximum stepsize $h$ required for the convergence of the interacting particle system generated by Algorithm~\ref{algo_BRWP} under the following assumptions.
\begin{assumption}
\label{assump:Regularity}
\textbf{(Regularity)}  
The potential \(V\in C_{\mathrm{loc}}^{4,1}(\R^d)\) is \(\alpha\)–strongly convex and $L$-smooth; that is,
\[
\alpha I_d \preceq \nabla^2 V(x) \preceq L I_d
\quad \text{for some } \alpha, L >0\,,\quad \forall x \in \R^d\,.
\]
\end{assumption}

\begin{assumption}
\label{assump:growth}
\textbf{(Polynomial growth)}  
There exist constants \(a_1,a_2>0\) and \(m\ge1\) such that
\[
V(x)\ge a_1\|x\|^m - a_2.
\]
Moreover, there exist constants \(C_V,q_V\ge0\) such that, for all multi-indices \(|\lambda|\le3\),
\[
|\partial^\lambda V(x)| \le C_V (1+\|x\|)^{q_V}.
\]
\end{assumption}

\begin{assumption}
\label{assump:moment}
\textbf{(Finite  moment and regularity of initial density)}  
The initial density $\rho_0 \in \mathcal P_2(\R^d)$ has a finite $p$-th moment for some $p>d+q_V+3$, belongs to $C^{3,1}(U)$, and is strictly positive on a neighborhood of a bounded set $U$ (see Section~\ref{sec_kernel_approx} for the precise specification of $U$).
\end{assumption}

Assumption~\ref{assump:Regularity} provides the smoothness of \(V\) required to justify the second- and third-order Taylor expansions that underlie our local weak error analysis.  We remark that, to establish the second-order weak accuracy of the kernel formula in Lemma~\ref{lem:PV-weak-local}, it would suffice to assume \(V\in C^{3,1}(U)\).  
However, computing the score function $\nabla\log\K_V^h\rho_k$ requires differentiating \(\K_V^h\rho_k\) once more, which in turn takes an additional derivative of \(V\). This is the reason for the stronger regularity requirement adopted in the present work.
The convexity condition further guarantees the validity of the log-Sobolev inequality~\eqref{PL_ineq}, which yields exponential convergence of the corresponding Fokker-Planck flow toward equilibrium.

Assumption~\ref{assump:growth} ensures that $V$ and its derivatives exhibit at most polynomial growth. This condition, along with uniform moment control in Lemma~\ref{lem:moment-p-discrete}, justifies the reduction from an unbounded to a bounded domain, as employed in Theorem~\ref{thm:PV-weak}.  
Under these assumptions, the kernel operator $\K_V^h$ is well defined and yields the desired approximation properties.  
Finally, Assumption~\ref{assump:moment} guarantees the well-posedness of the iterative scheme and the associated score approximation, starting from a properly defined initial state in the proof of Section~\ref{sec_C31BRWP}.}

Numerical experiments (see Section~\ref{sec_NE}) further demonstrate that the proposed algorithms remain robust beyond these idealized conditions, performing effectively even for nonconvex or multimodal target distributions.  
Our proof in Section~\ref{sec_kernel_approx} will rely primarily on Assumptions~\ref{assump:Regularity}-\ref{assump:moment}, where we establish that \(\K_V^h(\rho_k) \approx \rho_{k+1}\) in the weak sense.  

We first summarize the theoretical results we will prove in Section~\ref{sec_conv_KL} in Table \ref{table_complexity}, where we compare BRWP with other popular methods. Sampling complexity refers to the number of iterations needed to \blue{achieve a given level} of accuracy.  

\bgroup
\def\arraystretch{2}
\begin{table}[ht!]\footnotesize
\centering
\begin{tabular}{|c|c|c|}
\hline
\textbf{Algorithm} & \makecell{\textbf{Number of iterations required to achieve} \\ $\KL(\rho_k \| \rho^*) \leq \delta$} & \textbf{Maximum stepsize \( h \)} \\ \hline
BRWP (this paper) & $\mathcal{O}\left(\frac{|\ln\delta|}{2\alpha\delta^{1/2}}\right)$ & $\frac{2}{3\alpha}$\textsuperscript{1} \\ \hline
ULA (\citep{ULA_convergence}) & $\mathcal{O}\left(\frac{dL^2}{\alpha^2 \delta}\right)$ & $\frac{\alpha}{4L^2}$ \\ \hline
Underdamped Langevin (\citep{underdamped_LA}) & $\mathcal{O}\left(\frac{d^{1/2}(L^{3/2} + d^{1/2}K)}{\alpha^2 \delta^{1/2}}\right)$ & $\mathcal{O}\left(\frac{\alpha}{L^{3/2}}\right)$ \\ \hline
Proximal Langevin (\citep{wibisono2019proximal}) & $\mathcal{O}\left(\frac{d^{1/2}(L^{3/2} + dK)}{\alpha^{3/2} \delta^{1/2}}\right)$ & $\min\left\{\frac{1}{8L}, \frac{1}{K}, \frac{3\alpha}{32L^2}\right\}$ \\ \hline
\end{tabular}
\caption{\blue{Iteration complexities for sampling algorithms under Assumptions~\ref{assump:Regularity}-\ref{assump:moment} in Section~\ref{sec_BRWP_algo} and the score-oracle Assumption~\ref{assump:SO-weak} in Section~\ref{sec_conv_KL}.}}
\label{table_complexity}
\end{table}

\footnotetext[1]{\footnotesize
For BRWP, the stepsize is additionally restricted by a positive number $h_0$ ensuring the regularity of $\rho_k$ (Section~\ref{sec_kernel_approx}); thus, the effective maximal stepsize is $\min\{h_0,\,2/(3\alpha)\}$.}

In the next section, we shall focus on investigating the approximate density given by the kernel formula \eqref{rho_T_BRWP} and the regularity property of the discrete density from the BRWP iteration.

 
\blue{\section{Kernel Approximation and Regularity of the BRWP Update}
\label{sec_kernel_approx}
In this section, we mainly study and utilize the properties of the kernel formula \(\K_V^h\) underlying the BRWP update. We show that \(\K_V^h\) yields a weak \(\mathcal O(h^2)\) approximation of the Fokker-Planck evolution and that the densities generated by the BRWP iteration remain uniformly locally regular and integrable. All approximation results are understood in a weak, local sense; global estimates follow from suitable moment bounds and localization arguments.

The motivation is the following. The semi-implicit BRWP scheme requires, at each step, an approximation of the next-step score function $\nabla\log\rho_{k+1}$ via the kernel-based quantity $\nabla\log(\K_V^h\rho_k)$. To control the global error of the BRWP algorithm, it is therefore essential to establish two properties:  
(i)~the kernel formula $\K_V^h$ approximates the Fokker-Planck flow with second-order weak accuracy, and  
(ii)~the resulting densities retain sufficient regularity so that the score evaluation remains well posed throughout the iteration.

The key observation is that $\K_V^h$ admits a short-time expansion whose leading-order terms coincide with the generator of the Fokker-Planck equation. Under Assumption~\ref{assump:Regularity}, this yields a second-order weak Taylor expansion: when tested against smooth functions, the discrepancy between $\K_V^h\rho$ and the exact Fokker-Planck evolution is of order $\mathcal O(h^2)$. This expansion forms the basis of our weak error analysis.

At the same time, Assumptions~\ref{assump:growth}–\ref{assump:moment} guarantee stability of the BRWP update. The Gaussian structure of $\K_V^h$ induces a localized smoothing effect that propagates uniform $C^{3,1}$ regularity on bounded sets, while the polynomial growth conditions on $V$ ensure that densities remain integrable along the iteration. As a consequence, the BRWP densities stay uniformly locally regular and bounded at every step.

Taken together, these results demonstrate that $\K_V^h$ serves as both an accurate local approximation of the Fokker-Planck flow and a stable update mechanism for the approximate score function in the BRWP scheme, thereby providing the analytical foundation for the convergence analysis presented in the subsequent sections.

We begin by introducing the notation and operators used throughout this section. For a smooth test function \(\varphi:\R^d\to\R\), the Langevin generator and its adjoint are
\begin{equation}
\label{def:L}
(\mathcal L\varphi)(x)
=\beta^{-1}\Delta\varphi(x)-\nabla V(x)\!\cdot\!\nabla\varphi(x)\,,
\qquad
(\mathcal L^{\!*}\rho)(x)
=\beta^{-1}\Delta\rho(x)+\nabla\!\cdot(\rho\nabla V(x))\,.
\end{equation}

Define
\[
\psi(x):=e^{-\frac{\beta}{2}V(x)}\,, \qquad 
\rho^*(x) = \frac{1}{Z}\psi(x)^2\,.
\]
The introduction of \(\psi\) is motivated by the weight that appears in \(\K_V^h\) in \eqref{rho_T_BRWP}. Let \(\G_h\) denote the Gaussian heat semigroup,
\[
(\G_h f)(y)=\int_{\R^d}G_h(x-y)f(x)\,dx\,,
\]
and write \(Z:=\G_h\psi\).  
Then, for test functions $\varphi$ and densities $\rho$, the dual kernel operator takes the form
\begin{equation}\label{eq:Doob}
P_V^h\varphi(y)
:=\frac{\G_h(\psi\varphi)(y)}{Z(y)}\,,
\qquad
\langle\varphi,\K_V^h\rho\rangle=\langle P_V^h\varphi,\rho\rangle\,,
\end{equation}
where \(\langle u,v\rangle=\int_{\R^d}u(x)v(x)\,dx\,.\) With the above notation, the kernel formula can be written as
\begin{equation}\label{eq:KVT-kernel}
\K_V^h f(x)
=\psi(x)\int_{\R^d}\frac{G_h(x-y)f(y)}{Z(y)}\,dy\,.
\end{equation}

Equivalently, \(P_V^h\) admits the Gibbs-type representation
\begin{equation}\label{eq:PVT-prob}
P_V^h\varphi(y)=\E_{\mu_y}[\varphi(X)]\,,
\end{equation}
where
\[
U_y(x):=\tfrac{\beta}{2}V(x)+\tfrac{\beta}{4h}\|x-y\|^2\,,
\qquad
\mu_y(dx)=\frac{e^{-U_y(x)}}{\int_{\R^d}e^{-U_y(x)}\,dx}\,dx\,.
\]

We will compare three families of densities, all starting from the same initial data \(\rho_0\):
\begin{itemize}
\item \(\rho(t_k)\): the exact Fokker-Planck solution at time \(t_k\)\,;
\item \(\widehat\rho_k\): the kernel-propagated densities  \(\widehat\rho_{k+1} = \K_V^h\widehat\rho_k\)\,;
\item \(\rho_k\): the BRWP iterates in the Algorithm~\ref{algo_BRWP} which can be written equivalently as
  \[
  \rho_{k+1}=(F_k)_\#\rho_k\,, \qquad
  F_k(x)=x-h\big(\nabla V(x)+\beta^{-1}\nabla\log(\K_V^h\rho_k)(x)\big)\,.
  \]
\end{itemize}

For a finite signed measure \(\mu\) supported in a bounded set \(U\subset\R^d\), define 
\begin{align}\label{def:dual-C11}
\|\mu\|_{(C^{1,1}(U))^*}
&:=\sup_{\|\varphi\|_{C^{1,1}(U)}\le1}|\langle\varphi,\mu\rangle|\,,
&
\|\varphi\|_{C^{1,1}(U)}
&:=\|\varphi\|_{L^{\infty}(U)}
   +\|\nabla\varphi\|_{L^{\infty}(U)}
   +[\nabla\varphi]_{\mathrm{Lip}(U)}\,, \\[2mm]
\|\mu\|_{(C_0^{1,1}(U))^*}
&:=\sup_{\|\varphi\|_{C_0^{1,1}(U)}\le1}|\langle\varphi,\mu\rangle|\,,
&
\|\varphi\|_{C_0^{1,1}(U)}
&:=\|\nabla\varphi\|_{L^{\infty}(U)}+[\nabla\varphi]_{\mathrm{Lip}(U)}\,.
\end{align}
Higher-order norms \(\|\cdot\|_{C^{m,1}(U)}\) are defined analogously.  The dual norm above appears naturally in the weak formulation that we will work with in this section. Moreover, we write the weighted $p$ moment as
\[
M_p(\mu):=\int_{\R^d}(1+\|y\|)^p\,|\mu(y)|\,dy\,.
\]

For the following analysis, we fix bounded open sets with smooth boundaries and define
\[
U \Subset U_r \subset \R^d\,,\qquad
r_0 := \dist(U,\partial U_r) > 0\,.
\]
The set \(U\) is the region on which all local expansions and regularity estimates are performed, while \(U_r\) serves as a slightly enlarged localization domain that contains all near–field interactions generated by the kernel for sufficiently small step sizes. Unless otherwise specified, all \(C^{m,1}\)-norms are taken on \(U\). 
All constants appearing below may depend on \(d,\beta,\alpha\), the domains \(U\Subset U_r\), and \(\|V\|_{C^{3,1}(U_r)}\), but are uniform in the step size \(h\) and the iteration index \(k\).The choice of \(U\) depends on \(\beta\) and the maximum admissible stepsize. Further details on the specification of \(U\) are provided in the remark following Theorem~\ref{thm:PV-weak}.

This localization allows us to cleanly separate near–field and far–field contributions in the kernel expansions. Near–field interactions, corresponding to \(\|x-y\|\le r_0\), remain inside \(U_r\) and are handled by local Taylor expansions and regularity estimates. Far–field contributions are controlled using the Gaussian decay of the kernel, together with the polynomial growth condition on \(V\) (Assumption~\ref{assump:growth}) and uniform moment bounds.
This structure is essential for establishing uniform local \(C^{3,1}\) bounds on the discrete densities and for proving the \(\mathcal O(h^2)\) weak error estimates.  
Since \(U_r\) is a fixed localization domain satisfying \(U\Subset U_r\), we will often state estimates only on \(U\), with the understanding that all constants and expansions implicitly depend on the ambient domain \(U_r\).

The detailed proof for results in this section can be found in the Appendix~\ref{sec_appendix_kernel}.

\subsection{Weak second-order kernel approximation of the Fokker-Planck flow}
Firstly, we establish that the kernel formula provides a weak approximation of the Fokker–Planck evolution on a compact set.
The proof is based on successive Taylor expansions of smooth functions, combined with integration by parts and moment estimates for the Gaussian kernel.

\begin{lemma}[Local weak $O(h^2)$ expansion]
\label{lem:PV-weak-local}
For $\varphi \in C^{2,1}(U)$ supported in $U$, there exists $h_0>0$ such that for all $h\in(0,h_0]$,
\begin{equation}\label{eq:local-PV-weak}
\big|\langle \varphi,\;\K_V^hu-u-h\mathcal{L}^*u\rangle\big|=\big|\langle P_V^h\varphi-\varphi-h\mathcal L\varphi,\;u\rangle\big| 
\le
C\,h^2\,\|\varphi\|_{C^{2,1}(U)}\,\|u\|_{C^{0,1}(U)} \,,
\end{equation}
for some constant $C=C(d,\beta,U_r,\|V\|_{C^{3,1}(U_r)})$\,.
\end{lemma}
\begin{proof}[Sketch of proof]
We only indicate the main ideas and refer to the appendix for full details.  
For $x\in\R^d$, write $\G_h[f(x,y)] = \int_{\R^d}G_h(y)f(x,y)dy$ where we integrate with respect to $y$, then
\begin{equation}
P_V^h\varphi(x)
=\varphi(x)
+\frac{N(x)}{Z(x)} \,,
\qquad
N(x):=\G_h\!\big[\psi(x+y)\,(\varphi(x+y)-\varphi(x))\big],\quad
Z(x):=\G_h[\psi(x+y)]\,.
\end{equation}

Choose
\[
r_h:=\min\Bigl\{\frac{r_0}{2},\,\sqrt{\tfrac{12}{\beta}h\log(1/h)}\Bigr\}\,.
\]
Standard Gaussian tail estimates imply $
\G_h\big[\mathbf 1_{\{\|y\|>r_h\}}\big]\le C h^2$ uniformly in $x\in U$.  
Consequently, for any bounded integrand and any
$u\in C^{0,1}(U)$, the contribution of the region $\{\|y\|>r_h\}$ to the weak pairing is of order $\mathcal O(h^2)$.
Since $r_h \le r_0/2$, we have $x+B(0,r_h)\subset U_r$ for all $x\in U$.  
Therefore, up to an $\mathcal O(h^2)$ error in the weak sense, we may restrict the $y$–integration to the ball $B(0,r_h)$, on which all local Taylor expansions are valid.

For $\|y\|\le r_h$, expand
\[
\psi(x+y)
=\psi(x)+\nabla\psi(x)\!\cdot y+\tfrac12 y^\top\nabla^2\psi(x)y
+\tfrac16 D^3\psi(x)[y,y,y]+R_\psi(x,y)\,,
\]
with $|R_\psi(x,y)|\le C\|y\|^4$.

Using
\[
\G_h[y]=0\,,\qquad
\G_h[yy^\top]=2h\beta^{-1}I_d\,,\qquad
\G_h[y_i y_j y_k]=0\,,\qquad
\G_h[\|y\|^4]=\mathcal O(h^2)\,,
\]
together with Stein's identity $\G_h[y\!\cdot\!f(y)] =2h\beta^{-1}\G_h[\nabla_y\!\cdot f(y)]$, all cubic terms vanish and only the even-order terms survive.  
This yields
\[
Z(x)=\psi(x)+h\beta^{-1}\Delta\psi(x)
+\mathcal O\big(h^2\|\psi\|_{C^{3,1}(U_r)}\big)\,,
\]
uniformly on $U$.
Insert the Taylor expansions of $\psi(x+y)$ and $\varphi(x+y)-\varphi(x)$ into $N(x)$ and keep only terms contributing up to order $h$.  
Using the Gaussian identities above and $\psi=e^{-\beta V/2}$, one obtains
\[
\left\langle \frac{N(x)}{Z(x)},\,u\right\rangle = h\,\langle \beta^{-1}\Delta\varphi-\nabla V\!\cdot\nabla\varphi,\;u\rangle
+\mathcal O\!\big(h^2\|\varphi\|_{C^{2,1}(U)}\|u\|_{C^{0,1}(U)}\big)\,.
\]

To obtain the above $\mathcal O(h^2)$ weak remainder, one needs four total derivatives on the integrand:  the kernel expansion produces up to third-order derivatives of $\varphi$ via Stein’s identity, while one derivative comes from integrating by parts against $u$. The intermediate orders $h^{1/2}$ and $h^{3/2}$ vanish because all odd Gaussian moments are zero.

This proves the claimed local weak $O(h^2)$ expansion.
\end{proof}

Subsequently, we extend the approximation in Lemma~\ref{lem:PV-weak-local} to globally supported test functions by imposing suitable polynomial moment assumptions. These moment bounds, together with Gaussian tail estimates, enable a truncation of the far–field contribution and control of the resulting error.

To carry this out, we employ a standard localization argument: a smooth cutoff $\chi$ is introduced to decompose $\varphi$ and $u$ into compactly supported inner parts, on which the local expansion on $U_{r}$ applies directly, and outer parts supported in $U^{c}$. 
The near–field interactions between inner and outer regions remain inside $U_{r}$ when $h$ is small, and thus also satisfy the local $O(h^{2})$ estimate. 
The remaining far–field interactions, with $\|x-y\|>r_{h}$, are then controlled using the moment bound $M_{p}$ and the rapid Gaussian decay of the kernel, which together imply that the far region contributes only $O(h^{2})$. Combining these bounds yields the global weak expansion in the following theorem.

\begin{theorem}[Global weak $O(h^2)$ expansion with unbounded supports]
\label{thm:PV-weak}
Let $\varphi\in C^{2,1}_{\mathrm{loc}}(\R^d)$ and $u\in C^{0,1}_{\mathrm{loc}}(\R^d)$
have unbounded support and satisfy the weighted moment bound
\[
M_p := \int_{\R^d} (1+\|x\|)^p\big(|u(x)|+|\varphi(x)|\big)\,dx < \infty\,,
\qquad p>2+q_V\,.
\]
Then there exists $h_0>0$ and a constant
\[
C=C(d,\beta,C_V,q_V,p,M_p)<\infty
\]
such that, for any fixed bounded open set $U\subset\R^d$ and all $h\in(0,h_0]$,
\begin{equation}\label{eq:global-PV-weak-fixedU-final}
\big|\langle \varphi,\;\K_V^hu-u-h\mathcal{L}^*u\rangle\big|=\Big|
\langle P_V^h\varphi-\varphi-h\mathcal L\varphi,\;u\rangle
\Big|
\le
C\,h^2\Big(
\|\varphi\|_{C^{2,1}(U)}\,\|u\|_{C^{0,1}(U)}+1
\Big)\,.
\end{equation}
\end{theorem}

The dependence of the step-size restriction $h_0$ on the bounded set $U$ enters only through the Gaussian tail estimates used in the proofs of
Lemma~\ref{lem:PV-weak-local} and Theorem~\ref{thm:PV-weak}. We illustrate this dependence explicitly.
Fix $R>0$ and assume that
\[
U \subset B(0,R) \subset B(0,2R) \subset U_r\,.
\]
For any $x\in U$, we have $\|x\|\le R$, and therefore
$\|x-y\|\ge R$ for all $y\in U_r^{\,c}$.
It follows that
\[
\int_{U_r^{\,c}} G_h(x-y)\,dy
\;\le\;
\int_{\{\|z\|\ge R\}} G_h(z)\,dz
\;\le\;
C\,\exp\!\Big(-\frac{\beta R^{2}}{4h}\Big)\,,
\]
where the last inequality is the standard Gaussian tail bound.
The estimate holds uniformly for all $x\in U$.
Consequently, there exists $h_0(R)\in(0,1]$ such that
\[
C\,\exp\!\Big(-\frac{\beta R^{2}}{4h}\Big)\le h^{2}\,,
\qquad 0<h\le h_0(R)\,,
\]
which ensures that
$ \int_{U_r^{\,c}} G_h(x-y)\,dy \;\le\; h^2
$ uniformly for $x\in U$. This shows that the contribution from $U_r^{\,c}$ is of order $\mathcal O(h^2)$ and justifies the truncation used in the weak expansion.

Moreover, through one more step of integration by parts, it is straightforward to show 
\begin{equation}
\big|\langle \varphi,\;\K_V^hu-u-h\mathcal{L}^*u\rangle\big| \le C\,h^2\,(\|\varphi\|_{C^{1,1}(U)}\,\|u\|_{C^{1,1}(U)}+1)\,,
\end{equation}
or
\begin{equation}
\big|\langle \varphi,\;\K_V^hu-u-h\mathcal{L}^*u\rangle\big| \le C\,h^2(\,\|\varphi\|_{C^{0,1}(U)}\,\|u\|_{C^{2,1}(U)}+1)\,,
\end{equation}
following the proof in Lemma~\ref{lem:PV-weak-local} and Theorem~\ref{thm:PV-weak}.

We now quantify how the weak local error of the kernel formula $\K_V^h$ accumulates along the discrete density propagation.  
The next result provides control over both the local truncation error and the error inherited from previous steps.

\begin{theorem}[Weak score oracle with propagated base error]
\label{thm:oracle-C01-test}
Consider the kernel formula iteration $\widehat\rho_{k+1} = \K_V^h\widehat\rho_k$.  
There exists $h_0\in(0,1]$ such that, for every $h\in(0,h_0]$, every test function $\varphi\in C^{2,1}_{\mathrm{loc}}(\R^d)$, and densities $\rho(t_k),\,\widehat\rho_k \in C^{0,1}_{\mathrm{loc}}(\R^d)$ satisfying the weighted moment condition of Theorem~\ref{thm:PV-weak}, one has
\begin{align}\label{eq:oracle-with-propagation-C01} 
\Big|
\big\langle \varphi,\,
\widehat\rho_{k+1}-\rho(t_k)-h\mathcal{L}^{*}\rho(t_k) \big\rangle \Big|\le &C\,h^2\bigl(\|\varphi\|_{C^{2,1}(U)}\|\rho(t_k)\|_{C^{0,1}(U)}+1\bigr)
\\&+
\frac{1}{1+\alpha h}\|\widehat\rho_k-\rho(t_k)\|_{(C^{0,1})^*(U)}\|\varphi\|_{C^{0,1}(U)}\,.\notag
\end{align}

Moreover, if at each step the oracle is refreshed so that the past error vanishes, then the propagation term in~\eqref{eq:oracle-with-propagation-C01} disappears, and
\[
\big|\langle \varphi,\,
\K_V^h(\rho(t_k))-\rho(t_k)-h\mathcal{L}^{*}\rho(t_k)
\rangle\big|
\le
C\,h^2\,
\bigl(\|\varphi\|_{C^{2,1}(U)}\,
\|\rho(t_k)\|_{C^{0,1}(U)}+1\bigr)\,.
\]
\end{theorem}

\begin{proof}
Add and subtract $\K_V^h\rho(t_k)$ and apply duality:
\[
\begin{aligned}
\langle\varphi,\,
\widehat\rho_{k+1}-\rho(t_k)-h\mathcal L^{\!*}\rho(t_k)\rangle
&=
\underbrace{\langle\varphi,\,
\K_V^h\rho(t_k)-\rho(t_k)-h\mathcal L^{\!*}\rho(t_k)\rangle}_{\text{local truncation}}
+
\underbrace{\langle P_V^h\varphi,\,
\widehat\rho_k-\rho(t_k)\rangle}_{\text{propagated base error}}\,.
\end{aligned}
\]

By Theorem~\ref{thm:PV-weak}, the local truncation term is bounded by $C\,h^2\,(\|\varphi\|_{C^{2,1}}\|\rho(t_k)\|_{C^{0,1}}+1)\,.$

For the propagated error, Lemma~\ref{lem:PV-regularization} yields the $C^{0,1}$–stability estimate
\[
\|P_V^h\varphi\|_{C^{0,1}(U)}
\le \frac{1}{1+\alpha h}\,\|\varphi\|_{C^{0,1}(U)}\,.
\]
Substituting this into the dual pairing gives~\eqref{eq:oracle-with-propagation-C01}.
\end{proof} 
 
The weak expansion established in Theorem~\ref{thm:oracle-C01-test} shows that the kernel operator $\K_V^h$ approximates the Fokker--Planck flow with second-order accuracy in the weak sense, up to a contraction of previously accumulated errors in the dual Lipschitz norm.  
This estimate holds provided the test functions admit a uniform $C^{2,1}$ bound and the evolving densities are uniformly controlled in $C^{0,1}$ on bounded sets.

In the convergence analysis of the BRWP scheme, however, test functions are no longer arbitrary: they will be replaced by expressions involving the densities themselves, including the KL divergence and the Fisher information. To justify such substitutions and the associated integration-by-parts identities, it is therefore necessary to establish higher regularity and uniform positivity of the discrete densities generated by the algorithm.

Accordingly, in the next subsection we study the densities produced by the BRWP iteration and prove that the sequence $\{\rho_k\}$ enjoys uniform local $C^{3,1}$ bounds under Assumptions~\ref{assump:Regularity}–\ref{assump:moment}.  
This regularity ensures that the weak second-order accuracy of $\K_V^h$ remains valid when test functions are replaced by density-dependent quantities, thereby allowing the weak expansion to propagate through the full BRWP algorithm and into the KL-based convergence analysis.

\subsection[Regularity of BRWP iteration]{Uniform local regularity of the BRWP iterates}
\label{sec_C31BRWP}

We now show that the iterates $\rho_k$ remain uniformly bounded in $C^{3,1}(U)$. The argument has two main components.

First, we compare the BRWP sequence $\rho_k$ with the smooth reference sequence $\widehat\rho_k$ generated by repeated applications of the kernel formula.
The construction of $\widehat\rho_k$  makes it easier to obtain uniform $C^{3,1}$ estimates, and the comparison $\rho_k-\widehat{\rho}_k$ allows us to transfer these bounds back to the original BRWP iterates.

Second, we rely on a lifting argument based on the localized resolvent equation $(I-\K_V^h)u=\mu$ where $\mu$ represents the weak error accumulated at each iteration.
This equation exploits the smoothing property of $\K_V^h$ to upgrade moment bounds on a measure $\mu$ into $C^{3,1}$ control of the corresponding solution $u$. We begin by stating the key lemma that encodes this lifting property.

\begin{lemma}[Lifting lemma for localized resolvent in $C^{3,1}$] 
\label{Lem:KV_lift-C31} 
Let $\eta\in C^\infty(U_r)$ satisfy $\eta\equiv 1$ on $U$.  
Let $\mu$ be a finite signed measure on $U_r$ with $M_p(\mu)<\infty$, for $p>d+2$.
For $h\in(0,h_0]$, consider the localized resolvent equation
\begin{equation}\label{eq:resolvent-eq-C31}
u-\K_V^h u=\eta\mu \,,
\qquad 
u|_{\partial U_r}=0 \,.
\end{equation}
Then \eqref{eq:resolvent-eq-C31} admits a unique solution $u\in C^{3,1}(U)$ satisfying
\begin{equation}\label{eq:main-bound-C31}
\|u\|_{C^{3,1}(U)}
\le 
C\,M_p(\mu) \,,
\end{equation}
where $C=C(d,U,U_r,p,\|V\|_{C^{4,1}(U_r)})$ is independent of $h\in(0,h_0]$ \,.
\end{lemma}
\begin{proof}[Sketch of proof]
Writing $u=\psi v$  and using the kernel formula $\K_V^h f=\psi\,\G_h(Z^{-1} f)$ with the heat–kernel expansion $Z=\psi+h\Delta\psi+O(h^2)$, we obtain
\[
v-\G_h v=\G_h\left((w_h-1)v\right)+\psi^{-1}\eta\mu\,,
\qquad 
w_h:=\psi/Z=1+O(h)\,.
\]

Next, decompose $\G_h=\G_h^D+R_h$ using the Dirichlet heat kernel on $U_r$.  
The boundary remainder satisfies the Gaussian smallness estimate
\[
\|R_h\|_{L^1(U_r)\!\to C^{3,1}(U)} \le C e^{-c r_0^2/h}\,,
\qquad r_0:=\dist(U,\partial U_r)\,,
\]
so that applying the Dirichlet resolvent 
$\mathcal R_h^D=(I-\G_h^D)^{-1}=\sum_{n\ge0}(\G_h^D)^n$ gives
\[
v=(\mathcal A_h^{(0)}+\mathcal A_h^{(1)}+\mathcal A_h^{(2)})v
+\mathcal R_h^D(\psi^{-1}\eta\mu)\,,
\]
where $\mathcal A_h^{(0)}$, $\mathcal A_h^{(1)}$, and $\mathcal A_h^{(2)}$ collect the terms involving $R_h$ and $(w_h-1)$.

Since the condition $p>d+2$ ensures that the Dirichlet resolvent maps measures with finite $p$-moment into $C^{3,1}(U)$,
the Heat-kernel bounds in Lemma~\ref{lem:Dirichlet-Gaussian-derivative} imply
\[
\|\mathcal A_h^{(0)}\|\le Ce^{-c r_0^2/h}\,,\qquad
\|\mathcal A_h^{(1)}\|\le Ch\,,\qquad
\|\mathcal A_h^{(2)}\|\le Ch e^{-c r_0^2/h}\,.
\]
Choosing $h_0>0$ small ensures $\|\mathcal A_h^{(0)}+\mathcal A_h^{(1)}+\mathcal A_h^{(2)}\|\le\tfrac12$, for $h\leq h_0$, so the equation for $v$ can be inverted by a Neumann series:
\[
v = \left(I-\mathcal A_h^{(0)}-\mathcal A_h^{(1)}-\mathcal A_h^{(2)}\right)^{-1} \mathcal{R}_h^D(\psi^{-1}\eta\mu)\,.
\]

By Lemma~\ref{lem:kernel-only-resolvent-C31}, the Dirichlet resolvent satisfies the interior estimate
\[
\|\mathcal R_h^D \nu\|_{C^{3,1}(U)} \le C M_p(\nu)\,.
\]
Since $\psi^{-1}\eta$ is smooth and bounded, $\|v\|_{C^{3,1}(U)}\le C\,M_p(\mu)$.

Finally, $\psi\in C^{3,1}(U_r)$ implies
\[
\|u\|_{C^{3,1}(U)}
=\|\psi v\|_{C^{3,1}(U)}
\le C\,\|v\|_{C^{3,1}(U)}
\le C\,M_p(\mu)\,.
\]
\end{proof}

Finally, we establish uniform local $C^{3,1}$ regularity for the BRWP iterates.
The argument is based on a lifting mechanism for weak errors through a localized resolvent equation.

\begin{theorem}[Uniform local $C^{3,1}$ bound for the BRWP iterates]
\label{thm:uniform-C31}
Let $0<h\le h_0$, where $h_0\in(0,1]$ is the constant in Lemma~\ref{Lem:KV_lift-C31}.  
Then there exists a constant
\[
C_* = C_*\big(d,\beta,\alpha,\|V\|_{C^{4,1}(U_r)}, U,U_r, p, M_p(\rho_0), h_0\big) < \infty
\]
such that, for all $k\ge0$,
\begin{equation}\label{eq:C31-uniform}
\|\rho_k\|_{C^{3,1}(U)} \le C_* \,.
\end{equation}
In particular, $C_*$ is independent of the iteration index $k$ and the step size $h\in(0,h_0]$.
\end{theorem}
\begin{proof}[Sketch of proof]
    Let $\epsilon_k := \rho_k - \widehat{\rho}_k$ denote the difference between the BRWP iterate and the kernel-propagated reference density.
On the localization domain $U$, this error satisfies a resolvent relation of the form
\[
(I-\K_V^h)u = \epsilon_k \,.
\]
Applying Lemma~\ref{Lem:KV_lift-C31} with $\mu=\epsilon_k$ yields a function $u_k\in C^{3,1}(U)$ such that
\[
\epsilon_k = u_k - \K_V^h u_k,
\qquad
\|u_k\|_{C^{3,1}(U)} \le C\,M_p(\epsilon_k)\,.
\]
Since the kernel operator $\K_V^h$ is bounded on $C^{3,1}(U)$ (Lemma~\ref{lem:Kv-rhok-C31}), this representation lifts the weak control of $\epsilon_k$ into a strong bound,
\[
\|\epsilon_k\|_{C^{3,1}(U)} \le C\,M_p(\epsilon_k)\,.
\]

The same argument applies to the reference sequence $(\widehat\rho_k)$.
Combined with the uniform moment bounds, this yields
\[
\|\widehat\rho_k\|_{C^{3,1}(U)} \le C \,,
\]
uniformly in $k$.
Since $\rho_k=\widehat\rho_k+\epsilon_k$ and both components are uniformly controlled, the BRWP iterates inherit a uniform local $C^{3,1}$ bound.
\end{proof}

The $C^{3,1}$ bounds established above are local in space and depend on the choice of the bounded set $U\Subset\R^d$.
We do not claim global uniform $C^{3,1}$ regularity on $\R^d$.
Instead, the behavior at infinity is controlled by Assumption~\ref{assump:growth}
together with the uniform $p$-moment bounds from Lemma~\ref{lem:moment-p-discrete}.
These conditions ensure the required weighted integrability of the densities
(and of the kernel expressions appearing in the analysis), so that all global
integrals and Gaussian integration-by-parts identities used in
Section~\ref{sec_conv_KL} are well defined, with constants uniform in $k$.

\subsection{Weak error of the kernel-based score oracle}

We now establish the key error estimate for the score oracle produced by the kernel formula. Our goal is to show that $\nabla\log(\K_V^h\rho_k)$ provides an $\mathcal{O}(h^{2})$ approximation to the score function at the next time point $\nabla\log\rho_{k+1}$, thereby ensuring that the BRWP iteration acts as a semi-implicit discretization in the score function.  
We emphasize that the next theorem is phrased in terms of the Langevin generator $\mathcal{L}^*$: indeed, the quantity
\[
\nabla\log\rho_k 
\;+\; 
h\,\nabla\!\Bigl(\tfrac{\mathcal L^*\rho_k}{\rho_k}\Bigr)
\]
also serves as the approximation to $\nabla\log\rho_{k+1}$ that will be verified in Section~\ref{sec_conv_KL}.

\begin{theorem}[Weak score-oracle]\label{thm:weak-score-drift}
Let $\rho_k$ be the BRWP iterates, and let $\tilde{\rho}_k$ be any density satisfying
the same regularity bounds as $\rho_k$.
Then, for every vector field $w\in C^{1,1}(U;\R^d)$ and every $k\ge0$,
\begin{equation}\label{eq:weak-score-drift}
\Bigg|
\int_{U}w \cdot\Bigl[\nabla\log(\K_V^h\tilde\rho_k)-\nabla\log\rho_k-h\,\nabla\!\Bigl(\tfrac{\mathcal L^*\rho_k}{\rho_k}\Bigr)\Bigr]\rho_k\,dx\Bigg|
\le
C\Bigl(h^2+\|\tilde\rho_k-\rho_k\|_{C^{2,1}(U)}\Bigr)\|w\|_{C^{1,1}(U)}\,.
\end{equation}
The constant $C$ depends only on the parameters and the uniform $C^{3,1}$ bounds for $(\rho_k)_{k\ge0}$\,.
\end{theorem}

\begin{proof}[Sketch of proof.]
The proof follows the idea of the weak $O(h^{2})$ expansion for test functions. Fix $k\ge0$ and write $r:=\rho_k$,  $r_K:=\K_V^h\rho_k$, and $r_*:=r+h\,\mathcal L^* r$. 
We decompose the error in approximating the score function into an ideal kernel contribution and an oracle perturbation:
\begin{align*}
&\int w\cdot\bigl[\nabla\log(\K_V^h\tilde\rho_k)
-\nabla\log r - h\nabla(\tfrac{\mathcal L^*r}{r})\bigr] r dx
\\= &\int w\!\cdot\!\Bigl[\nabla\log r_K-\nabla\log r-h\nabla\!\Bigl(\tfrac{\mathcal L^*r}{r}\Bigr)\Bigr]r dx 
 +\int w\!\cdot\bigl[\nabla\log(\K_V^h\tilde\rho_k)-\nabla\log(\K_V^h\rho_k)\bigr]\rho_k dx 
\end{align*}
where the score is well defined due to the uniform positivity of $\rho_k$ by Lemma~\ref{lem:local-positivity}. We denote the first and second terms above as $I^{0}$ and $I^{1}$.

For $I^{0}$, we insert and subtract $\nabla\log r_*$ and exploit positivity and $C^{3,1}$ regularity of all densities to rewrite $I^{0}$ as integrals against $r_K-r_*$.  The coefficients appearing in these integrals lie in $C^{0,1}$ with norms controlled by $\|w\|_{C^{1,1}(U)}$.  
Applying the weak $O(h^2)$ expansion for the kernel formula (Theorem~\ref{thm:PV-weak}) yields $I^{0}=O(h^{2}\|w\|_{C^{1,1}(U)})$. 

For $I^{1}$, the $C^{3,1}$–smoothing property of $\K_V^h$ implies
\(
\|\K_V^h\tilde\rho_k-\K_V^h\rho_k\|_{C^1(U)}
\leq
C\|\tilde\rho_k-\rho_k\|_{C^{2,1}(U)}.
\)
Since the map $\rho\mapsto\nabla\log\rho$ is Lipschitz from $C^{2,1}(U)$ to $C^{1}(U)$ on positive densities, we obtain 
\[
|I^{1}|
\leq  
C\|w\|_{C^{1,1}(U)}\,
\|\tilde\rho_k-\rho_k\|_{C^{2,1}(U)}.
\]

Combining the bounds on $I^{0}$ and $I^{1}$ gives the claimed weak score oracle approximation with propagated base error.
\end{proof}

We remark that the above theorem applies if we replace $\rho_k$ by the exact solution to the Fokker-Planck equation $\rho(t_k)$. The local regularity of $\rho(t_k)$ follows from the global hypocoercive regularity for the Fokker-Planck semigroup with strongly convex potential~$V$
(see, e.g., \citep{Villani2009Hypocoercivity}).}

\section{Convergence Analysis of the BRWP Update in KL Divergence}\label{sec_conv_KL}

After establishing the approximation properties of the kernel operator $\K_V^h$ in Section~\ref{sec_kernel_approx}, we now turn to the convergence analysis of the BRWP update.  
Our objective is to quantify the decay of the KL divergence along the iteration by deriving a weak one-step expansion of the density evolution.

\blue{Recall that the BRWP iteration is given by
\begin{equation}\label{eqn:BRWP_step}
x_{k+1}
= x_{k}
- h\Bigl(\nabla V(x_{k})
+ \beta^{-1} \nabla \log \widetilde{\rho}_{k+1}(x_{k})\Bigr),
\end{equation}
where the next-step density is approximated by
$\widetilde{\rho}_{k+1} = \K_V^h\rho_k$.

To streamline the convergence analysis, we isolate the numerical property of the score approximation required in this section, namely a weak second–order consistency condition.
This is not an additional analytical assumption for the BRWP scheme: for the kernel-based update
\(
\widetilde{\rho}_{k+1}=\K_V^h\rho_k\,,
\)
the condition has already been verified constructively in Theorem~\ref{thm:weak-score-drift}. We state it explicitly in order to decouple the analytical convergence argument from the numerical realization of the kernel formula, which is discussed separately in Section~\ref{sec_score_implementation}.

Viewed in this way, the following assumption serves as a numerical interface assumption: it captures the minimal weak accuracy required of the score approximation without committing to a specific implementation strategy. The subsequent analysis will still rely on the regularity and moment assumptions already established for the BRWP scheme, and is not intended to cover alternative sampling schemes. Moreover, the condition is understood to hold only for stepsizes $0<h\le h_0$, where $h_0>0$ is the maximal range on which the $O(h^2)$ weak expansion and the regularity estimates established in Section~\ref{sec_kernel_approx} remain valid; all stepsize conditions below are implicitly subject to this restriction.
\begin{assumption}[Weak score oracle]
\label{assump:SO-weak}
For any fixed bounded open set $U \subset \R^d$,
\begin{equation}
\left|
\int_{\R^d}
w \!\cdot\!
\left[
\nabla\log \widetilde{\rho}_{k+1}
- \nabla\log\rho_{k}
- h\nabla\!\left(\frac{\mathcal{L}^*\rho_k}{\rho_k}\right)
\right]
\rho_k\,dx
\right|
\;\le\;
C\,h^2 \,\|w\|_{C^{1,1}(U)} \,,
\end{equation}
for all $w\in C^{1,1}(U;\R^d)$, uniformly in $k\ge0$.
\end{assumption}

We refer to $\nabla\log \widetilde{\rho}_{k+1}$ as a weak score oracle. The above $O(h^2)$ bound means that, when tested against smooth vector fields, the approximate score reproduces the first two terms of the short-time Fokker–Planck expansion in a weak sense; see Lemma~\ref{Lemma_rho_k_iteration}.

Throughout this section, all results are derived under Assumptions~\ref{assump:Regularity}–\ref{assump:moment} stated in Section~\ref{sec_BRWP_algo}. 
All $\mathcal{O}(\cdot)$ terms are understood to be uniform on compact subsets of $\R^d$. Unless stated otherwise, constants may depend on the potential $V$, the dimension $d$, the inverse temperature $\beta$, and the local domains $U$, but are independent of the stepsize $h$ and the iteration index $k$.

By Theorem~\ref{thm:uniform-C31}, the BRWP iterates $\rho_k$ enjoy uniform local $C^{3,1}$ regularity on $U$, and Lemma~\ref{lem:local-positivity} ensures uniform positivity on $U$.
Consequently, $\log(\rho_k/\rho^*)$ is well defined in $C^{3,1}(U)$, and all differential expressions appearing below—such as $\nabla\log\rho_k$ and $\nabla\!\cdot(\rho_k\nabla u)$—are well defined pointwise.

Moreover, the uniform $p$-moment bounds from Lemma~\ref{lem:moment-p-discrete} guarantee sufficient decay of $\rho_k$ at infinity.
As a result, integration by parts over $\R^d$ is justified, and the identity
\[
\int_{\R^d} f\,\nabla\!\cdot(\rho_k\nabla g)\,dx
= -\int_{\R^d} \langle\nabla f,\nabla g\rangle\,\rho_k\,dx
\]
holds rigorously.

Finally, the localization domain $U$ is chosen sufficiently large so that the cutoff radius condition in the proof of Theorem~\ref{thm:PV-weak} is satisfied for all admissible stepsizes considered in this section.}

\subsection{Weak one-step expansion of the BRWP density}
\label{subsec:one-step-expansion}

In this subsection, we derive the weak one–step expansion of the BRWP update under an $O(h^{2})$–accurate score oracle.  
This expansion provides the discrete analogue of the infinitesimal generator identity for the Fokker–Planck flow.
The calculation is carried out in the weak sense, and all derivatives fall on smooth test functions.

\begin{lemma}[Weak expansion of the BRWP density update]
\label{Lemma_rho_k_iteration}
Let $(\rho_k)_{k\ge0}$ be the BRWP iterates.  
Let $u\in C^{3,1}_{loc}(\R^d)$ satisfy $M_p(u)< \infty$ for $p > d+3$.
Then
\begin{align}
\label{eq:weak-rho-k-final}
\langle u,\rho_{k+1}-\rho_k\rangle &=h\,\Big\langle  u \,,\nabla\cdot\left(\rho_k\,\nabla\!\Bigl(\beta^{-1}\log\!\frac{\rho_k}{\rho^*}\Bigr)\right)\Big\rangle\\
&\quad- h^2\beta^{-1}
   \Big\langle u \,,\, \nabla\cdot\left(
   \rho_k\nabla\left(\tfrac{\mathcal L^{*}\rho_k}{\rho_k}
   \right)\right)\Big\rangle +\frac{h^2}{2}\,
   \big\langle u \,,\, \nabla\cdot(\rho_k\zeta_k)\big\rangle + \mathcal{O}\!\bigl(h^3\|u\|_{C^{3,1}(U)}\bigr) \,,\notag
\end{align}
where $U\Subset\R^d$ and
\[
\zeta_k
:= \nabla\!\cdot\!\bigl((\nabla\Phi_k)(\nabla\Phi_k)^{\!\top}\bigr)
  + (\nabla\Phi_k\cdot\nabla\log\rho_k)\,\nabla\Phi_k\,,\quad \text{ where }\Phi_k = \beta^{-1}\logks\,,
\]
\blue{Here $\nabla\!\cdot\![A]$ denotes the vector field whose $i$th component is $\sum_{j} \partial_{x_j} A_{ij}$.}
\end{lemma}
\begin{proof}[Proof sketch]
The full proof is given in Appendix~\ref{sec:app_KL}. 
We outline the main steps. 

A cutoff argument reduces the proof to a compact domain. The BRWP update is expanded using a second-order Taylor expansion of the transport map around $x_k$, with all derivatives falling on the test function $u$. 
The weak score oracle property (Assumption~\ref{assump:SO-weak}), verified for the kernel-based BRWP update in Section~\ref{sec_kernel_approx}, is used precisely to replace  $\nabla\log\widetilde\rho_{k+1}$ by  $\nabla\log\rho_k + h\nabla(\mathcal L^*\rho_k/\rho_k)$ in the weak sense. Collecting all terms yields the stated expansion.
\end{proof}

We remark that the moment condition on $u$ is needed only to justify passing to the limit in the cutoff argument: dominated convergence requires $(1+\|x\|)^p u(x) \in L^1$ which already suffices since the BRWP iterates have uniformly bounded $p$-moments (Lemma~\ref{lem:moment-p-discrete}).  
The stronger assumption $p>d+3$ is imposed only for consistency with the following discussion, where repeated integration by parts and the polynomial growth of the derivatives of $V$ introduce higher-order polynomial factors that must remain integrable.

To express the expansion more compactly, introduce the weighted elliptic operator  
\begin{equation}
\label{def_Dk}
\mathcal D_k^\beta(u)
:=\frac{\beta^{-1}}{\rho_k}\,\nabla\!\cdot(\rho_k\nabla u)\,,
\qquad
\mathcal D_k^\beta(u)\,\rho_k
=\beta^{-1}\nabla\!\cdot(\rho_k\nabla u)\,.
\end{equation}
Using the identity  
\[
\frac{\mathcal L^{\!*}\rho_k}{\rho_k}
= \mathcal D_k^\beta\left(\logks\right)\,,
\]
we can rewrite the second-order weak expansion \eqref{eq:weak-rho-k-final} as
\begin{equation}
\label{eqn:rho_k_backward}
\rho_{k+1}
=\rho_k
+h\,\mathcal D_k^\beta\left(\logks\right)\,\rho_k
+h^2\,(\mathcal D_k^\beta\!\circ\!\mathcal D_k^\beta)\left(\logks\right)\,\rho_k
+\frac{h^2}{2}\,\nabla\!\cdot\!\big(\zeta_k\,\rho_k\big)
+\mathcal O(h^3)\,.
\end{equation}
In applications, the test function $u$ in \eqref{eq:weak-rho-k-final} is chosen according to the functional we wish to expand.  
For example, $u=\rho^*$ for $\int \log\rho_{k+1}\,\rho^*\,dx$, and $u=\rho_{k+1}$ for $\int \rho_{k+1}\log\rho_{k+1}\,dx$.
These choices generate the appropriate couplings in the weak formulation and allow all occurrences of $\rho_{k+1}$ to be replaced by \eqref{eqn:rho_k_backward}.

\subsection{One-step decay of the KL divergence}

With the one-step weak expansion established, we now analyze how the KL divergence evolves along the BRWP iteration by quantifying the decrease of $\KL(\rho\|\rho^*)$ produced by \eqref{eqn:BRWP_step}. Let
\(
\FI(\rho\|\rho^*):=\int \|\nabla\log(\rho/\rho^*)\|^2\,\rho\,dx
\)
be the relative Fisher information, then we have the following.

\begin{lemma}[KL contraction per step]\label{Lemma_KL_decay_pre}
Along the update \eqref{eqn:BRWP_step} from $t_k$ to $t_{k+1}=t_k+h$,
\begin{align}
\label{eqn:decay_KL_1}
\KL(\rho_k\|\rho^*)-&\KL(\rho_{k+1}\|\rho^*)
 = h\,\beta^{-1}\,\FI(\rho_k\|\rho^*)
-\frac{3h^2}{2}\int \Big|\mathcal D_k^\beta\!\Big(\log\frac{\rho_k}{\rho^*}\Big)\Big|^2 \rho_k\,dx
\\[-2pt]
&\quad-\frac{h^2}{2}\,\beta^{-2}
\int \Big\langle \nabla\log\frac{\rho_k}{\rho^*},\,\nabla^2\log\frac{\rho_k}{\rho^*}\,\nabla\log\frac{\rho_k}{\rho^*}\Big\rangle \rho_k\,dx
+O(h^3\|\rho_k\|_{C^{3,1}(U)})\,.\notag
\end{align}
All derivatives are understood in the weak sense, and the $O(h^3)$ is uniform for $k$ by Theorem~\ref{thm:uniform-C31}.
\end{lemma}

\begin{proof}
Write
\(
\KL(\rho\|\rho^*)=\int \log(\rho/\rho^*)\,\rho\,dx
\).
Expand $\rho_{k+1}$ by \eqref{eqn:rho_k_backward} and $\log(\rho_{k+1}/\rho^*)$ by a Taylor expansion at $\rho_k$; keep terms up to $O(h^2)$, and test all expressions against smooth compactly supported functions to justify integrations by parts. The $O(h)$ contribution equals
\[
h\int \nabla\logks\cdot\left(\beta^{-1}\rho_k\nabla\logks\right)\,dx
=h\,\beta^{-1}\FI(\rho_k\|\rho^*)\,.
\]
The quadratic $h^2$ terms combine into the negative second term in \eqref{eqn:decay_KL_1} after using the identity
\(
\int \nabla u\cdot \nabla v\,\rho_k\,dx
=\int \mathcal D_k^\beta(u)\,v\,\rho_k\,dx
\)
and symmetrization. The cubic form in the last line of \eqref{eqn:decay_KL_1} comes from the $O(h^2)$ part of the pushforward (the $\tilde\phi$ term in \eqref{eqn:rho_k_backward}) when paired with $\log(\rho_{k+1}/\rho^*)$. Remainders are $O(h^3)$ uniformly thanks to Lemma~\ref{lem:moment-p-discrete}.
\end{proof}

We remark that the appearance of $-3h^2/2$ term reflects the fact that using the implicit score term improves the dissipation compared to the plain explicit Euler step.

\subsection{Convergence of KL divergence and the mixing time}
\label{sec_mixing}
In the expansion of Lemma~\ref{Lemma_KL_decay_pre}, the leading term corresponds to the Fisher information and, via the PL inequality~\eqref{PL_ineq}, yields the exponential decay. Thus, the remaining second and third–order contributions, which arise purely from the discretization, become decisive when comparing the convergence behavior of explicit schemes with that of our semi-implicit update.

Since the discrete update admits a second–order weak expansion with an
$\mathcal O(h^3)$ remainder, it is natural to invoke identities for the second time derivative of the KL divergence along the Fokker–Planck flow.
Let $(\rho_t)_{t\ge0}$ solve the Fokker-Planck equation with initial condition $\rho_0=\rho_k$.  By Lemma~\ref{Lemma_FI_derivative} and Lemma~\ref{lemma_second_order} in the appendix, we obtain
\begin{align}
\label{dt2_KL}
&\frac{1}{2}\frac{d^2}{dt^2}\KL(\rho_t\|\rho^*)\Big|_{t=0}
 =
\beta^{-1}\!\int\!
\left\langle \nabla\log\frac{\rho_k}{\rho^*},
\,\nabla^2 V\,\nabla\log\frac{\rho_k}{\rho^*}\right\rangle
\rho_k\,dx 
+\,
\beta^{-2}\!\int\!
\left\|\nabla^2\log\frac{\rho_k}{\rho^*}\right\|_{\!F}^2
\rho_k\,dx\notag
\\
&=
\int \left|\mathcal{D}_k^{\beta}\!\left(\log\frac{\rho_k}{\rho^*}\right)\right|^2
\rho_k \, dx
\,+\,
\beta^{-2}\!\int\!
\left\langle \nabla \log\frac{\rho_k}{\rho^*},
\,\nabla^2\log\frac{\rho_k}{\rho^*}\,
\nabla \log\frac{\rho_k}{\rho^*}\right\rangle
\rho_k \, dx . 
\end{align}
Since $\rho_k$ enjoys uniform $C^{3,1}$ regularity (Section~\ref{sec_kernel_approx}), all terms above
are well defined, and the identity can be evaluated directly at the discrete state $\rho_k$.

Combining the above second-order time derivative identity with the PL inequality and the contraction of the fourth-order information term established in Lemma~\ref{lemma_4th_order_discrete}, we obtain a Grönwall-type inequality for the decay of the KL divergence between $t_k$ and~$t_{k+1}$.  

\begin{lemma}
\label{lemma_KL_one_step}
Let
\(
M_0 := \beta^{-2} \int \|\nabla\log\frac{\rho_0}{\rho^*}\|^4 \rho_0 \, dx
\)
and $t_k = kh$.
Then the one-step decay of the KL divergence satisfies
\begin{equation}
\label{eqn:KL_one_step_sec_4}
\KL(\rho_{k+1}\|\rho^*)
\leq
\left[1 - 2\alpha h + 3\alpha^2 h^2\right] \KL(\rho_k\|\rho^*)
+ \frac{h^2}{2} M_0 e^{-4\alpha h k}
+ \mathcal{O}(h^3)\,.
\end{equation}
\end{lemma}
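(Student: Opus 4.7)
The plan is to bound each of the four terms appearing on the right-hand side of \eqref{eqn:decay_KL_2} separately, and then assemble them into an implicit inequality relating $\KL(\rho_k\|\rho^*)$ and $\KL(\rho_{k+1}\|\rho^*)$ that can be solved for the latter. For the Fisher information term, I would apply the PL inequality \eqref{PL_ineq} directly to obtain $h\beta^{-1}\FI(\rho_k\|\rho^*) \geq 2\alpha h\,\KL(\rho_k\|\rho^*)$. For the Hessian--quadratic term I would combine $\nabla^2 V \succeq \alpha I$ with PL, producing $\frac{hT\beta^{-1}}{2}\int \langle \nabla\log\frac{\rho_k}{\rho^*},\nabla^2 V\nabla\log\frac{\rho_k}{\rho^*}\rangle\rho_k\,dx \geq \frac{hT\alpha\beta^{-1}}{2}\FI(\rho_k\|\rho^*) \geq s\alpha^2 h^2\KL(\rho_k\|\rho^*)$ after substituting $T = sh$. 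For the fourth-power term I would immediately invoke Lemma \ref{lemma_4th_order_discrete} to get $-\frac{hT\beta^{-2}}{2}\int \|\nabla\log\frac{\rho_k}{\rho^*}\|_2^4 \rho_k\,dx \geq -\frac{sh^2}{2}M_0\exp(-4\alpha hk) + \mathcal{O}(h^3)$.

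The delicate step is the second-derivative term $-\frac{h(h+3T)}{4}\frac{d^2}{dt^2}\KL(\rho_k\|\rho^*)$. Along the Fokker--Planck flow we have the identity $\frac{d^2}{dt^2}\KL = -\beta^{-1}\frac{d}{dt}\FI$, and a forward-difference Taylor expansion gives $h\frac{d}{dt}\FI(\rho_t\|\rho^*)\big|_{t=t_k} = \FI(\rho_{k+1}^{FP}\|\rho^*) - \FI(\rho_k\|\rho^*) + \mathcal{O}(h^2)$, where $\rho_{k+1}^{FP}$ denotes the exact one-step Fokker--Planck propagation of $\rho_k$. By Lemma \ref{Lemma_rho_k_iteration} the discrete iterate $\rho_{k+1}$ agrees with $\rho_{k+1}^{FP}$ up to $\mathcal{O}(h^2)$, so replacing $\rho_{k+1}^{FP}$ with $\rho_{k+1}$ is consistent at the required order. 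Multiplying by the $\mathcal{O}(h)$ prefactor $-(h+3T)/4$ absorbs the $\mathcal{O}(h^2)$ residuals into $\mathcal{O}(h^3)$, and then applying PL separately to $\FI(\rho_k\|\rho^*) \geq 2\alpha\beta\KL(\rho_k\|\rho^*)$ and $\FI(\rho_{k+1}\|\rho^*) \geq 2\alpha\beta\KL(\rho_{k+1}\|\rho^*)$ converts this contribution into a linear combination of $\KL(\rho_k\|\rho^*)$ and $\KL(\rho_{k+1}\|\rho^*)$.

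Assembling the four contributions produces an implicit inequality of the form
\[
\KL(\rho_k\|\rho^*)\,[1 - A] \;\geq\; \KL(\rho_{k+1}\|\rho^*)\,[1 + B] - \frac{sh^2}{2}M_0\exp(-4\alpha hk) + \mathcal{O}(h^3),
\]
with $A = \frac{3(1-s)\alpha h}{2} + s\alpha^2 h^2$ and $B = \frac{(1+3s)\alpha h}{2}$. Solving for $\KL(\rho_{k+1}\|\rho^*)$ and expanding $(1+B)^{-1} = 1 - B + B^2 + \mathcal{O}(h^3)$, the leading identities $A + B = 2\alpha h + s\alpha^2 h^2$ and $B(A+B) = (1+3s)\alpha^2 h^2 + \mathcal{O}(h^3)$ give a quadratic coefficient $-s\alpha^2 + (1+3s)\alpha^2 = (1+2s)\alpha^2$, yielding the advertised bound.

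The main obstacle is bookkeeping: I must carefully justify that the $\mathcal{O}(h^2)$ error from replacing the continuous-time derivative of $\FI$ by a finite difference, together with the $\mathcal{O}(h^2)$ gap between $\rho_{k+1}$ and $\rho_{k+1}^{FP}$, really contribute only at $\mathcal{O}(h^3)$ once multiplied by the $\mathcal{O}(h)$ prefactor, and then track every cross term between $A$ and $B$ in the second-order expansion to produce exactly the coefficient $(1+2s)$ rather than, e.g., $(1+3s)$ or $(1+s)$.
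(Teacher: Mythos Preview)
Your overall architecture matches the paper's proof: you use the same finite-difference replacement $\tfrac{d^2}{dt^2}\KL \approx \tfrac{\beta^{-1}}{h}(\FI(\rho_k)-\FI(\rho_{k+1}))$, the same invocation of Lemma~\ref{lemma_4th_order_discrete} for the quartic term, and the same expansion of $(1-A)/(1+B)$ at the end. The final algebra you sketch is correct and produces exactly $(1+2s)\alpha^2 h^2$.

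There is, however, a genuine gap in the step where you ``apply PL separately to $\FI(\rho_k)$ and $\FI(\rho_{k+1})$'' inside the second-derivative contribution. That contribution is $\tfrac{h+3T}{4\beta}\big(\FI(\rho_{k+1})-\FI(\rho_k)\big)$, and the piece $-\tfrac{h+3T}{4\beta}\FI(\rho_k)$ carries a \emph{negative} coefficient. The PL inequality reads $\FI\ge 2\alpha\beta\,\KL$, hence $-\FI(\rho_k)\le -2\alpha\beta\,\KL(\rho_k)$, which is an \emph{upper} bound, not the lower bound you need on the right-hand side of \eqref{eqn:decay_KL_2}. Applied term-by-term as you describe, the chain of inequalities breaks here.

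The paper's fix is to refrain from applying PL to any $\FI(\rho_k)$ term until all three contributions (from the leading Fisher term, from the finite-difference piece, and from the $\nabla^2 V\succeq\alpha I$ bound) have been collected. Their combined coefficient is
\[
\frac{h}{\beta}\Big(1-\frac{h+3T}{4h}+\frac{\alpha T}{2}\Big)=\frac{h}{\beta}\Big(\frac{3(1-s)}{4}+\frac{s\alpha h}{2}\Big),
\]
which is nonnegative precisely because $s\in[0,1]$; only then is PL applied in the correct direction. This is the role of the hypothesis $T\le h$ in the statement, and it is the sign check you are missing. Once you insert this grouping step, your $A$, $B$, and the subsequent second-order expansion go through unchanged.
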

 
With this estimate in hand, we are ready to derive the main convergence guarantee for the BRWP algorithm in terms of KL divergence.

\begin{theorem}
\label{Thm_KL_decay}
There exists $h_0>0$ such that for any stepsize
\[
0 < h \le h^* := \min\!\left\{h_0,\;\frac{2}{3\alpha}\right\}\,,
\]
the KL divergence of the BRWP iteration \eqref{eqn:BRWP_step} at step $k$
satisfies
\begin{align}
\label{eqn:KL_rhok}
\KL(\rho_k\|\rho^*)
\leq\ &
\exp\!\left[-\alpha k h \left(2 - 3\alpha h\right)\right] \KL(\rho_0\|\rho^*)
\\
&\quad
+ \frac{h^2}{2}\,M_0\,(k+1)\,
\max\!\Big\{
\big(1-2\alpha h+3\alpha^2h^2\big)^k,\ e^{-4\alpha kh}
\Big\}
+ \mathcal{O}(h^2)\,.
\notag
\end{align}
Moreover, the algorithm achieves an error $\KL(\rho_k\|\rho^*) \le \delta$ with
\[
k = \mathcal{O}\!\left(\frac{|\ln \delta|}{2\alpha \sqrt{\delta}}\right)\,,
\qquad
h = \sqrt{\delta}\,,
\]
whenever $\sqrt{\delta} \le h^*$ and $\delta \ll 1$.
\end{theorem}
\begin{proof}
Apply Lemma~\ref{Lemma_sequence_converge} to the one-step decay from
Lemma~\ref{lemma_KL_one_step}:
\[
\KL(\rho_{k+1}\|\rho^*)
\le \Big(1-2\alpha h+3\alpha^2h^2\Big)\KL(\rho_k\|\rho^*)
+\frac{h^2}{2}M_0 e^{-4\alpha hk}
+\mathcal{O}(h^3)\,.
\]
With $a_k=\KL(\rho_k\|\rho^*)$, $c_1=\alpha(2-3\alpha h)$ (so that
$1-c_1h=1-2\alpha h+3\alpha^2h^2$), $c_2=M_0/2$, and $c_3=4\alpha$,
Lemma~\ref{Lemma_sequence_converge} yields \eqref{eqn:KL_rhok}.

For the complexity statement, set $h^2=\delta$ and require the first term in
\eqref{eqn:KL_rhok} to be $\mathcal{O}(\delta)$, which gives
\[
k \ge \frac{\ln(\KL(\rho_0\|\rho^*))-\ln\delta}{\alpha h (2-3\alpha h)}\,.
\]
Substituting $h=\sqrt{\delta}$ yields the claimed scaling.
\end{proof}

From \citep{TT_BRWP, BRWP_2023}, the BRWP scheme exhibits a bias of order $\mathcal{O}(h^2)$ for Gaussian targets, which is consistent with our analysis. 

\blue{
 We remark that the existing $\mathcal{O}(\log d)$ lower bounds for sampling complexity \citep{chewi2024analysis} are derived for algorithms that interact with the target distribution through noisy or finite-sample gradient oracles. In contrast, our iteration-complexity bound is obtained under a weak score oracle assumption and analyzes the deterministic BRWP update map itself, rather than a stochastic oracle model. As a result, these information-theoretic lower bounds do not apply directly to the deterministic model considered here, and no explicit $\log d$ factor appears in the convergence rate.}

In practice, one often chooses $h$ small to reduce the bias, and it is natural to ask for explicit guidance on the largest permissible or optimal stepsize. The next corollary addresses this question. It follows directly from the explicit decay estimate in Theorem~\ref{Thm_KL_decay} and is especially useful when the strong-convexity constant $\alpha$ is small or when adaptive stepsizes are employed—for instance, using a larger stepsize during the initial iterations.
 
\begin{corollary}
\label{cor:step_size_BRWP}
Let $h_0>0$ be the maximal stepsize for which the weak $O(h^2)$ expansion and regularity estimates established in Section~\ref{sec_kernel_approx} remain valid.
The BRWP iteration enjoys KL contraction whenever  $0<h<\frac{2}{3\alpha}$ and $h\le h_0$. Thus, the maximal admissible stepsize is $\min\{h_0,\,2/(3\alpha)\}$.
\end{corollary}

Some other quantities are also often useful for measuring the efficiency of a sampling algorithm, including the Wasserstein-2 distance between \( \rho_k \) and \( \rho^* \) and the mixing time. In particular, the mixing time is defined as
\begin{equation}
t_{mix}(\delta, \rho_0) = \min\{k \mid d_{TV}(\rho_k, \rho^*) \leq \delta\}\,,
\end{equation}
where \( \rho_k \) is the density function at time \( t_k \) starting from $\rho_0$ and
\begin{equation}
d_{TV}(\rho_k, \rho^*) = \frac{1}{2}\int_{\R^d} |\rho_k(x) - \rho^*(x)| \, dx\,.
\end{equation}

Recalling Pinsker's inequality
\[
d_{TV}(\rho_k, \rho^*)^2 \leq \frac{1}{2} \KL(\rho_k \| \rho^*)\,,
\]
and the Talagrand's inequality
\[
\frac{\alpha}{2} W_2(\rho_k, \rho^*)^2 \leq \KL(\rho_k \| \rho^*)\,,
\]
we obtain the following direct corollary of Theorem \ref{Thm_KL_decay}.

 \begin{corollary}
Let $\{\rho_k\}$ be the BRWP iterates with stepsize $h\in(0,h_0]$. Then, using the discrete KL contraction of Theorem~\ref{Thm_KL_decay}, the Wasserstein-2 error satisfies
\[
W_2(\rho_k,\rho^*)^2\leq 
\frac{2\,e^{-\alpha k h(2 - 3\alpha h)}}{\alpha}\,
\KL(\rho_0\|\rho^*)
\,+\,
\frac{2h^2 M_0\,e^{-4\alpha k h}}%
{\alpha\!\left(e^{-4\alpha h} - (1 - 2\alpha h + 3\alpha^2 h^2)\right)}
\,+\,\mathcal{O}(h^2)\,.
\]

In particular, to reach accuracy $W_2(\rho_k,\rho^*)^2 \le \delta$,  choose any stepsize $h=\delta$ such that
\[
\delta \le h^* := \min\!\left\{h_0,\;\frac{2}{3\alpha}\right\}.
\]
Then the mixing time satisfies
\[
t_{\mathrm{mix}}(\delta,\rho_0)
= k h = \mathcal{O}\!\left(
\frac{1}{\alpha}\,
\Big|\ln\frac{\KL(\rho_0\|\rho^*)}{\delta}\Big|
\right)\!.
\]
\end{corollary}

\section{Estimation of the Score Function and Practical Considerations}
\label{sec_score_implementation}

\blue{
The numerical accuracy of the score approximation used in the BRWP update is a critical factor in the performance of the algorithm.
Any error in the evaluation of the score enters the iteration directly and may propagate across steps as an additional perturbation.
In this section, we discuss practical score approximation schemes through the lens of Assumption~\ref{assump:SO-weak} and outline several numerical strategies that can meet this weak accuracy requirement in practice.

A natural approach, adopted in~\citep{BRWP_2023,TT_BRWP}, is to estimate the score using kernel density estimation based on the particle ensemble $\{x_{k,j}\}_{j=1}^N$.
Classical results on score estimation \citep[e.g.][]{Wibisono2024OptimalSE,jiang2009general} can ensure high accuracy in low to moderate dimensions or when sufficiently many particles are available, though the estimation error typically deteriorates rapidly as the dimension increases.
When the resulting score approximation satisfies an $O(h^2)$ weak accuracy bound in the sense of Assumption~\ref{assump:SO-weak}, the convergence guarantees of Section~\ref{sec_conv_KL} apply without modification.
By contrast, if the numerical score is less accurate, the induced weak error may dominate the intrinsic discretization error of the BRWP scheme and thereby degrade the predicted convergence behavior.}

As an alternative to kernel density estimation, one may propagate an auxiliary density using the kernel formula \eqref{rho_T_BRWP} and evaluate the score from this density.
This avoids constructing an empirical density at each iteration and instead produces a sequence of deterministic densities
\begin{equation}
\label{prox_rho_k_sec_5}
    \widehat\rho_{k+1} = \mathcal{K}_V^{\,h}(\widehat\rho_k)\,,
    \qquad \widehat\rho_0 = \rho_0\,,
\end{equation}
which can be evaluated without particle sampling, for example, via tensor-based approximations.
Because the Gaussian factors in the kernel formula factorize across dimensions, the associated integrals admit efficient structured quadrature schemes. As a result, computation shifts to the evaluation of high-dimensional, tensorized integrals, whose complexity depends on the structural properties of the target distribution and the achievable tensor ranks.
This strategy avoids variance due to density estimation and is often numerically more stable than KDE; however, when iterated with a fixed stepsize~$h$, it is generally only weakly first-order accurate over long time horizons, as established in Corollary~\ref{cor:KV_non_refreshed}.

To increase the order of accuracy, we can improve the oracle using a Richardson-type combination of the kernel evaluations at two internal step sizes.
Given the kernel formula $\K_V^h$, Corollary~\ref{cor:KV_non_refreshed} shows that $\K_V^h\rho_k = \rho_{k+1} + O(h)$ in the weak sense. We therefore define the corrected density
\[
\widehat\rho_{k+1}^{\mathrm{rich}} =2\, \K_V^{h/2}\big(\K_V^{h/2}(\widehat\rho_k)\big)-\K_V^h(\widehat\rho_k)\,,
\]
which cancels the leading weak truncation error and yields a second-order weak approximation.

\begin{algorithm}[H]
\caption{BRWP with Richardson-corrected regularized Wasserstein proximal operator}
\label{algo_BRWP_richardson}
\begin{algorithmic}[1]
\State Initialize $\widehat\rho_0 = \rho_0$.
\For{$k = 0,1,2,\dots$}
    \State Compute
    $\K_V^{h/2}(\widehat\rho_k)$,
    $\K_V^{h/2}\!\left(\K_V^{h/2}(\widehat\rho_k)\right)$, and $\K_V^h(\widehat\rho_k)$ by tensor computation.
\State Form the Richardson-corrected next-step density
\[
    \widehat\rho_{k+1}
    = 2\, \K_V^{h/2}\!\left(\K_V^{h/2}(\widehat\rho_k)\right)
    - \K_V^h(\widehat\rho_k),
\]
and evaluate $\nabla\log\widehat\rho_{k+1}$.

    \For{each particle $j = 1,\dots,N$}
        \State
        \[
        x_{k+1,j}
        =
        x_{k,j}
        - h\Big(
             \nabla V(x_{k,j})
             + \beta^{-1}\nabla\log
               \widehat\rho_{k+1}(x_{k,j})
          \Big).
        \]
    \EndFor
\EndFor
\end{algorithmic}
\end{algorithm}
The Richardson extrapolation step is not positivity preserving in general.  In practice, this can be mitigated by choosing the stepsize $h$ sufficiently small, restricting the evaluation to a bounded domain, or applying a mild positivity-preserving regularization before computing $\nabla\log \widehat{\rho}_{k+1}$. 
A complete quantitative analysis of this construction needs an $O(h^{3})$ weak expansion of $\K_V^h$, which would require refining the analysis in Section~\ref{sec_kernel_approx}. 
Since our theoretical analysis focuses on the BRWP iterations under weak accuracy assumptions, we do not pursue these numerical safeguards or higher-order error estimates here. 

\blue{We summarize several settings in which the weak $O(h^{2})$ accuracy required in Assumption~\ref{assump:SO-weak} can be achieved:
\begin{enumerate}
    \item \emph{(Many particles).}
    A KDE-based approximation of the score computed from the particle ensemble
    $\{x_{k,j}\}_{j=1}^N$ requires
    \(N \gtrsim h^{-(d+4)}\) particles to attain weak $O(h^{2})$ accuracy
    \citep{Wibisono2024OptimalSE}.
    Although this scaling is unfavorable in high dimensions, the BRWP update is fully parallelizable, since the score can be evaluated independently at each particle.

    \item \emph{(Richardson-type correction).}
    Instead of approximating the score from particles, one may propagate an auxiliary density    $\widehat\rho_k$ using the kernel formula and form a Richardson-extrapolated update,
 as in Algorithm~\ref{algo_BRWP_richardson}.
    The resulting density $\widehat\rho_{k+1}$ provides a weakly second-order accurate approximation of the true next-step density $\rho_{k+1}$, and the score
    $\nabla\log\widehat\rho_{k+1}$ is used to advance the particles.


    \item \emph{(Laplace approximation).} 
    Since the kernel formula involves a heat kernel with a small parameter $h$, the integral is dominated, as $h \to 0$, by a neighborhood of the minimizer of the exponent. This observation can be made rigorous via a Laplace approximation. For example,
    \[
    \int_{\R^d} \exp\!\Big[-\frac{\beta}{2}\Big(V(z) + \frac{\|z - y\|_2^2}{2h}\Big)\Big]\,dz
    \;\approx\;
    C_d \exp\!\Big[-\frac{\beta}{2}\Big(V(y^*) + \frac{\|y^* - y\|_2^2}{2h}\Big)\Big]\,,
    \]
    where
    \[
    y^* \;=\; \arg\min_{z\in\R^d}\Big(V(z) + \frac{\|z - y\|_2^2}{2h}\Big)
    \;=\; \mathrm{prox}_V^h(y)\,.
    \]
    The approximation error admits an explicit expansion, and higher-order corrections are available within the same framework. This Laplace approximation reduces the original high-dimensional integral to a Gaussian integral around $y^*$, which can be evaluated efficiently in closed form.

    \item \emph{(Low-dimensional or parametric structure).}
 When the evolving density admits a low-dimensional or parametric representation (e.g., tensor trains, Gaussian mixtures, or normalizing flows), the score or the kernel operator $\K_V^h$ can often be evaluated directly from the representation, without relying on kernel density estimation.
 In such cases, the approximation error is governed primarily by the accuracy of the representation rather than by the ambient dimension, and weak $O(h^{2})$ accuracy may be achievable at a computational cost polynomial in the effective dimension. This makes Assumption~\ref{assump:SO-weak} feasible in practice.
\end{enumerate}}

\section{Numerical Experiments}
\label{sec_NE}

In this section, we present several numerical experiments to illustrate the theoretical results derived in the previous sections. All below numerical experiments are conducted in a $10$-dimensional sample space, i.e., \( \R^{d} \) with $d=10$ and we only plot the result in the first dimension for the purpose of presentation.

\noindent\textbf{Example 1.} In this example, we explore the evolution of density functions over several iterations using the kernel formula \eqref{rho_T_BRWP}. Consistent with Theorem~\ref{thm:PV-weak} and Corollary~\ref{cor:KV_non_refreshed}, we numerically demonstrate that the computed density converges to the target density under an appropriately chosen step size. To efficiently manage the high-dimensional integrations involved, we employ tensor train approximation for the density functions. Further details on this approach can be found in our previous work \citep{TT_BRWP}.

The first distribution we consider is a mixed Gaussian distribution defined as
\begin{equation}
\label{mixed_Gaussian}
    \rho^*(x) = \frac{1}{2(2\pi\sigma^2)^{d/2}} \left[\exp\left(-\frac{\|x-a\|_2^2}{2\sigma^2}\right) + \exp\left(-\frac{\|x+a\|_2^2}{2\sigma^2}\right)\right]\,,
\end{equation}
where \( a = (2,\cdots,2) \).

\begin{figure}[H]
    \centering
    \begin{subfigure}[t]{.3\linewidth}
        \includegraphics[width=\linewidth]{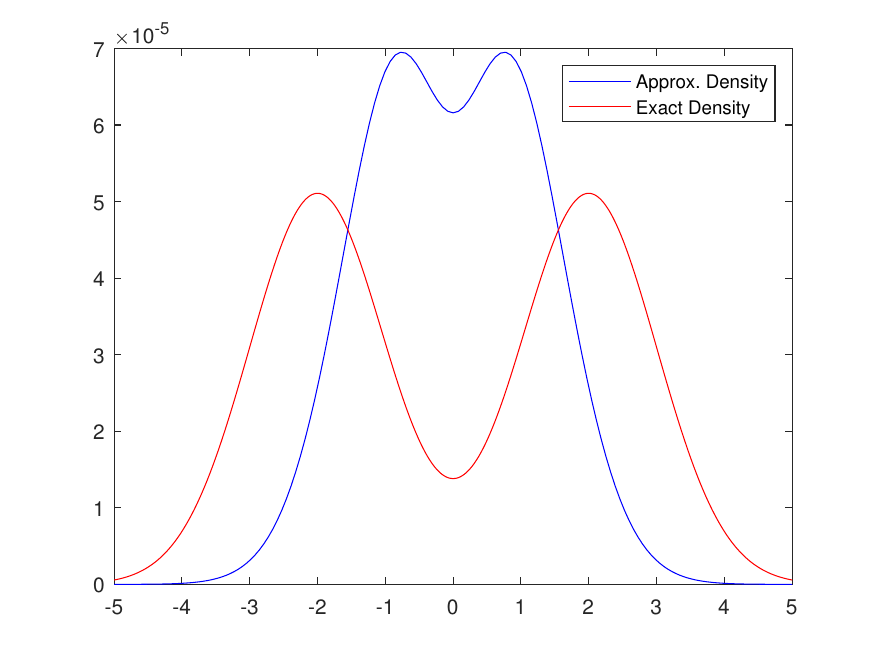}
        \caption{$h=0.01$, $20$ iterations.}
    \end{subfigure}
    \begin{subfigure}[t]{.3\linewidth}
        \includegraphics[width=\linewidth]{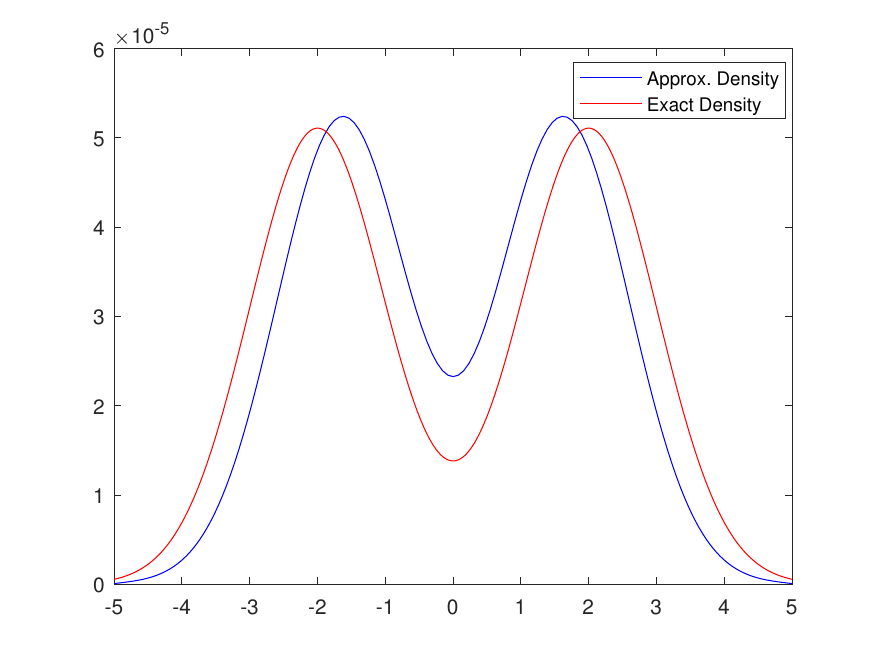}
        \caption{$h=0.01$, $100$ iterations.}
    \end{subfigure}        
    \begin{subfigure}[t]{.3\linewidth}
        \includegraphics[width=\linewidth]{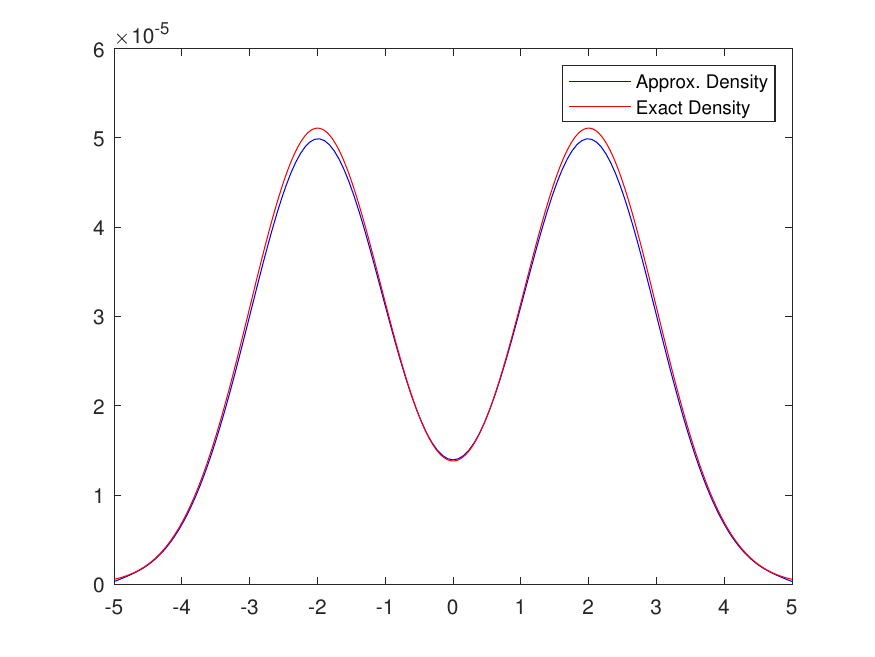}
        \caption{$h=0.01$, $400$ iterations.}
    \end{subfigure}\\
    \begin{subfigure}[t]{.3\linewidth}
        \includegraphics[width=\linewidth]{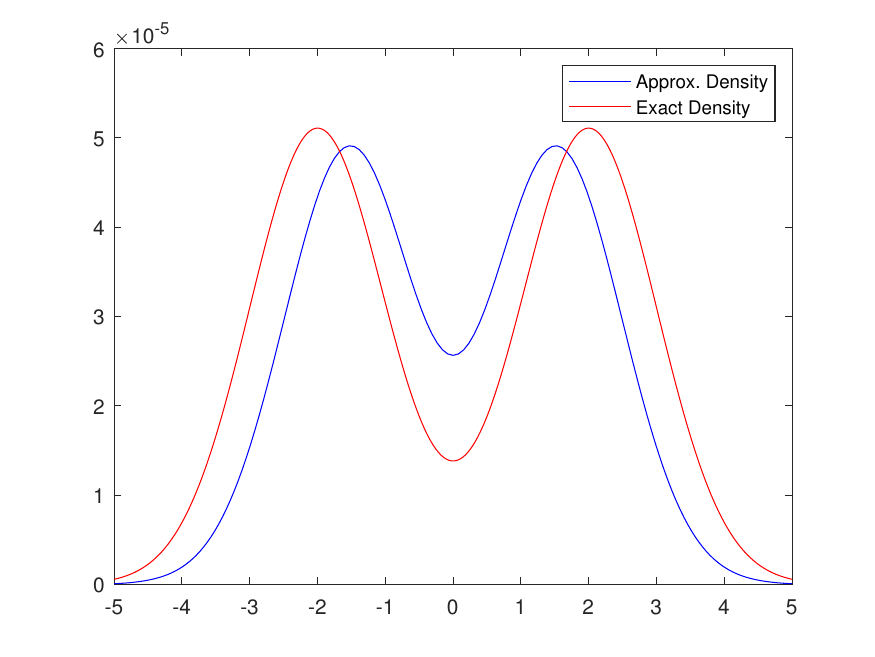}
        \caption{$h=0.05$, $20$ iterations.}
    \end{subfigure}
    \begin{subfigure}[t]{.3\linewidth}
        \includegraphics[width=\linewidth]{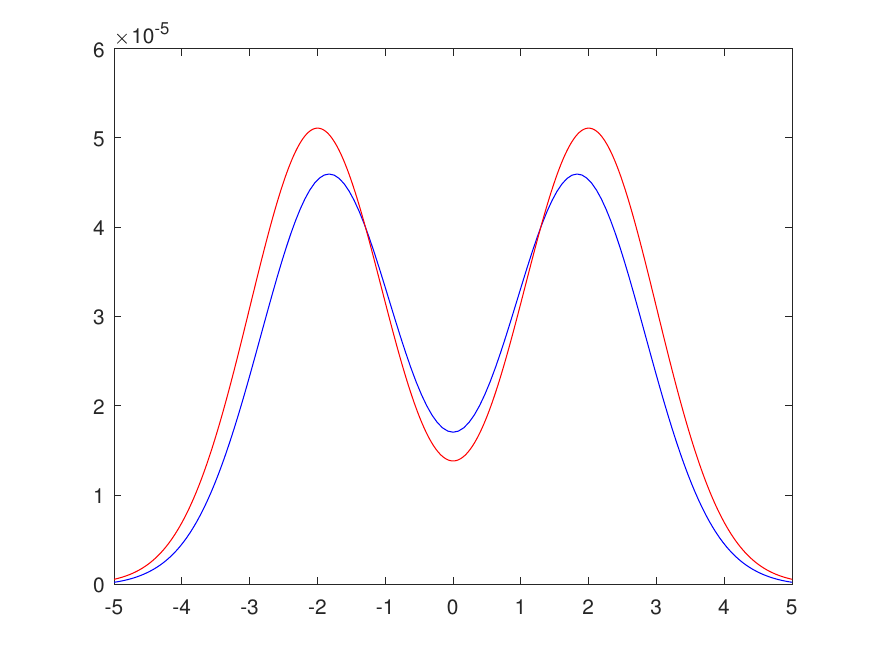}
        \caption{$h=0.05$, $40$ iterations.}
    \end{subfigure}
    \begin{subfigure}[t]{.3\linewidth}
        \includegraphics[width=\linewidth]{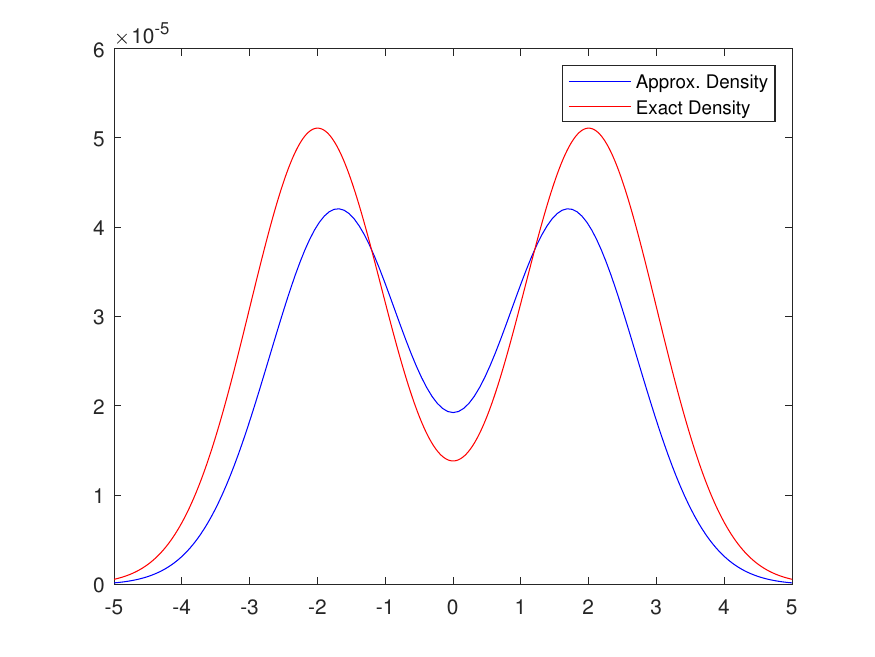}
        \caption{$h=0.1$, $15$ iterations.}
    \end{subfigure}
    \caption{Evolution of the density function (blue) with \eqref{rho_T_BRWP} for different stepsizes \( h \) for the first dimension. The initial density is \( \mathcal{N}(0,2) \). The target density is a mixed Gaussian (red).}
    \label{Fig:mixed_Gaussian_1}
\end{figure}

From Fig.\,\ref{Fig:mixed_Gaussian_1}, we observe that for sufficiently small stepsizes, the generated density converges to the target density very satisfactorily. Furthermore, comparing the first and last plots, we note that a larger time stepsize results in faster convergence while having a larger approximation error.

Next, we consider a mixture of \( L_1 \) and \( L_{1/2} \) norms, where
\[
\rho^*(x) = \frac{1}{Z}\left[\exp(-\|x+2\vec{e}_1\|_1) + \frac{1}{2} \exp(-\|x-2\vec{e}_1\|_{1/2}^{1/2})\right]\,,
\]
\( \vec{e}_1 \) is the vector with the first entry equal to 1 and all other entries equal to 0, and \( Z \) is the normalization constant.

\begin{figure}[H]
    \centering
    \begin{subfigure}[t]{.3\linewidth}
        \includegraphics[width=\linewidth]{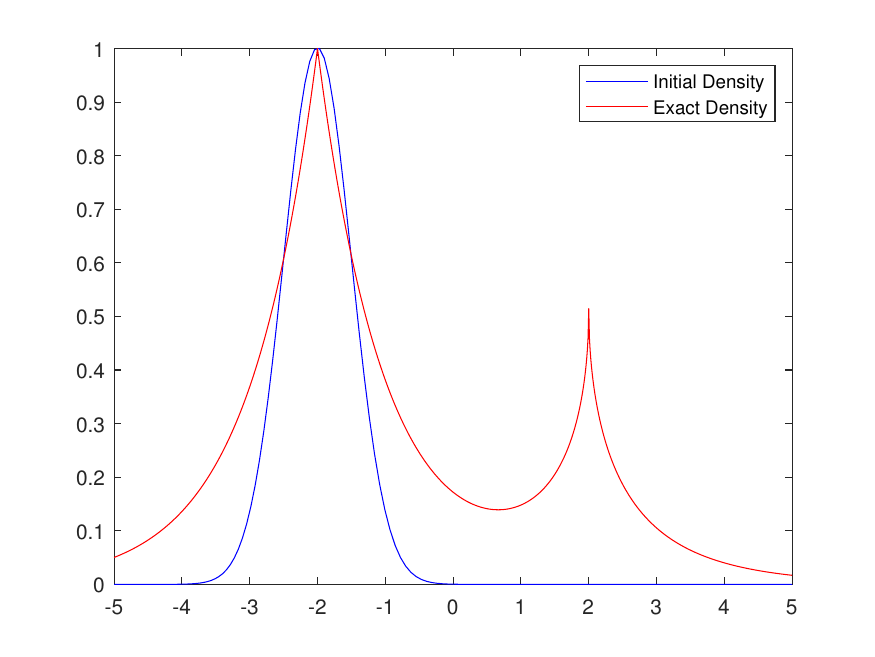}
        \caption{Initial distribution.}
    \end{subfigure}
    \begin{subfigure}[t]{.3\linewidth}
        \includegraphics[width=\linewidth]{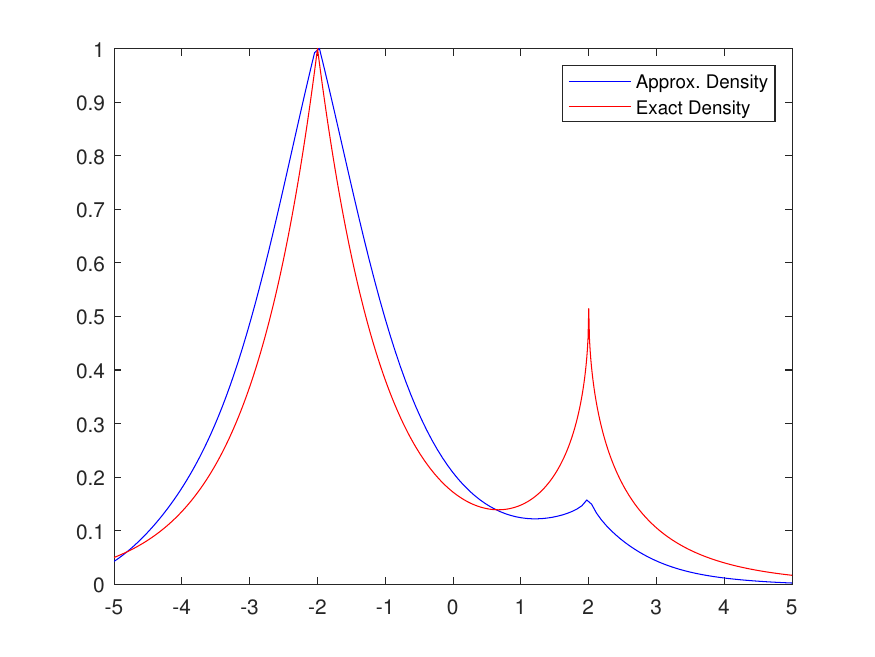}
        \caption{$h=0.05$, $20$ iterations.}
    \end{subfigure}
    \begin{subfigure}[t]{.3\linewidth}
        \includegraphics[width=\linewidth]{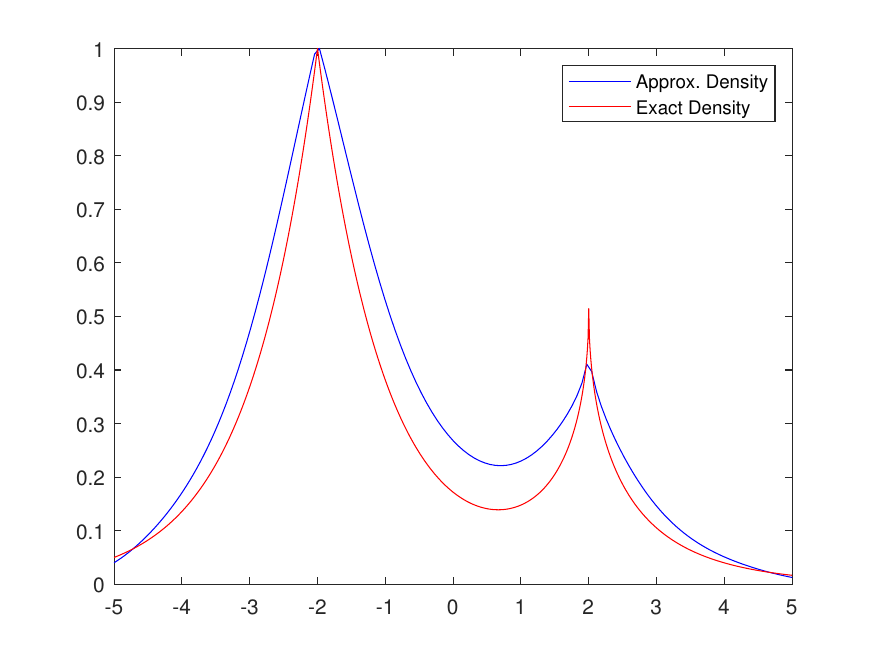}
        \caption{$h=0.05$, $50$ iterations.}
    \end{subfigure}
    \caption{Evolution of the density function (blue) for the first dimension with \eqref{rho_T_BRWP} as the target density with a mixture of \( L_1 \) and \( L_{1/2} \) norms (red).}
    \label{Fig:L1L12}
\end{figure}

From Fig.\,\ref{Fig:L1L12}, we observe that even for the non-smooth potential function, which exceeds the assumptions made in Section~\ref{sec_kernel_approx}, the density still converges to the target distribution satisfactorily using the kernel formula.
 
\noindent\textbf{Example 2:} 
In the second example, we assess the convergence of the BRWP algorithm by computing the score function based on the density function evolved using the regularized Wasserstein proximal operator, as outlined in Algorithm \ref{algo_BRWP_richardson}. We compare the performance of the BRWP method with that of the explicit Euler discretization of the probability flow ODE, where the score function at time \( t_k \) is approximated by a Gaussian kernel KDE from particles, the proximal Langevin algorithm in \citep{proximal_non_smooth}, and the ULA described in \eqref{def_ULA}.

The figure below shows the distribution of particles after 50 iterations for the mixed Gaussian distribution defined in \eqref{mixed_Gaussian}.

\begin{figure}[H]
    \centering
    \begin{subfigure}[t]{.24\linewidth}
        \includegraphics[width=\linewidth]{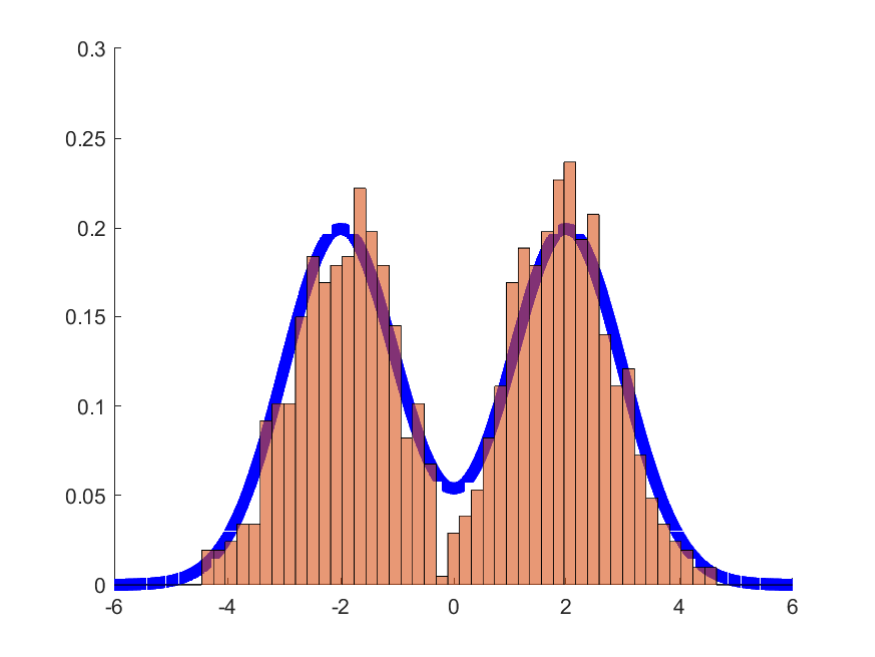}
        \caption{BRWP}
    \end{subfigure}
    \begin{subfigure}[t]{.24\linewidth}
        \includegraphics[width=\linewidth]{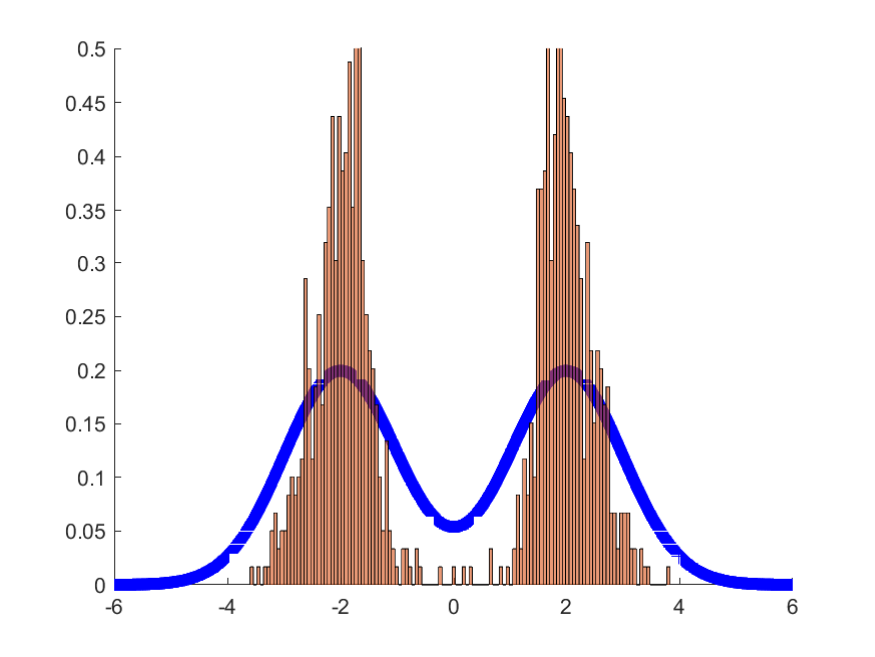}
        \caption{Explicit Euler}
    \end{subfigure}
        \begin{subfigure}[t]{.24\linewidth}
        \includegraphics[width=\linewidth]{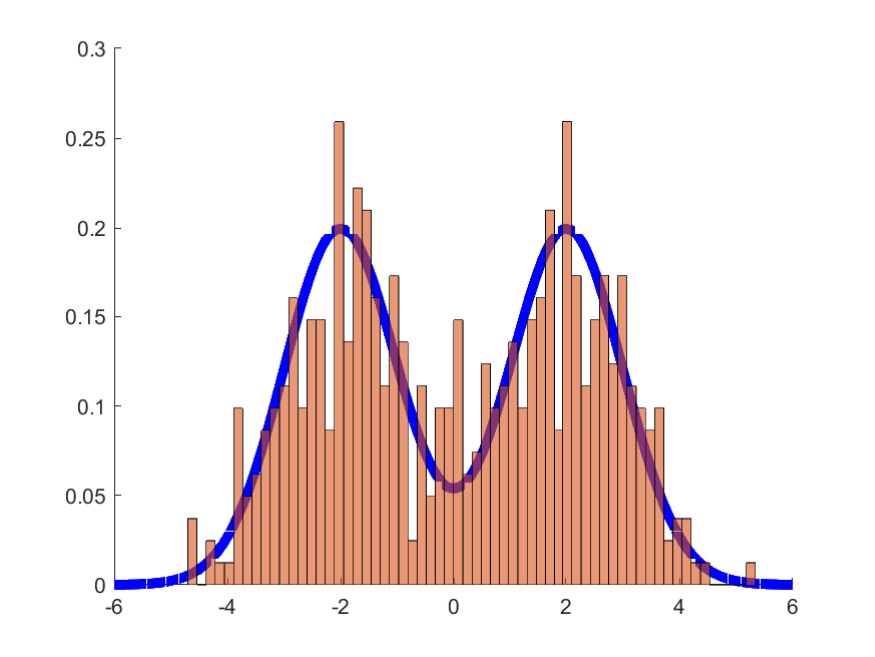}
        \caption{Proximal Langevin}
    \end{subfigure}
    \begin{subfigure}[t]{.24\linewidth}
        \includegraphics[width=\linewidth]{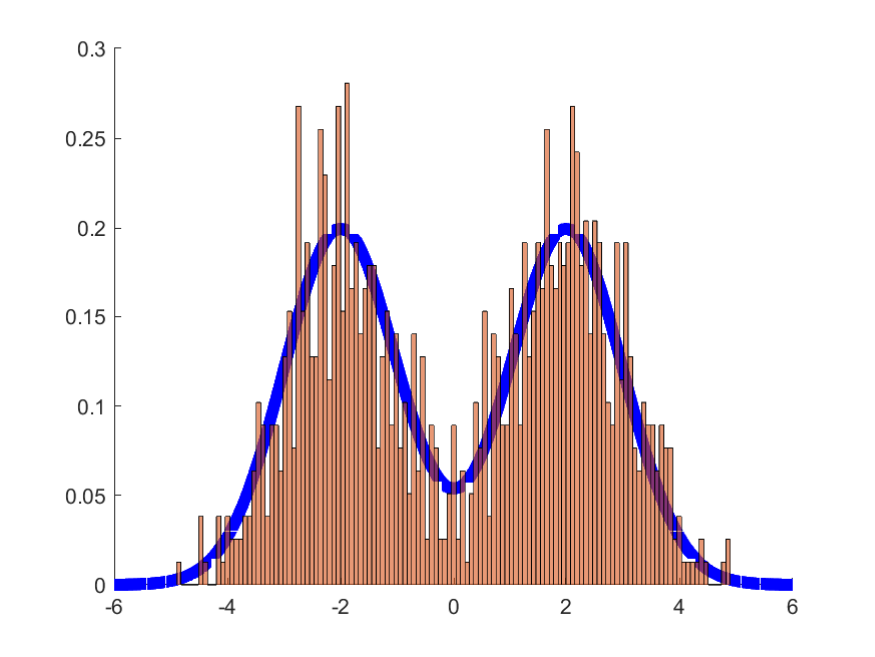}
        \caption{ULA}
    \end{subfigure}
    \caption{Histogram of 500 particles after 50 iterations in the first dimension for a Gaussian mixture distribution with $h  = 0.02$.}
    \label{Fig:mix_gaussian_sample}
\end{figure}

Comparing the first and second graphs in Fig.\,\ref{Fig:mix_gaussian_sample}, it is evident that the semi-implicit discretization (BRWP) improves the robustness of sampling and mitigates the variance collapse artifacts commonly observed in the explicit Euler discretization. \blue{Additionally, comparing the BRWP and proximal Langevin and ULA, we note that the BRWP algorithm provides a more accurate and structured approximation to the target distribution due to its noise-free nature.}

In the second experiment, we consider a mixture of Gaussian and Laplace distributions defined as
\[
\rho^*(x) = \frac{1}{Z}\left[\exp\left(-\frac{\|x-2\|_2^2}{2\sigma^2}\right) + \exp\left(-\frac{\|x+2\|_1}{2b}\right)\right]\,,
\]
where \( Z \) is the normalization constant.

\begin{figure}[H]
    \centering
    \begin{subfigure}[t]{.24\linewidth}
        \includegraphics[width=\linewidth]{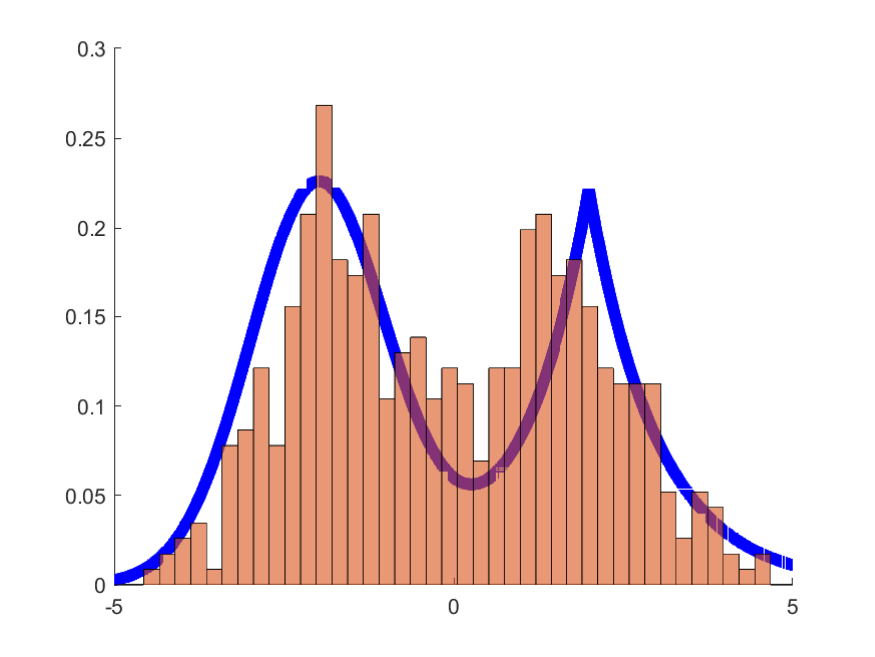}
        \caption{BRWP \\ 10 iterations}
    \end{subfigure}
    \begin{subfigure}[t]{.24\linewidth}
        \includegraphics[width=\linewidth]{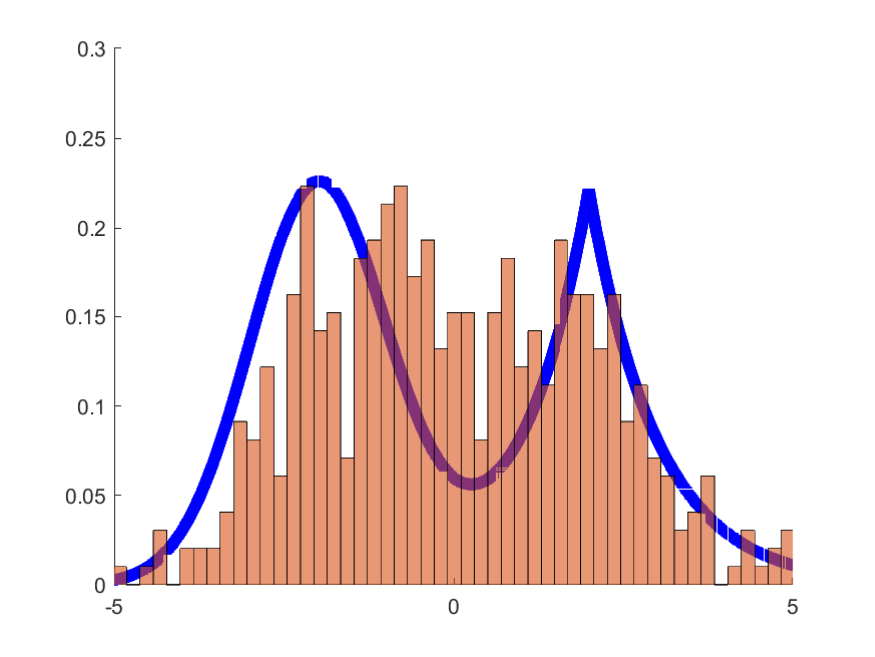}
        \caption{ULA \\ 10 iterations}
    \end{subfigure}
    \begin{subfigure}[t]{.24\linewidth}
        \includegraphics[width=\linewidth]{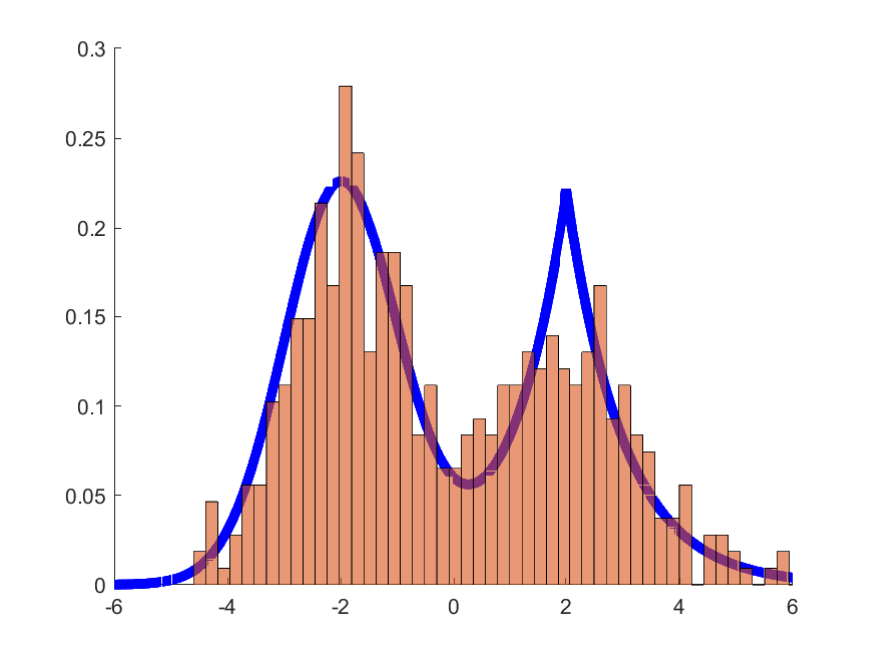}
        \caption{BRWP \\ 20 iterations}
    \end{subfigure}
    \begin{subfigure}[t]{.24\linewidth}
        \includegraphics[width=\linewidth]{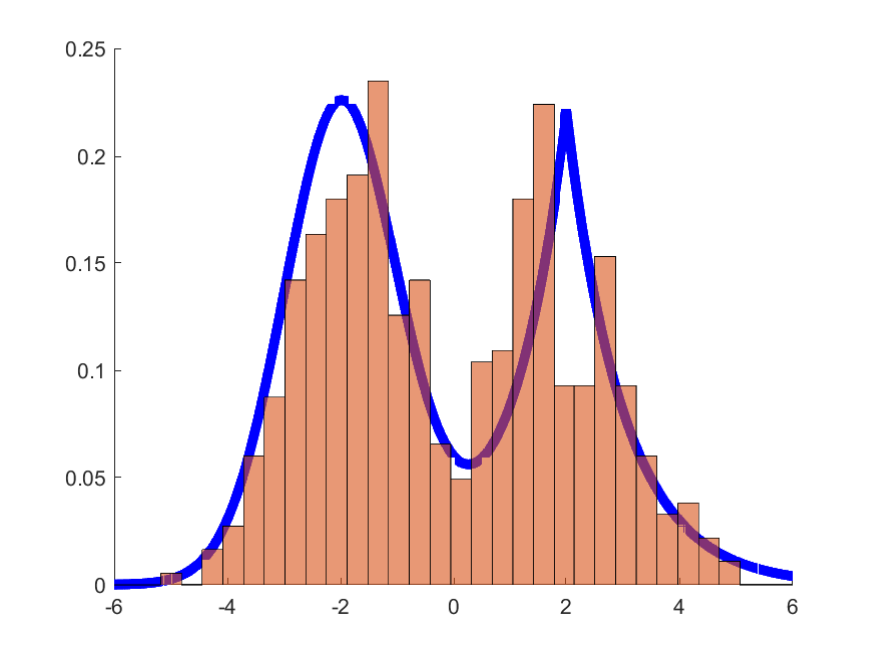}
        \caption{ULA \\ 20 iterations}
    \end{subfigure}
    \caption{Histogram of $500$ particles in the first dimension for the mixture of Gaussian and Laplace distributions with $h  = 0.02$.}
    \label{Fig:Gau_Lap_sampling}
\end{figure}

In Fig.\,\ref{Fig:Gau_Lap_sampling}, the TT-BRWP algorithm exhibits faster convergence to the target distribution compared to ULA, which is consistent with our previous theoretical results.

\section{Conclusion and discussion}

In this work, we present the convergence analysis of the BRWP algorithm, which is designed to sample from a known distribution up to a normalization constant. The algorithm relies on a semi-implicit time discretization of a deterministic probability flow ODE whose associated Liouville equation coincides with the Fokker–Planck equation of overdamped Langevin dynamics. To address the challenge of approximating the evolution of the density function, which usually involves intensive optimization, we apply the regularized Wasserstein proximal operator, whose solution is represented by a closed-form kernel formula. \blue{Using the weak formulation and Taylor expansion, we first demonstrate that the kernel formula serves as an approximation to the evolution of the Fokker-Planck equation. Given the noise-free nature of the new sampling algorithm, we then conduct a weak second-order numerical analysis to study the KL divergence convergence guarantee of the BRWP algorithm. 
The analysis shows that the semi-implicit structure of BRWP leads to enhanced dissipation in KL divergence and reduced discretization bias compared with fully explicit schemes, particularly when the kernel formula can be evaluated accurately.  }

Our BRWP sampling method can be viewed as an interacting particle system based on a kernel derived from the regularized Wasserstein proximal operator. This concept shares similar motivations with sampling algorithms from interacting particle systems proposed in \citep{stuart_Bayersian, carrillo2022consensus, reich2021fokker, carrillo2019blob,liu2016stein}. 
However, our use of these kernels is fundamentally different: in BRWP the kernel serves as a linear approximation of the Fokker–Planck semigroup, whereas classical interacting particle samplers employ nonlinear kernel interactions to define new particle dynamics. It is essential to highlight that a major practical challenge lies in accurately approximating the evolving density and efficiently evaluating the kernel formula, especially in high dimensions.

There are several interesting future directions to explore from this work. First, it would be valuable to extend the discussion to more general non-log-concave distributions and investigate the convergence of the algorithm. Second, the current analysis and numerical framework can be extended and generalized to different schemes, addressing challenges such as constrained sampling problems, sampling from group symmetric distributions, and sampling under different metrics. Lastly, we are also applying this approach to broader fields, including global optimization, time-reversible diffusion, and solving high-dimensional Hamilton-Jacobi equations.

\begin{appendix}

\blue{\section{Postponed Proofs and Lemmas for Section~\ref{sec_kernel_approx}}
\subsection{Postponed Proofs}
\label{sec_appendix_kernel}
In this Appendix, we list proofs of the main Lemmas and theorems for Section~\ref{sec_kernel_approx}.

\begin{proof}[Proof of Lemma~\ref{lem:PV-weak-local}] \textit{Step 1. Notation and setup.}
For $x\in\R^d$, write $\G_h[f(x,y)] = \int_{\R^d}G_h(y)f(x,y)dy$, then
\begin{equation}\label{eq:PT-split}
P_V^h\varphi(x)
=\varphi(x)
+\frac{N(x)}{Z(x)} \,,
\qquad
N(x):=\G_h\!\big[\psi(x+y)\,(\varphi(x+y)-\varphi(x))\big] \,,\quad
Z(x) =\G_h\!\big[\psi(x+y)\big] \,,
\end{equation}
where the Gaussian convolution is in the $y$–variable.

For all $x,y\in\R^d$,
\[
\varphi(x+y)-\varphi(x)
=\int_0^1 \nabla\varphi(x+\theta y)\!\cdot y\,d\theta \,,
\]
hence
\[
N(x)=\int_0^1 \G_h\!\big[\psi(x+y)\,(\nabla\varphi(x+\theta y)\!\cdot y)\big]\,d\theta \,.
\]

Let
\[
r_h:=\min\!\left\{\frac{r_0}{2},\,\sqrt{\frac{12}{\beta}h\log\!\frac{1}{h}}\right\}.
\]
A standard change of variables gives
\begin{equation}
\label{eqn:Gaussian_tail}
\int_{\{\|y\|>r_h\}} h^{-d/2}e^{-c_0\|y\|^2/h}\,dy
\;\le\;
C_d\Bigl(\log\!\frac{1}{h}\Bigr)^{\frac d2-1}\,h^3
\;\le\;
C_{d,h_0}\,h^2 \,.
\end{equation}
Since $h(\log\tfrac1h)^{d/2-1}$ is bounded for $ h \in (0,h_0]$,
\[
\G_h\big[\mathbf 1_{\{\|y\|>r_h\}}\big]\;\le\;C\,h^2 \,.
\]
Thus, tail contributions are $\mathcal O(h^2)$ and can be absorbed in the remainder term.  
For $x\in U$ we also have
\[
x+B(0,r_h)\subset U_r \,,
\]
so Gaussian integrals can be truncated to $B(0,r_h)$ with an $\mathcal O(h^2)$ error\,.

\noindent\textit{Step 2. Expansion and integral identities.}
For $x\in U$ and $y\in B(0,r_h)$,
\begin{equation}\label{eq:psi-Taylor}
\psi(x+y)
=\psi(x)+\nabla\psi(x)\!\cdot y
+\tfrac12 y^\top\nabla^2\psi(x)y
+\tfrac16\nabla^3\psi(x)[y,y,y]
+R_\psi(x,y) \,,
\end{equation}
with
\[
|R_\psi(x,y)|
\le C\,\|\psi\|_{C^{3,1}(U_r)}\,\|y\|^4 \,.
\]

Furthermore,
\begin{equation}\label{eq:D-expansion}
Z(x)=\G_h[\psi(x+y)]
=\psi(x)+h\beta^{-1}\Delta\psi(x)
+\mathcal O\!\big(h^2\|\psi\|_{C^{3,1}(U_r)}\big) \,.
\end{equation}
Since $V$ is bounded on $U$, the quantity $Z(x)$ is bounded below uniformly in $h$\,.

\noindent\textit{Step 3. Splitting by order in \(\psi\).}
Recall the Gaussian identities for $y\sim\mathcal N(0,2h\beta^{-1}I_d)$:
\[
\G_h[y]=0\,,\qquad
\G_h[yy^\top]=2h\beta^{-1}I_d\,,\qquad
\G_h[y_iy_jy_k]=0\,.
\]
For smooth $f$,
\begin{equation}
\label{eqn:stein}
\G_h[y\!\cdot\!f(y)]
=2h\beta^{-1}\,\G_h[\nabla_y\!\cdot f(y)] \,.
\end{equation}

Substituting \eqref{eq:psi-Taylor} into $N(x)$ gives
\[
N(x)=I_0(x)+I_1(x)+I_2(x)+I_3(x)+I_{\mathrm{rem}}(x) \,,
\]
where 
\begin{align*} I_0(x)&:=\psi(x)\int_0^1\G_h[\nabla\varphi(x+\theta y)\!\cdot y]\,d\theta\,,\\ 
I_1(x)&:=\int_0^1\G_h[(\nabla\psi(x)\!\cdot y)(\nabla\varphi(x+\theta y)\!\cdot y)]\,d\theta\,,\\ 
I_2(x)&:=\tfrac12\int_0^1\G_h[(y^\top\nabla^2\psi(x)y)(\nabla\varphi(x+\theta y)\!\cdot y)]\,d\theta\,,\\ 
I_3(x)&:=\tfrac16\int_0^1\G_h[\nabla^3\psi(x)[y,y,y](\nabla\varphi(x+\theta y)\!\cdot y)]\,d\theta\,,\\ 
I_{\mathrm{rem}}(x)&:=\int_0^1\G_h[R_\psi(x,y)(\nabla\varphi(x+\theta y)\!\cdot y)]\,d\theta\,. \end{align*}

Since $\G_h[\|y\|^4]=\mathcal O(h^2)$ and $\G_h[\|y\|^5]=\mathcal O(h^{5/2})$,
\[
|I_3(x)|+|I_{\mathrm{rem}}(x)|
\le
C\,h^2\,\|\psi\|_{C^{3,1}(U_r)}\,\|\nabla\varphi\|_{L^\infty(U)} \,.
\]

Set $p(x):=\frac{u(x)}{Z(x)}$, then
\begin{equation}
\langle I_2,p\rangle
=\frac12\!\int_0^1\!\!\int_U\!\int_{\R^d}
G_h(y)\,(y^\top\nabla^2\psi(x)y)\,
\big(\nabla\varphi(x+\theta y)\!\cdot y\big)\,dy\,p(x)\,dx\,d\theta \,.
\end{equation}

Integration by parts in $x$ gives
\[
\langle I_2,p\rangle
=-\frac12\!\int_0^1\!\!\int_U\!\int_{\R^d}
G_h(y)\,(y\otimes y\otimes y)
:\!\big(\varphi(x+\theta y)\,\nabla(\nabla^2\psi(x)p(x))\big)\,
dy\,dx\,d\theta \,.
\]

Using the evenness of $G_h$ and $\int G_h\|y\|^4\,dy=\mathcal O(h^2)$,
\[
|\langle I_2,p\rangle|
\le
C\,h^2\,
\|\psi\|_{C^{3,1}(U_r)}\,
\|\nabla\varphi\|_{L^\infty(U)}\,
\|u\|_{C^{0,1}(U)} \,.
\]

\noindent\textit{Step 4. Main terms \(I_0\) and \(I_1\).}
Define
\[
v(x):=\frac{\psi(x)u(x)}{Z(x)}\,,\qquad
w(x):=\frac{\nabla\psi(x)\,u(x)}{Z(x)} \,.
\]

\underline{$I_0$.}
Using \eqref{eqn:stein} with $F_\theta(y)=\nabla\varphi(x+\theta y)$,
\[
\G_h[\nabla\varphi(x+\theta y)\!\cdot y]
=2h\beta^{-1}\theta\,\G_h[\Delta\varphi(x+\theta y)] \,.
\]
Hence
\begin{align}
\langle I_0,p\rangle =& 2h\beta^{-1}\int_0^1\theta \int_{U}\G_h[\Delta\varphi(x+\theta y)]v(x)dx d\theta
\notag\\
= &-2h\beta^{-1}\!\int_0^1\!\theta\,
\G_h\left[\int_{U} \nabla\varphi(x+\theta y)\cdot\nabla v(x) dx\right]\,d\theta \,.
\end{align}

\underline{$I_1$.}
Similarly, we have
\begin{align}
\langle I_1,p\rangle
=&\!\int_0^1
\int_U\G_h[(y\cdot \nabla\varphi(x+\theta y))y]\cdot w(x)dx\,d\theta\notag\\
= &2h\beta^{-1}\int_0^1
\int_U\G_h[(\nabla_y\cdot(\nabla\varphi(x+\theta y))y)]\cdot w(x)dx\,d\theta \,.
\end{align}

Summing up and noting that the odd power in $h$ will vanish, we have
\begin{align*}
\langle I_0+I_1,p\rangle
&=-2h\beta^{-1}\!\int_0^1\!
\Bigl(\theta\,\langle\G_h[\nabla\varphi(x+\theta y)],\nabla v\rangle
-
\langle\G_h[\nabla\cdot(\nabla\varphi(x+\theta y)y)],w\rangle
\Bigr)d\theta \\
&=-h\beta^{-1}
\bigl(\langle\nabla\varphi,\nabla v\rangle
-2\langle\nabla\varphi,w\rangle\bigr)
+\mathcal O\!\big(h^2\|\varphi\|_{C^{2,1}(U)}\|u\|_{C^{0,1}(U)}\big) \,,
\end{align*}
using $\G_h[\nabla\varphi(x+\theta y)]
=\nabla\varphi(x)+\mathcal{O}(h)$ and $\int_0^1\theta\,d\theta=1/2$\,.

Substituting $v=\psi u/Z$ and $w=\nabla\psi\,u/Z$, and using $\nabla\log\psi=-\frac\beta2\nabla V$ together with \eqref{eq:D-expansion}, gives
\[
\langle I_0+I_1,p\rangle
=h\beta^{-1}\,\langle\Delta\varphi-\beta\,\nabla V\!\cdot\nabla\varphi,\,u\rangle
+\mathcal O\!\big(h^2\|\varphi\|_{C^{2,1}(U)}\|u\|_{C^{0,1}(U)}\big) \,.
\]

Therefore
\[
\big|\langle P_V^h\varphi-\varphi-h\mathcal L\varphi,\,u\rangle\big|
\le
C\,h^2\,\|\varphi\|_{C^{2,1}(U)}\,\|u\|_{C^{0,1}(U)} \,,
\]
which completes the proof of \eqref{eq:local-PV-weak}.
\end{proof}

\medskip

\begin{proof}[Proof of Theorem~\ref{thm:PV-weak}]
Fix a bounded open set $U\subset\R^d$.  Choose $\chi\in C_c^\infty(\R^d)$ with
\[
\chi\equiv1 \ \text{on } U,
\qquad
U\Subset\{\chi=1\}\Subset\mathrm{supp}\,\chi\Subset U_r\,,
\]
where $U_r$ is the fixed bounded domain used in the local weak expansion. Define the cutoff decompositions
\[
\varphi_{\mathrm{in}}:=\chi\varphi,\quad
\varphi_{\mathrm{out}}:=(1-\chi)\varphi,\qquad
u_{\mathrm{in}}:=\chi u,\quad
u_{\mathrm{out}}:=(1-\chi)u\,.
\]
Then
\[
\big\langle P_V^h\varphi-\varphi-h\mathcal L\varphi,\ u\big\rangle
=
\big\langle 
P_V^h\varphi_{\mathrm{in}}-\varphi_{\mathrm{in}}-h\mathcal L\varphi_{\mathrm{in}},
\ u_{\mathrm{in}}
\big\rangle
\;+\;E_{\mathrm{out}}\,,
\]
where $E_{\mathrm{out}}$ consists of all contributions in which at least one factor is supported in $U^c$.

\noindent\emph{Step 1. Local part and Gaussian tail.}
The local weak expansion on $U_r$ yields
\begin{equation}\label{eqn:local_bound_fixed}
\big|\langle 
P_V^h\varphi_{\mathrm{in}}-\varphi_{\mathrm{in}}-h\mathcal L\varphi_{\mathrm{in}},
\ u_{\mathrm{in}}
\rangle\big|
\le
C h^2 \|\varphi\|_{C^{2,1}(U_r)}\|u\|_{C^{0,1}(U_r)}\,.
\end{equation}

By assumption~\ref{assump:growth}, the weight ratio satisfies
\[
|\psi(x)\psi(y)^{-1}|
\le C(1+\|x\|^{q_V}+\|y\|^{q_V})\,.
\]
Using the kernel representation with $c_0\le Z(x)\le C_0$
\begin{align}
|P_V^h\varphi(x)|
= &\left|Z(x)^{-1}\psi(x)\int_{\R^d}G_h(x-y)\psi(y)^{-1}\varphi(y)dy\right|\notag\\ 
\leq & C\int_{\R^d}G_h(x-y)\bigl(1+\|x\|^{q_V}+\|y\|^{q_V}\bigr)|\varphi(y)|dy\,.
\label{kernel-growth}
\end{align}

Fix $p>2+q_V$, define $c_p = p-q_V > 2$, and choose a slightly larger radius than the one in the Proof of Lemma~\ref{lem:PV-weak-local}
\[
r_h:=\sqrt{\frac{8+2d+2c_p}{\beta}\,h\log\!\frac1h}\,, \qquad 0<h\le h_0\le e^{-1}\,.
\]
A direct computation gives the Gaussian tail estimate
\begin{equation}\label{eq:Gaussian_sup_tail}
\sup_{\|z\|\ge r_h}G_h(z)(1+\|z\|)^{-c_p}\le C\,h^2\,.
\end{equation}

\noindent\emph{Step 2. Cross terms.}
We will show that all near-field contributions with either $\varphi_{in}$ or $u_{in}$ are supported in a fixed bounded set.  
Since $\operatorname{dist}(\mathrm{supp}\{\chi\},U_r^c)\ge r_1>0$ and $r_h\le r_1/2$, if $y\in\mathrm{supp}\{\chi\}$ and $\|x-y\|\le r_h$, then $x\in U_r$ as well.
Thus, any such near-field term is a pairing of the form
\[
\langle P_V^h f - f - h\mathcal L f,\ g\rangle\,,
\qquad
\supp\{f\}\quad \supp\{g\}\subset U_r\,,
\]
with $f\in C^{2,1}(U_r)$ and $g\in C^{0,1}(U_r)$.  
By the local weak expansion,
\[
|\text{near region with one inner factor}|
\le
C h^2 \|\varphi\|_{C^{2,1}(U_r)}\|u\|_{C^{0,1}(U_r)}\,.
\]

\noindent\emph{Step 3. Far region $\{\|x-y\|>r_h\}$.}
Introduce the weighted functions
\[
f(x):=(1+\|x\|)^p |u(x)|,
\qquad
g(y):=(1+\|y\|)^p |\varphi(y)|,
\]
so that, by assumption, $\int f \le M_p$ and $\int g \le M_p$.
Using \eqref{kernel-growth} and Fubini,
\[
\begin{aligned}
J_{\mathrm{far}}
&:=\iint_{\{\|x-y\|>r_h\}}
G_h(x-y)(1+\|x\|^{q_V}+\|y\|^{q_V})|\varphi(y)|\,|u(x)|\,dx\,dy \\
&\le
M_p^2 \sup_{\|x-y\|>r_h}
G_h(x-y)W(x,y)\,,
\end{aligned}
\]
where
\[
W(x,y):=(1+\|x\|^{q_V}+\|y\|^{q_V})(1+\|x\|)^{-p}(1+\|y\|)^{-p}.
\]

\emph{Weight inequality.}
Let $a:=\|x\|$, $b:=\|y\|$, $r:=\|x-y\|$, and $R:=\max\{a,b\}$. A simple two–case estimate (depending on whether $\min\{a,b\}\ge R/2$ or not) yields
\[
W(x,y)\le C(1+r)^{-(p-q_V)} = C(1+r)^{-c_p}.
\]

Combining the above with \eqref{eq:Gaussian_sup_tail},
\[
J_{\mathrm{far}}
\le
C M_p^2 \sup_{\|z\|>r_h} G_h(z)(1+\|z\|)^{-c_p}
\le
C\,M_p^2\,h^2\,.
\]
All purely outer terms are treated identically, since the kernel bound \eqref{kernel-growth} and the weighted integrability of $u$ and $\varphi$
apply without using any local regularity.

Hence
\[
|E_{\mathrm{out}}| \leq C h^2 (\|\varphi\|_{C^{2,1}(U_r)}\|u\|_{C^{0,1}(U_r)}+M_p^2+1)\,.
\]

Combining \eqref{eqn:local_bound_fixed} with the bound for $E_{\mathrm{out}}$ proves \eqref{eq:global-PV-weak-fixedU-final} where we replace $U_r$ by $U$ as the set is arbitrary.
\end{proof}

\medskip

\begin{proof}[Proof of Lemma~\ref{Lem:KV_lift-C31}]
 \noindent\emph{Step 0. Heat kernel expansions.}
Since $V\in C^{4,1}(U_r)$, the weight $\psi=e^{-\beta V/2}$ satisfies 
\[
\|\psi^{\pm1}\|_{C^{3,1}(U_r)}\le C \,.
\]
Recall $Z = \G_h(\psi)$, as in \eqref{eq:D-expansion}, the classical heat kernel expansion gives
\[
\|Z - \psi - h\beta^{-1}\Delta\psi\|_{L^\infty(U_r)} \le C\,h^{2} \,.
\]
Since $\psi>0$, we also have  $0<c_0\le Z\le C_0$.

Define the multiplier
\[
w_h(x):=\psi(x)\,Z^{-1}(x)\,.
\]
Then
\begin{equation}
    w_h(x)=1-h\beta^{-1}\frac{\Delta\psi(x)}{\psi(x)}+O(h^{2}) \,,
\qquad 
\|w_h-1\|_{(C^{0,1}_0(U_r))^*}\le C\,h \,.
\label{eq:wh-Lip-C31}
\end{equation}

\noindent\emph{Step 1. Reduction to a perturbed Dirichlet problem.}

Write $u=\psi v$ \,.  
Using $\K_V^h f=\psi\,\mathcal G_h(Z^{-1} f)$,  
\eqref{eq:resolvent-eq-C31} becomes
\begin{equation}\label{eq:star-C31}
v-\mathcal G_h v
=
\mathcal G_h((w_h-1)v)
+\psi^{-1}\eta\mu \,.
\end{equation}

Let $G_h^D$ be the Dirichlet heat kernel on $U_r$ and let $\G_h^D$ be the convolution operator with kernel $G_h^D$. Write the boundary remainder as $R_h:=\mathcal G_h-\mathcal G_h^D$.  
By Lemma~\ref{lem:Dirichlet-Gaussian-derivative},
\begin{equation}\label{eq:Rh-operator-C31}
\|R_h\|_{L^1(U_r)\to C^{3,1}(U)}
\le 
C\,e^{-c r_0^2/h} \,,
\qquad r_0=\dist(U,\partial U_r) \,.
\end{equation}

Rewrite \eqref{eq:star-C31} as
\begin{equation}\label{eq:starstar-C31}
v-\mathcal G_h^D v
=
\mathcal G_h^D((w_h-1)v)
+R_h v
+R_h((w_h-1)v)
+\psi^{-1}\eta\mu \,.
\end{equation}

Define the Dirichlet resolvent
\[
\mathcal R_h^D
:=
(I-\mathcal G_h^D)^{-1}
=
\sum_{n\ge0}(\mathcal G_h^D)^n \,.
\]
By Lemma~\ref{lem:kernel-only-resolvent-C31},
\begin{equation}\label{eq:RD-bound-C31}
\|\mathcal R_h^D\nu\|_{C^{3,1}(U)}
\le 
C\,M_p(\nu) \,.
\end{equation}

\noindent\emph{Step 2. Perturbative inversion.}
Apply $\mathcal R_h^D$ to \eqref{eq:starstar-C31}:
\[
v
=
\mathcal A_h^{(0)} v+\mathcal A_h^{(1)} v+\mathcal A_h^{(2)} v
+\mathcal R_h^D(\psi^{-1}\eta\mu) \,,
\]
where
\[
\mathcal A_h^{(0)}:=\mathcal R_h^D\circ R_h \,,
\quad
\mathcal A_h^{(1)}:=\mathcal R_h^D\!\left(\mathcal G_h^D((w_h-1)\cdot)\right) \,,
\quad
\mathcal A_h^{(2)}:=\mathcal R_h^D\!\left(R_h((w_h-1)\cdot)\right) \,.
\]

Since
\[
 \int_{U_r} (1+\|x\|)^p \bigl|\mathcal G_h^D((w_h-1)v)(x)\bigr|\,dx
\le \|(w_h-1)v\|_\infty \int_{U_r}\!\int_{U_r} (1+\|x\|)^p G_h^D(x,y)\,dx\,dy
\le C h\|v\|_{C^{3,1}(U_r)}\,.
\]
Applying \eqref{eq:RD-bound-C31},
\begin{equation}\label{eq:A1-C31}
\|\mathcal A_h^{(1)}\|_{C^{3,1}(U_r)\to C^{3,1}(U)}
\le 
Ch\,.
\end{equation}

Next, from \eqref{eq:Rh-operator-C31},
\[
\|R_h((w_h-1)v)\|_{C^{3,1}(U)}
\le 
C\,h\,e^{-cr_0^2/h}\,\|v\|_{C^{3,1}(U_r)} \,.
\]
Thus
\begin{equation}\label{eq:A0A2-C31}
\|\mathcal A_h^{(0)}\|\le C e^{-c r_0^2/h} \,,
\qquad
\|\mathcal A_h^{(2)}\|\le C h e^{-c r_0^2/h} \,,
\end{equation}
as operators $C^{3,1}(U_r)\to C^{3,1}(U)$ \,.

Choose $h_0$ so small that
\[
C e^{-c r_0^2/h_0}\le\tfrac16 \,,
\qquad 
C h_0\le\tfrac16 \,,
\qquad 
C h_0 e^{-c r_0^2/h_0}\le\tfrac16 \,.
\]
Then, for all $h\in(0,h_0]$,
\[
\|\mathcal A_h^{(0)}+\mathcal A_h^{(1)}+\mathcal A_h^{(2)}\|
\le 
\tfrac12 \,.
\]

Thus, the Neumann expansion converges:
\[
v=\left(I-\mathcal A_h^{(0)}-\mathcal A_h^{(1)}-\mathcal A_h^{(2)}\right)^{-1}
\mathcal R_h^D(\psi^{-1}\eta\mu) \,,
\]
so that
\[
\|v\|_{C^{3,1}(U)}
\le 
2\,\|\mathcal R_h^D(\psi^{-1}\eta\mu)\|_{C^{3,1}(U)} \,.
\]

Using \eqref{eq:RD-bound-C31} and boundedness of $\psi^{-1}\eta$,
\begin{equation}\label{eq:v-final-C31}
\|v\|_{C^{3,1}(U)} \le C\,M_p(\mu) \,.
\end{equation}

\noindent\emph{Step 3. Return to \(u=\psi v\).}
Since $\psi\in C^{3,1}(U_r)$,
\[
\|u\|_{C^{3,1}(U)}
=\|\psi v\|_{C^{3,1}(U)}
\le 
C\,\|v\|_{C^{3,1}(U)}
\le 
C\,M_p(\mu) \,.
\]

This proves \eqref{eq:main-bound-C31}.
\end{proof}

\begin{proof}[Proof of Theorem~\ref{thm:uniform-C31}]
\noindent\emph{Step 1. Strong control of $\epsilon_k$ \,.}
We apply Lemma~\ref{Lem:KV_lift-C31} with $\mu=\epsilon_k$ and $\eta\equiv1$ on $U$ \,.  
This gives a solution $u_k\in C^{3,1}(U)$ to 
\[
u_k-\K_V^h u_k=\epsilon_k\qquad\text{on }U \,,
\]
satisfying
\begin{equation}\label{eq:uk-bound-C31}
\|u_k\|_{C^{3,1}(U)}
\le 
CM_p(\epsilon_k)\,.
\end{equation}

By Lemma~\ref{lem:Kv-rhok-C31}, we have
\begin{equation}\label{eq:KV-smoothing-C31-main}
\|\K_V^h f\|_{C^{3,1}(U)}
\le C\,\|f\|_{C^{3,1}(U)}\,.
\end{equation}

Because $\epsilon_k = u_k -\K_V^h u_k$ on $U$, combining 
\eqref{eq:uk-bound-C31}–\eqref{eq:KV-smoothing-C31-main} gives
\begin{equation}\label{eq:eps-strong-C31}
\|\epsilon_k\|_{C^{3,1}(U)}
\le 
\|u_k\|_{C^{3,1}(U)}+\|\K_V^h u_k\|_{C^{3,1}(U)}
\le 
C\,\|u_k\|_{C^{3,1}(U)}
\le CM_p(\epsilon_k)\,.
\end{equation}

\noindent\emph{Step 2. Uniform $C^{3,1}$ bound for the kernel chain $\widehat\rho_k$ \,.}
From Lemma~\ref{lem:moment-p-discrete}, we have $\sup_{k\ge0}M_p(\widehat\rho_k)\le C$.
Applying Lemma~\ref{Lem:KV_lift-C31} to $\mu=\eta\widehat\rho_k$ yields
$v_k\in C^{3,1}(U)$ with
\[
v_k-\K_V^h v_k=\eta\widehat\rho_k\qquad\text{on }U \,,
\]
and
\begin{equation}\label{eq:vk-bound-C31}
\|v_k\|_{C^{3,1}(U)}
\le
C\,M_p(\widehat\rho_k)
\le C' \,.
\end{equation}

Using again \eqref{eq:KV-smoothing-C31-main},
\begin{equation}\label{eq:hatrho-strong-C31}
\|\widehat\rho_k\|_{C^{3,1}(U)}
=
\|v_k-\K_V^h v_k\|_{C^{3,1}(U)}
\le C\,\|v_k\|_{C^{3,1}(U)}
\le C' \,.
\end{equation}

\noindent\emph{Step 3. Closing the recursion and uniform bound.}
By Lemma~\ref{lem:moment-p-discrete}, we have
\[
\sup_{k\ge 0} M_p(\rho_k) + M_p(\widehat\rho_k) \le C\,,
\]
hence
\[
M_p(\epsilon_k)
= M_p(\rho_k - \widehat\rho_k)
\le M_p(\rho_k) + M_p(\widehat\rho_k)
\le C
\]
for all $k\ge 0$.  Combining this with \eqref{eq:eps-strong-C31} gives
\begin{equation}\label{eq:eps-C31-uniform}
\sup_{k\ge 0} \|\epsilon_k\|_{C^{3,1}(U)} \le C\,.
\end{equation}
Together with \eqref{eq:hatrho-strong-C31}, we obtain a uniform $C^{3,1}$ bound for
$\rho_k$:
\begin{equation}\label{eq:rho-C31-uniform}
\|\rho_k\|_{C^{3,1}(U)}
\le
\|\widehat\rho_k\|_{C^{3,1}(U)} + \|\epsilon_k\|_{C^{3,1}(U)}
\le C\,,
\qquad k\ge 0\,.
\end{equation}
\end{proof}

\begin{proof}[Proof of Theorem~\ref{thm:weak-score-drift}]
Fix $k\ge0$ and assume first that $\supp\{w\}\subset U$\,.
The general case follows from a cutoff argument exactly as in
Theorem~\ref{thm:PV-weak}.

From Theorem~\ref{thm:uniform-C31}, Lemma~\ref{lem:PV-regularization}, Lemma~\ref{lem:Kv-rhok-C31}, and Lemma~\ref{lem:local-positivity}
\[
0<c\le \rho_k(x),\,\K_V^h\rho_k(x),\,\K_V^h\tilde\rho_k(x)\le C
\quad\text{and}\quad
\rho_k,\,\K_V^h\rho_k,\,\K_V^h\tilde\rho_k\in C^{3,1}(U)\,.
\]

For notational sake, set
\[
r:=\rho_k,\qquad
r_K:=\K_V^h\rho_k,\qquad
r_*:=r+h\,\mathcal L^*r\,.
\]
Then we have
\begin{align}
&\int
w\!\cdot\!\Bigl[\nabla\log(\K_V^h\tilde\rho_k) - \nabla\log\rho_k-h\nabla\!\Bigl(\tfrac{\mathcal L^*\rho_k}{\rho_k}\Bigr)\Bigr]\rho_k\,dx\\
 =&\int w\!\cdot\!\Bigl[\nabla\log r_K-\nabla\log r-h\nabla\!\Bigl(\tfrac{\mathcal L^*\rho_k}{\rho_k}\Bigr)\Bigr]\rho_k\,dx 
 +\int w\!\cdot\bigl[\nabla\log(\K_V^h\tilde\rho_k)-\nabla\log(\K_V^h\rho_k)\bigr]\rho_k\,dx \notag
\label{eq:I-decomp-fixed}
\end{align}
and we denote the first and second term as $I^{0}$ and $I^{1}$.

\noindent\emph{Step 1. Ideal kernel part $I^{0}$.}
Insert and subtract $\nabla\log r_*$:
\begin{align}
I^{0}&= \int w\!\cdot(\nabla\log r_K-\nabla\log r_*)\,r\,dx\;+\; \int w\!\cdot
\Bigl(\nabla\log r_*- \nabla\log r
      -h\nabla\!\Bigl(\tfrac{\mathcal L^*r}{r}\Bigr)\Bigr)\,r\,dx \\
&=: I^{0}_{a}+I^{0}_{b}\,.\notag
\label{eq:I0-split-clean}
\end{align}

Using
\[
\nabla\log r_K-\nabla\log r_*
=
\frac{\nabla(r_K-r_*)}{r_*}
+
\nabla r_K\,\frac{r_*-r_K}{r_Kr_*}\,,
\]
we write
\begin{align*}
I^{0}_{a} = 
&\int w\!\cdot\!\frac{\nabla(r_K-r_*)}{r_*}\,r\,dx + \int w\!\cdot\nabla r_K \frac{r_*-r_K}{r_K r_*}\,r\,dx \\
=&-\!\int \nabla\!\cdot\!\Bigl(\frac{wr}{r_*}\Bigr)(r_K-r_*)\,dx-
\ \!\int\Bigl(w\!\cdot\nabla r_K\,\frac{r}{r_K r_*}\Bigr)(r_K-r_*)\,dx\,.
\end{align*}

Since $r,r_K,r_*\in C^{1,1}(U)$ and $r_*,r_K\ge c>0$,
both coefficients
\[
\nabla\!\cdot\!\Bigl(\frac{wr}{r_*}\Bigr),
\qquad
w\!\cdot\nabla r_K\,\frac{r}{r_K r_*}
\]
lie in \(C^{0,1}(U)\) with norms
\[
\left\|\nabla\!\cdot\!\Bigl(\frac{wr}{r_*}\Bigr)\right\|_{C^{0,1}(U)}
+
\left\|w\!\cdot\nabla r_K\,\frac{r}{r_K r_*}\right\|_{C^{0,1}(U)}
\le C\,\|w\|_{C^{1,1}(U)}\,.
\]

Combining the above with the weak approximation in Theorem~\ref{thm:PV-weak} implies
\begin{equation}\label{eq:I0a-done}
|I^{0}_{a}|
\le
C h^2\,\|w\|_{C^{1,1}(U)}\,.
\end{equation}

For $I^{0}_{b}$, Taylor expansion of
\(
\log(r+\theta h\,\mathcal L^*r)
\)
in $\theta$ yields
\[
\nabla\log r_* - \nabla\log r
=
h\,\nabla\!\Bigl(\tfrac{\mathcal L^*r}{r}\Bigr)
+h^2 R(x)\,,
\qquad \|R\|_{C^0(U)}\le C\,.
\]
Thus
\begin{equation}\label{eq:I0b-done}
|I^{0}_{b}|
\le
C h^2 \|w\|_{L^\infty(U)}
\le
 C h^2 \|w\|_{C^{1,1}(U)}\,.
\end{equation}

From \eqref{eq:I0a-done}–\eqref{eq:I0b-done},
\begin{equation}\label{eq:I0-final}
|I^{0}|
\le
C h^2\|w\|_{C^{1,1}(U)}\,.
\end{equation}

\noindent\emph{Step 2. Oracle perturbation $I^{1}$.}
Since $\K_V^h$ is $C^{3,1}$-smoothing,
\begin{equation}\label{eq:Kh-strong}
\|\K_V^h\tilde\rho_k-\K_V^h\rho_k\|_{C^1(U)}
\le
C\|\tilde\rho_k-\rho_k\|_{C^{2,1}(U)}\,.
\end{equation}
The score map $\rho\mapsto\nabla\log\rho$ is \(C^1\) on the set
\(\{f\in C^{2,1}(U): c\le f\le C\}\), hence
\begin{equation}\label{eq:score-Lip}
\|\nabla\log(\K_V^h\tilde\rho_k)
     -\nabla\log(\K_V^h\rho_k)\|_{C^0(U)}
\le
C'\,\|\K_V^h\tilde\rho_k-\K_V^h\rho_k\|_{C^1(U)}\le
C\,\|\tilde\rho_k-\rho_k\|_{C^{2,1}(U)}\,.
\end{equation}

Thus
\begin{equation}\label{eq:I1-done}
|I^{1}|
\le
C\|w\|_{L^\infty(U)}\|\tilde\rho_k-\rho_k\|_{C^{2,1}(U)}
\le
C\|w\|_{C^{1,1}(U)}\|\tilde\rho_k-\rho_k\|_{C^{2,1}(U)}\,.
\end{equation}

Combining \eqref{eq:I0-final} and \eqref{eq:I1-done} gives the desired result.

If $\supp\{w\}\nsubseteq U$, choose $\chi\in C_c^\infty(U_r)$ with
$\chi\equiv1$ on $U$ and replace $w$ by $\chi w$\,; this leaves the integral
unchanged and only modifies $C$ by a universal factor. 
\end{proof}

\subsection{Postponed Lemmas}

In this Appendix, we verify the corresponding assumptions and
establish uniform bounds for the quantities appearing in Theorem~\ref{thm:PV-weak} and \ref{thm:uniform-C31},
including the uniform \(p\)-moment bounds in Section~\ref{subsec:p-moment}, estimates involve a Gaussian kernel in Section~\ref{subsec:Gaussian_estimation}, boundedness of the kernel formula in Section~\ref{subsec:stability}, and local positivity of discrete iterates in Section~\ref{subsec:local-positivity}.

\subsubsection{Finite \texorpdfstring{$p$}{p}–moment bound}
\label{subsec:p-moment}

We will first establish a uniform $p$–moment estimate for the discrete propagation
\(\rho_{k+1}=(F_k)_{\#}\rho_k\) which guarantees that all iterates of the discrete kernel remain integrable, and we can truncate the integration to a bounded domain which is required in the proof of Theorem~\ref{thm:PV-weak}.
\begin{lemma}[Uniform discrete $p$–moment bound for the BRWP iteration]
\label{lem:moment-p-discrete}
Let $p \ge 2$. Then there exist constants $h_0\in(0,1]$, $a_p,c_p\in(0,1)$, and $b_p,d_p<\infty$, depending only on $(\alpha,\beta,d,p,\|V\|_{C^{3,1}})$, such that for all $h\in(0,h_0]$ and all $k\ge0$,
\begin{equation}\label{eq:disc-moment-p-transport}
\int_{\R^d}\|x\|^p\,\rho_{k+1}(x)\,dx
\;\le\; (1-a_p h)\!\int_{\R^d}\|y\|^p\,\rho_k(y)\,dy\;+\;b_p h\,,
\end{equation}
and
\begin{equation}\label{eq:disc-moment-p-kernel}
\int_{\R^d}\|x\|^p\,(\K_V^h\widehat\rho_k)(x)\,dx
\;\le\;
(1-c_ph)\!\int_{\R^d}\|y\|^p\,\widehat\rho_k(y)\,dy\;+\;d_p h\,.
\end{equation}

Consequently,
\begin{equation}\label{eq:moment-uniform-transport}
\sup_{k\ge0}\int_{\R^d}\|x\|^p\,\rho_k(x)\,dx
\;\le\;\max\!\Big\{\int\|x\|^p\rho_0(x)\,dx,\,\frac{b_p}{a_p}\Big\}
<\infty\,,
\end{equation}
and
\begin{equation}
\sup_{k\ge0}\int_{\R^d}\|x\|^p\,\widehat\rho_k(x)\,dx
\;\le\;\max\!\Big\{\int\|x\|^p\rho_0(x)\,dx,\,\frac{d_p}{c_p}\Big\}
<\infty\,.
\end{equation}
\end{lemma}

\begin{proof}
Fix $p\ge2$. Constants denoted by $C$ may vary from line to line and depend only on $(\alpha,\beta,d,p,\|V\|_{C^{3,1}})$.

\noindent\emph{Step 1. Score decomposition and identities.}
We begin with the identity
\[
\log(\K_V^h\rho_k)
= \log \psi + \log\!\big(\mathcal G_h(Z^{-1}\rho_k)\big)\,.
\]
Since $\nabla\log \psi = -(\beta/2)\nabla V$, it follows that
\[
\nabla V + \beta^{-1}\nabla\log(\K_V^h\rho_k)
= \tfrac12\,\nabla V + \beta^{-1}\nabla\log\!\big(\mathcal G_h(Z^{-1}\rho_k)\big)\,,
\]
and therefore
\begin{equation}\label{eq:Fk-decompose}
F_k(x)
= x - \frac{h}{2}\,\nabla V(x)
  - h\beta^{-1}\nabla\log\!\big(\mathcal G_h(Z^{-1}\rho_k)\big)(x)\,.
\end{equation}

For any $f\ge0$,
\[
\nabla\log(\mathcal G_h f)(x)
=
\frac{-\beta}{2h}\,
\frac{\displaystyle\int_{\R^d} (x-y)\,G_h(x-y)\,f(y)\,dy}
     {\displaystyle\int_{\R^d} G_h(x-y)\,f(y)\,dy}
= -\beta\,\frac{x - m_f(x)}{2h}\,,
\]
where
\[
m_f(x)
:= \frac{\displaystyle\int_{\R^d} y\,G_h(x-y)\,f(y)\,dy}
          {\displaystyle\int_{\R^d} G_h(x-y)\,f(y)\,dy}
\]
is the local Gaussian barycenter.  Applying this with $f=Z^{-1}\rho_k$ gives
\begin{equation}\label{eq:Fk-explicit}
F_k(x)
= x - \frac{h}{2}\nabla V(x)
  + \frac{1}{2}\big(x - m_{\rho_k/Z}(x)\big)\,.
\end{equation}

Using Cauchy--Schwarz under the normalized weight proportional to
$G_h(x-y)\,Z^{-1}(y)\rho_k(y)$,
\[
\|x - m_{\rho_k/Z}(x)\|^2 \le
\frac{\displaystyle\int_{\R^d} \|x-y\|^2 G_h(x-y)\,Z^{-1}(y)\rho_k(y)\,dy}
     {\displaystyle\int_{\R^d} G_h(x-y)\,Z^{-1}(y)\rho_k(y)\,dy}\,.
\]

It implies the following averaged estimate:
\begin{equation}
\label{eq:barycenter-dev-avg}
\int_{\R^d}\|x-m_{\rho_k/Z}(x)\|^p\,\rho_k(x)\,dx
\;\le\;
C\,h^{p/2}\,,
\qquad p\ge2\,,
\end{equation}
where $C$ depends only on $(d,p,\beta,\|V\|_{C^{3,1}})$.

\noindent\emph{Step 2. Drift contraction.}
Combining \eqref{eq:Fk-explicit} and \eqref{eq:barycenter-dev-avg}, let $\Lambda_k(x) := \beta(x - m_{\rho_k/Z}(x))$. Then
\[
F_k(x)
= x - \frac{h}{2}\nabla V(x)
  + \frac{1}{2\beta}\Lambda_k(x)\,,\quad \int \|\Lambda_k(x)\|^p\rho_k(x)dx \leq Ch^{p/2}\,.\]

Let $\bar x$ denote a minimizer of $V$, so $\nabla V(\bar x)=0$.  
By the mean-value theorem,
\[
\nabla V(x) - \nabla V(\bar x)
= H_\xi(x-\bar x)\,,
\quad
H_\xi:=\int_0^1 \nabla^2V(\bar x+t(x-\bar x))\,dt\,.
\]
Thus,
\[
x - \tfrac{h}{2}\nabla V(x) - \bar x
= \big(I - \tfrac{h}{2}H_\xi\big)(x-\bar x)\,.
\]

Since $\alpha I_d \preceq H_\xi \preceq L I_d$ on bounded sets, choose $h_0\le \min\{1,2/L\}$ so that
\[
\|I - \tfrac{h}{2}H_\xi\| \le 1 - \tfrac{\alpha h}{2}\,.
\]
Therefore,
\begin{equation}\label{eq:drift-contraction}
\big\|x - \tfrac{h}{2}\nabla V(x) - \bar x\big\|
\le (1 - \tfrac{\alpha h}{2})\,\|x-\bar x\|\,.
\end{equation}

Raising the inequality to the $p$–th power and using $(1-\theta)^p\le 1-\tfrac{(p-1)\theta}{2}$ gives there exists $a_p>0$, such that
\begin{equation}\label{eq:drift-contraction-p}
\big\|x - \tfrac{h}{2}\nabla V(x) - \bar x\big\|^p
\le (1-a_p h)\|x-\bar x\|^p\,.
\end{equation}

\noindent\emph{Step 3. Adding the fluctuation.}
Write
\[
F_k(x) - \bar x
= U(x) + W(x)\,,
\qquad
U(x) := x - \tfrac{h}{2}\nabla V(x) - \bar x,\quad
W(x) := \tfrac{1}{2\beta}\Lambda_k(x)\,.
\]
For $p\ge 2$ and $\varepsilon\in(0,1]$, Young's inequality gives
\[
\|U + W\|^p
\le (1+\varepsilon)\|U\|^p + C_p\varepsilon^{1-p}\|W\|^p\,.
\]
Choose $\varepsilon = 1$, integrate against $\rho_k$, and apply \eqref{eq:drift-contraction-p} together with \eqref{eq:barycenter-dev-avg} to obtain
\begin{equation}\label{eq:transport-local}
\|F_k(x) - \bar x\|^p
\le (1 - a_p h)\|x - \bar x\|^p + C_p h\,.
\end{equation}

Since $\|x\|^p\le 2^{p-1}\big(\|x-\bar x\|^p + \|\bar x\|^p\big)$, integrating \eqref{eq:transport-local} against $\rho_k$ (and noting the $\|\bar x\|^p$ term is constant) yields
\[
\int \|F_k(x)\|^p\rho_k(x)dx
\le (1 - a_p h)\int \|x\|^p\rho_k(x)dx + b_p h\,,
\qquad
b_p := C_p(1+\|\bar x\|^p)\,.
\]
Because $\rho_{k+1} = (F_k)_\#\rho_k$, the left-hand side equals $\int \|x\|^p\rho_{k+1}(x)dx$, giving \eqref{eq:disc-moment-p-transport}.  
Iterating proves \eqref{eq:moment-uniform-transport}.

\noindent\emph{Step 4. $p$–moment for the kernel iteration.}
Let $p\ge2$ and set $r:=\max\{p,q_V+2\}$.  
Let $\Phi\in C^{3,1}(\R^d)$ be a smooth radial Lyapunov function defined as $\Phi(x)
=(1+\|x\|^2)^{r/2}$ when $\|x\|\ge 2$ and $\Phi\ge1$ everywhere.  
A direct calculation shows
\[
\mathcal L\Phi := \beta^{-1}\Delta\Phi - \nabla V\!\cdot\nabla\Phi
\le -c_1\Phi + C_1 \qquad (\|x\|\ge R) \,,
\]
for some suitable $R,c_1,C_1>0$, using $\nabla^2V\succeq\alpha I_d$ and polynomial bounds inside the ball $\{\|x\|\le R\}$\,.

By the local weak expansion (Lemma~\ref{lem:PV-weak-local}), 
\[
P_V^h\Phi(y)
= \Phi(y) + h\mathcal L\Phi(y) + R_h(y)\,,
\qquad
|R_h(y)|\le C\,h^2\bigl(1+\|y\|^{q_V}\bigr)\,,
\]
for some $q_V > 0$ in Assumption~\ref{assump:growth}\,.  
Since $r\ge q_V$, we have $1+\|y\|^{q_V}\le C\,(1+\|y\|^r)\le C\,\Phi(y)$ for all $y$, and hence, for $h \leq h_0$ sufficiently small,
\[
|R_h(y)|\le \frac{c_1}{4}\,h\,\Phi(y) + C_3h\,.
\]
Thus, for $y\in \R^d$
\[
P_V^h\Phi(y)
\le (1 - c_0h)\,\Phi(y) + C_0h
\]
for suitable constants $c_0\in(0,c_1/2]$ and $C_0>0$.

Since $c_1(1+\|x\|^r)\le\Phi(x)\le c_2(1+\|x\|^r)$ and $r\ge p$, we have
$1+\|x\|^p\le C\,(1+\|x\|^r)\le C\,\Phi(x)$, so the same inequality holds for $\phi(x)=\|x\|^p$ after adjusting constants.  
Finally, by duality,
\[
\int\|x\|^p(\K_V^h\rho)(x)\,dx
= \langle P_V^h\phi,\rho\rangle
\le (1 - c_0h)\!\int \|y\|^p\rho(y)\,dy + C_0h\,,
\]
which proves \eqref{eq:disc-moment-p-kernel}\,.

\end{proof}

\subsubsection{Auxiliary Lemmas in the Proof of Lemma~\ref{Lem:KV_lift-C31}}
\label{subsec:Gaussian_estimation}
In this section, we use the interior regularity of the integral equation associated with $\K_V^h$ to lift the moment or $(C_0^{0,1})^*$ estimate to the desired $C^{3,1}$ regularity. Firstly, we will show two auxiliary lemmas. The following heat kernel estimate is classical and can be found in, for example, \citep{davies1989heat}, while we still give an elementary proof for completeness. 

\begin{lemma}[Derivative and boundary-decay estimates for the Dirichlet heat kernel]
\label{lem:Dirichlet-Gaussian-derivative}
Let $G_h^D$ be the Dirichlet heat kernel on $U_r$.  
There exist constants $C,c>0$, depending only on $(d,U,U_r)$, such that for all 
$h\in(0,1]$ and all $x\in U$, $y\in U_r$:

\begin{enumerate}[label=(\alph*)]

\item 
\textbf{(Boundary decay).}  
For every integer $\ell=0,1,2,3,4$,
\begin{equation}\label{eq:killed-vs-free}
\big| \partial_x^\ell G_h^D(x,y) - \partial_x^\ell G_h(x-y) \big|
\;\le\;
C\,e^{-c r_0^2/h}\,h^{-\ell/2}\,G_{c h}(x-y)\,.
\end{equation}

\item 
\textbf{(Interior derivative bound).}  
For every finite signed measure $\nu$ on $U_r$ and every $\ell=0,1,2,3,4$,
\begin{equation}\label{eq:Dirichlet-derivative-measure}
\int_{U_r} \big|\partial_x^\ell G_h^D(x,y)\big|\,|\nu(y)|dy
\;\le\;
C\,h^{-(d+\ell)/2}
\int_{U_r} e^{-c|x-y|^2/h}\,|\nu(y)|dy\,.
\end{equation}
\end{enumerate}
\end{lemma}

\begin{proof}
\emph{(a)}  
Fix $y\in U_r$ and define  
\[
H_h(x) := G_h(x-y) - G_h^D(x,y)\,.
\]
Then  
\[
(\partial_h - \Delta_x) H_h = 0 \quad\text{in } (0,h]\times U_r\,,
\qquad
H_h(x) = G_h(x - y) \ \text{on }\partial U_r\,,
\qquad
\lim_{t\to 0} H_t \equiv 0\,.
\]

By the parabolic maximum principle,
\begin{equation}
0 \le H_h(x) \le \sup_{x_\partial\in\partial U_r} G_h(x_\partial - y)\,.
\end{equation}
If $x\in U$, then $\operatorname{dist}(x, \partial U_r)\ge r_0$, hence
$|x_\partial-y| \ge ||x-y| - r_0|$\,.
Using $(a-b)^2\ge \frac{a^2}{2} - b^2$ with $a=|x-y|$ and $b=r_0$,
\[
e^{-|x_\partial-y|^2/(4h)}
\le e^{-(|x-y|-r_0)^2/(4h)}
\le e^{-|x-y|^2/(8h)}\,e^{-r_0^2/(8h)}\,.
\]
Thus,
\[
|H_h(x)|
\;\le\;
C\,e^{-c r_0^2/h}\,G_{c h}(x-y)\,,
\qquad x\in U\,.
\]

Fix $|\lambda|=\ell\in\{1,2,3,4\}$\,.
Then $u:=\partial_x^\lambda H_h$ solves the heat equation on $(0,h]\times U_r$\,.
Let $s := \min\{r_0/2,\sqrt{h}\}$ and $Q_s(x,h):=B_s(x)\times[h-s^2,h]$. 
By interior parabolic estimates,
\[
\|\partial_x^\lambda H_h\|_{L^\infty(B_{s/2}(x))}
\;\le\;
C\,s^{-\ell}\,\|H\|_{L^\infty(Q_s(x,h))}\,.
\]
Since $x\in U$ and $s\le r_0/2$, the bound on $H_h$ from above holds in $Q_s(x,h)$.
Moreover, for all $h\in(0,1]$ we have $s^{-\ell}\le C\,h^{-\ell/2}$, 
with a constant depending only on $r_0$ and the finite range $\ell\le4$.
Therefore,
\[
|\partial_x^\ell H_h(x)|
\;\le\;
C\,h^{-\ell/2}\,e^{-c r_0^2/h}\,G_{c h}(x-y)\,.
\]

Since $H_h = G_h - G_h^D$, this is exactly \eqref{eq:killed-vs-free} for $\ell\ge1$\,,
and for $\ell=0$ we already obtained the same bound on $H_h$ above.

\emph{(b)}  
For $\ell\le 4$\,,
\[
\partial_x^\ell G_h^D(x,y)
=
\partial_x^\ell G_h(x-y)
+
\partial_x^\ell \big(G_h^D(x,y) - G_h(x-y)\big)\,.
\]
The free term satisfies the standard Gaussian derivative bound
\begin{equation}
\label{killed-free}
\big|\partial_x^\ell G_h(x-y)\big|
\le C\,h^{-(d+\ell)/2}\,e^{-c|x-y|^2/h}\,.
\end{equation}
The difference term is bounded by part (a):
\[
\big|\partial_x^\ell(G_h^D - G_h)(x,y)\big|
\le
C\,e^{-c r_0^2/h}\,h^{-\ell/2} G_{c h}(x-y)
\le
C\,h^{-(d+\ell)/2} e^{-c'|x-y|^2/h}\,,
\]
where we absorbed the prefactor $e^{-c r_0^2/h}$ into the Gaussian with a possibly different constant $c'>0$. Hence,
\[
\big|\partial_x^\ell G_h^D(x,y)\big|
\le
C\,h^{-(d+\ell)/2} e^{-c|x-y|^2/h}\,.
\]

Integrating against $|\nu|$ yields \eqref{eq:Dirichlet-derivative-measure}\,.
This completes the proof\,.
\end{proof}

\begin{lemma}[Kernel resolvent bound in $C^{3,1}$]
\label{lem:kernel-only-resolvent-C31}
Let $G_h^D$ be the Dirichlet heat kernel on $U_r$.  
Fix $p>d+2$ and for $h\in(0,1]$, set
\[
\mathcal G_h^D\nu(x):=\int_{U_r}G_h^D(x,y)\,\nu(y)\,dy\,,
\qquad
\mathcal R_h^D\nu := \sum_{n\ge1}(\mathcal G_h^D)^n\nu\,.
\]
Then there exists a constant $C=C(d,U,U_r,p)$ such that, for all $h\in(0,1]$ and
all finite signed measures $\nu$ on $U_r$,
\[
\|\mathcal R_h^{D}\nu\|_{C^{3,1}(U)}
\;\le\;
C\,M_p(\nu)\,.
\]
\end{lemma}

\begin{proof}
\noindent\emph{Step 1. One-step $C^{3,1}$ bounds.}
From Lemma~\ref{lem:Dirichlet-Gaussian-derivative} (valid for $\ell=0,1,2,3,4$), we have
\[
\big|\partial_x^\ell G_h^D(x,y)\big|
\;\le\;
C\,h^{-(d+\ell)/2}\,e^{-c\|x-y\|^2/h}\,,
\]
and hence
\[
\|\partial_x^\ell \mathcal G_h^D\nu\|_{L^\infty(U)}
\;\le\;
C\,h^{-(d+\ell)/2}
\int_{U_r}e^{-c\|x-y\|^2/h}\,|\nu(y)|\,dy\,.
\]

Since $U_r$ is bounded and $p>d$, there is $C=C(d,U_r,p)$ such that
\[
\int_{U_r}e^{-c\|x-y\|^2/h}\,|\nu(y)|\,dy
\;\le\;
C\,M_p(\nu)\,,
\qquad x\in U,\ h\in(0,1]\,.
\]
Thus
\begin{equation}\label{eq:step1-deriv-C31}
\|\partial_x^\ell \mathcal G_h^D\nu\|_{L^\infty(U)}
\;\le\;
C\,h^{-(d+\ell)/2}\,M_p(\nu)\,,
\qquad \ell=0,1,2,3\,.
\end{equation}

For the Lipschitz seminorm of $\nabla_x^3f$, note that on the bounded set $U$,
\[
[\nabla_x^3 f]_{\mathrm{Lip}(U)}
\;\le\;
C(U)\,\|\nabla_x^4 f\|_{L^\infty(U)}\,.
\]
Since Lemma~\ref{lem:Dirichlet-Gaussian-derivative} holds for $\ell=4$,
\[
\|\nabla_x^4 \mathcal G_h^D\nu\|_{L^\infty(U)}
\;\le\;
C\,h^{-(d+4)/2}\,M_p(\nu)\,.
\]

Combining \eqref{eq:step1-deriv-C31} for $\ell\le3$ with this Lipschitz control,
\begin{equation}\label{eq:step1-C31}
\|\mathcal G_h^D\nu\|_{C^{3,1}(U)}
\;\le\;
C\Big(h^{-d/2}+h^{-(d+1)/2}+h^{-(d+2)/2}+h^{-(d+3)/2}+h^{-(d+4)/2}\Big)\,M_p(\nu)\,.
\end{equation}

\noindent\emph{Step 2. Telescoping identity and decomposition.}
For $N\ge1$, since $s\mapsto \mathcal G_s^D\nu$ is smooth in $s>0$,
there exists a constant $C$ independent of $h,N$ such that
\begin{equation}
\label{eq:sum-int-compare}
\Big\|
\sum_{n=1}^N \mathcal G_{nh}^D\nu
-
\Big(
\mathcal G_h^D\nu + \int_h^{Nh}\partial_s(\mathcal G_s^D\nu)\,ds
\Big)
\Big\|_{C^{3,1}(U)}
\;\le\; C\,M_p(\nu).
\end{equation}

Decompose
\[
\Delta_x(G_s^D*\nu)
\;=\;
\Delta_x(G_s*\nu)+\Delta_x\big((G_s^D-G_s)*\nu\big)\,.
\]

Since $\partial_s G_s=\Delta_x G_s$,
\[
\int_h^{Nh}\Delta_x(G_s*\nu)\,ds
=
G_{Nh}*\nu - G_h*\nu\,.
\]
Hence, combining with \eqref{eq:sum-int-compare}, we obtain
\begin{equation}
\Big\|
\sum_{n=1}^N \mathcal G_{nh}^D\nu
-
\Big(
G_{Nh}*\nu + (G_h^D-G_h)*\nu
+ \int_h^{Nh}\Delta_x\big((G_s^D-G_s)*\nu\big)\,ds
\Big)
\Big\|_{C^{3,1}(U)}
\;\le\; C\,M_p(\nu)\,.
\end{equation}

For the boundary correction term, Lemma~\ref{lem:Dirichlet-Gaussian-derivative} yields, for $\ell=0,1,2,3$,
\[
\big|\partial_x^\ell\Delta_x(G_s^D-G_s)(x,y)\big|
\;\le\;
C\,e^{-c r_0^2/s}\,s^{-(d+\ell+2)/2}\,e^{-c\|x-y\|^2/s}\,.
\]
Integrating against $|\nu|$ gives
\[
\|\partial_x^\ell\Delta_x((G_s^D-G_s)*\nu)\|_{L^\infty(U)}
\;\le\;
C\,e^{-c r_0^2/s}\,s^{-(d+\ell+2)/2}\,M_p(\nu)\,.
\]

Using again the Lipschitz equivalence $\mathrm{Lip}(\nabla^3 f)\leq C\|\nabla^4 f\|_{L^\infty}$,
\[
\big\|\Delta_x((G_s^D-G_s)*\nu)\big\|_{C^{3,1}(U)}
\;\le\;
C\,e^{-c r_0^2/s}\,s^{-2-d/2}\,M_p(\nu)\,.
\]

Therefore,
\begin{equation}\label{eq:bdry-int-C31}
\Big\|\int_h^{Nh}\Delta_x((G_s^D-G_s)*\nu)\,ds\Big\|_{C^{3,1}(U)}
\;\le\;
C\,M_p(\nu)\int_h^{Nh} e^{-c r_0^2/s}\,s^{-2-d/2}\,ds\,.
\end{equation}

The integral in \eqref{eq:bdry-int-C31} is uniformly bounded when $Nh\ge r_0^2$.  
Furthermore,
\[
\|(G_h^D-G_h)*\nu\|_{C^{3,1}(U)}
\;\le\;
C\,e^{-c r_0^2/h}\,h^{-(d+4)/2}\,M_p(\nu)\,.
\]

Combining the above gives
\begin{equation}\label{eq:block-bound-C31}
\Big\|\sum_{n=1}^N \mathcal G_{nh}^D\nu\Big\|_{C^{3,1}(U)}
\;\le\;
\|G_{Nh}*\nu\|_{C^{3,1}(U)} + C\,M_p(\nu),
\qquad Nh\ge r_0^2.
\end{equation}

\noindent\emph{Step 3. Choice of block length and spectral gap.}
Choose
\[
N_0 := \lceil r_0^2/h\rceil,
\qquad N_0h\in[r_0^2,r_0^2+1].
\]
Then from \eqref{eq:step1-C31},
\[
\|G_{N_0h}*\nu\|_{C^{3,1}(U)}
\;\le\;
C\,M_p(\nu).
\]
Inserting this into \eqref{eq:block-bound-C31},
\[
\Big\|\sum_{n=1}^{N_0} \mathcal G_{nh}^D\nu\Big\|_{C^{3,1}(U)}
\;\le\;
C\,M_p(\nu).
\]

The tail satisfies (using the Dirichlet spectral gap)
\[
\|\mathcal G_s^D\nu\|_{C^{3,1}(U)}
\;\le\;
C e^{-\lambda_1^D s} M_p(\nu),
\qquad s\ge r_0^2,
\]
hence
\[
\sum_{n\ge N_0+1}\|\mathcal G_{nh}^D\nu\|_{C^{3,1}(U)}
\;\le\;
C M_p(\nu).
\]

Combining block and tail gives
\[
\sum_{n\ge1}\|\mathcal G_{nh}^D\nu\|_{C^{3,1}(U)}
\;\le\;
C\,M_p(\nu)\,.
\]

Since
\[
\mathcal R_h^D\nu=\sum_{n\ge1}(\mathcal G_h^D)^n\nu
=\sum_{n\ge1}\mathcal G_{nh}^D\nu\,,
\]
the proof is complete.
\end{proof}

\subsubsection{Stability and Contraction of the  Kernel Formula}
\label{subsec:stability}
The results in this section establish the regularity and boundedness properties of the kernel sequence $\widehat{\rho}_k$, which serve as intermediate steps for the proof in Section~\ref{sec_kernel_approx}.
\begin{lemma}[Regularization and contraction of $P_V^h$]
\label{lem:PV-regularization}
Then there exists $h_0\in(0,1]$ and a constant
$C=C(d,\beta,\alpha,\|V\|_{C^{3,1}(U_r)},U,U_r)$ such that for all
$h\in(0,h_0]$ and $\varphi\in C^{2,1}(U_r)$,
\begin{align}
\|P_V^h\varphi\|_{L^\infty(U_r)}
&\le \|\varphi\|_{L^\infty(U_r)}\,, \label{eq:PVT-Linf}\\[3pt]
\|\nabla P_V^h\varphi\|_{L^\infty(U)}
&\le \frac{1}{1+\alpha h}\,\|\nabla\varphi\|_{L^\infty(U_r)} \, , 
\label{eq:PVT-grad-unified}\\[3pt]
\|P_V^h\varphi\|_{C^{3,1}(U)}
&\le C\,\|\varphi\|_{C^{3,1}(U_r)}\,.
\label{eq:PVT-C31}
\end{align}
\end{lemma}

\begin{proof}
\noindent\emph{Step 1. $L^\infty$ contraction and Strong log-concavity.}
For each fixed $y\in U_r$,
\(
P_V^h\varphi(y)=\E_{\mu_y}[\varphi(X)]
\)
is an average of $\varphi$ against a probability measure.
Hence
\[
|P_V^h\varphi(y)|
\le \|\varphi\|_{L^\infty(U_r)}\,,\qquad y\in U_r\,.
\]

The density of $\mu_y$ is proportional to $e^{-U_y(x)}$, where
\[
\nabla_x^2 U_y(x)
=\tfrac{\beta}{2}\big(\nabla^2 V(x)+h^{-1}I_d\big)
\succeq \kappa I_d\,,\qquad
\kappa:=\tfrac{\beta}{2}\Big(\alpha+\frac{1}{h}\Big)\,.
\]
Thus $\mu_y$ is $\kappa$-strongly log-concave.
By Poincaré inequality,
\begin{equation}\label{eq:Poincare}
\Var_{\mu_y}(f)
\;\le\;\frac{1}{\kappa}\,\E_{\mu_y}\|\nabla f(X)\|^2\,,
\qquad f\in C^1_{\mathrm{loc}}(\R^d)\,.
\end{equation}
In particular, for any unit $u\in\R^d$,
\[
\Var_{\mu_y}(\langle X-y,u\rangle)
\;\le\;\frac{1}{\kappa}\,,
\qquad
\kappa=\tfrac{\beta}{2}\Big(\alpha+\frac{1}{h}\Big)\,.
\]

\noindent\emph{Step 2. Score identity and gradient contraction.}
Recall the score identity (differentiate under the integral):
\begin{equation}\label{eq:score_exp_identity}
\nabla_y \E_{\mu_y}[g(X)]
=\tfrac{\beta}{2h}\,\Cov_{\mu_y}\!\big(g(X),X-y\big)\,,
\qquad g\in C^1_{\mathrm{loc}}(\R^d)\,.
\end{equation}
For any unit $u$,
\[
D_u \E_{\mu_y}[g(X)]
=\tfrac{\beta}{2h}\,\Cov_{\mu_y}\big(g(X),\langle X-y,u\rangle\big)\,.
\]
Cauchy-Schwarz and the Poincaré inequality \eqref{eq:Poincare} give
\[
\big|\Cov_{\mu_y}(g,\langle X-y,u\rangle)\big|
\le \sqrt{\Var_{\mu_y}(g)\,\Var_{\mu_y}(\langle X-y,u\rangle)}
\le \kappa^{-1}\,\sqrt{\E_{\mu_y}\|\nabla g(X)\|^2}\le \kappa^{-1}\|\nabla g\|_{L^{\infty}(U_r)}\,.
\]
Thus
\[
|D_u \E_{\mu_y}[g(X)]|
\le \tfrac{\beta}{2h}\,\kappa^{-1}\,\|\nabla g\|_{L^\infty(U_r)}
=\frac{1}{1+\alpha h}\,\|\nabla g\|_{L^\infty(U_r)}\,.
\]
 
Applying this to $g=\varphi$ and taking the supremum over all unit $u$ gives precisely
\[
\|\nabla P_V^h\varphi\|_{L^\infty(U)}
=\|\nabla_y\E_{\mu_y}[\varphi(X)]\|_{L^\infty(U)}
\le \frac{1}{1+\alpha h}\,\|\nabla\varphi\|_{L^\infty(U_r)}\,,
\]
which is \eqref{eq:PVT-grad-unified}.

\noindent\emph{Step 3. Second derivatives and $C^{2,1}$ bound.}
By definition, $\nabla P_V^h\varphi$ can be written as
\begin{equation}\label{eq:PVT-grad-decomp}
\nabla P_V^h\varphi(y)
=\underbrace{\E_{\mu_y}[\nabla\varphi(X)]}_{=:A(y)}
-\tfrac{\beta}{2}\,\underbrace{\Cov_{\mu_y}(\nabla V(X),\varphi(X))}_{=:B(y)}\,.
\end{equation}
Thus
\[
\nabla^2 P_V^h\varphi(y)
=\nabla A(y)-\tfrac{\beta}{2}\,\nabla B(y)\,.
\]

Each component of $A$ is of the form $A_i(y)=\E_{\mu_y}[\partial_{x_i}\varphi(X)]$.
Applying the score identity \eqref{eq:score_exp_identity} to $g=\partial_{x_i}\varphi$,
and repeating the argument in Step~2, we obtain
\[
\|\nabla A_i\|_{L^\infty(U)}
\le \frac{1}{1+\alpha h}\,
\|\nabla(\partial_{x_i}\varphi)\|_{L^\infty(U_r)}
\le \frac{1}{1+\alpha h}\,\|\nabla^2\varphi\|_{L^\infty(U_r)}\,.
\]
Hence
\begin{equation}\label{eq:A-Hess}
\|\nabla A\|_{L^\infty(U)}
\le \frac{1}{1+\alpha h}\,\|\nabla^2\varphi\|_{L^\infty(U_r)}\,.
\end{equation}

Write
\[
B(y)
=\E_{\mu_y}[\nabla V(X)\,\varphi(X)]
-\E_{\mu_y}[\nabla V(X)]\,\E_{\mu_y}[\varphi(X)]\,.
\]
Differentiate along a unit direction $u$.
Using the score identity \eqref{eq:score_exp_identity} for each expectation, we obtain
\[
D_u B(y)
=\tfrac{\beta}{2h}\Big\{
\Cov_{\mu_y}(\nabla V\,\varphi,\langle X-y,u\rangle)
-\E_{\mu_y}[\varphi]\,\Cov_{\mu_y}(\nabla V,\langle X-y,u\rangle)
-\E_{\mu_y}[\nabla V]\,\Cov_{\mu_y}(\varphi,\langle X-y,u\rangle)
\Big\}.
\]
Each covariance is controlled by Poincaré \eqref{eq:Poincare}.
For example, since $\|\nabla(\nabla V \varphi)\|\leq \|\nabla^2 V\||\varphi| + \|\nabla V\|\|\nabla \varphi\|$, we have
\[
\big\|\Cov_{\mu_y}(\nabla V\,\varphi,\langle X-y,u\rangle)\big\|
\le \kappa^{-1}\,\|\nabla(\nabla V\,\varphi)\|_{L^\infty(U_r)}
\le Ch\|\varphi\|_{C^{1,1}(U_r)}\,.
\]
Similarly,
\[
\big\|\Cov_{\mu_y}(\nabla V,\langle X-y,u\rangle)\big\|
\le Ch,\qquad
\big|\Cov_{\mu_y}(\varphi,\langle X-y,u\rangle)\big|
\le Ch\|\varphi\|_{C^{0,1}(U_r)}\,.
\]
Combining these bounds, we obtain
\begin{equation}\label{eq:B-Lip}
\|\nabla B\|_{L^\infty(U)}
=[B]_{\mathrm{Lip}(U)}
\le Ch\|\varphi\|_{C^{1,1}(U_r)}
\le Ch\|\varphi\|_{C^{2,1}(U_r)}\,.
\end{equation}

From \eqref{eq:A-Hess} and \eqref{eq:B-Lip},
\[
\|\nabla^2 P_V^h\varphi\|_{L^\infty(U)}
\le \|\nabla A\|_{L^\infty(U)}+\tfrac{\beta}{2}\|\nabla B\|_{L^\infty(U)}
\le \frac{1}{1+\alpha h}\,\|\nabla^2\varphi\|_{L^\infty(U_r)}
+Ch\|\varphi\|_{C^{2,1}(U_r)}\,.
\]
Since $h\le h_0\le1$, the right–hand side is bounded by
\[
\|\nabla^2 P_V^h\varphi\|_{L^\infty(U)}
\le C\,\|\varphi\|_{C^{2,1}(U_r)}\,.
\]

A similar differentiation of \eqref{eq:PVT-grad-decomp} one more time (using the score identity and Poincaré, and the fact that $V\in C^{4,1}$) shows that the Lipschitz seminorm of $\nabla^3 P_V^h\varphi$ is also bounded by $C\|\varphi\|_{C^{3,1}(U_r)}$.
We omit the repetitive details and summarize 
\[
\|P_V^h\varphi\|_{C^{3,1}(U)}
\le C\,\|\varphi\|_{C^{3,1}(U_r)}\,,
\]
for all $h\in(0,h_0]$, which is \eqref{eq:PVT-C31}.
\end{proof}

\begin{lemma}[Local $C^{3,1}$ stability of $\K_V^h$]\label{lem:Kv-rhok-C31}
There exist $h_0\in(0,1]$ and $C<\infty$, depending only on
\[
(d,\beta,\alpha,\|V\|_{C^{4,1}(U_r)},U,U_r)\,,
\]
such that for all $h\in(0,h_0]$ and all $f\in C^{3,1}(U_r)$, 
\begin{equation}\label{eq:KV-smoothing-C31}
\|\K_V^h f\|_{C^{3,1}(U)}
\le C\,\|f\|_{C^{3,1}(U_r)}\,.
\end{equation}
\end{lemma}

\begin{proof}
By the kernel representation \eqref{eq:KVT-kernel},
\[
\K_V^h f(x)
= \psi(x) \int_{\R^d} G_h(x-y)\,\frac{f(y)}{Z(y)}\,dy\,,
\qquad x\in U_r \, .
\]
Assumption~\ref{assump:Regularity} gives $\psi^{\pm1}\in C^{4,1}(U_r)$.
Moreover, for $h\le h_0$ sufficiently small, the normalizing factor $Z$ satisfies $0<c_0\le Z\le C_0$ on $U_r$ and $Z^{\pm1}\in C^{3,1}(U_r)$ with bounds depending only on the listed parameters.

Let $|\lambda|\le3$.  Using $\partial_{x_i}G_h(x-y)=-\partial_{y_i}G_h(x-y)$, we repeatedly integrate by parts in $y$ to transfer all $x$–derivatives from $G_h$ onto $f/Z$.  This yields the representation
\[
\partial_x^\lambda \K_V^h f(x)
= \sum_{\mu\le\lambda} a_{\lambda,\mu}(x)
     \int_{\R^d} G_h(x-y)\,
     D_y^{\lambda-\mu}\!\Bigl(\frac{f(y)}{Z(y)}\Bigr)\,dy\,,
\]
where each coefficient $a_{\lambda,\mu}$ is a linear combination of $\partial_x^\nu \psi(x)$ for $|\nu|\le|\lambda|$.  Hence $\|a_{\lambda,\mu}\|_{C^{0,1}(U)}\le C$ since $\psi\in C^{4,1}(U_r)$.

Because $\int_{\R^d} G_h(x-y)\,dy=1$ and $x\in U\Subset U_r$, we obtain
\[
\sup_{x\in U}\bigl|\partial_x^\lambda \K_V^h f(x)\bigr|
\le C\,\sup_{y\in U_r}\max_{|\gamma|\le|\lambda|}
     \bigl|D_y^\gamma (f(y)/Z(y))\bigr|
\le C\,\|f\|_{C^{3,1}(U_r)}\,.
\]

The same formula, applied to difference quotients in $x$, yields uniform Lipschitz bounds for all derivatives up to order $3$ on $U$, again bounded by $C\,\|f\|_{C^{3,1}(U_r)}$.
Thus \eqref{eq:KV-smoothing-C31} follows.
\end{proof}

\subsubsection{Local Uniform Positivity for Discrete Iterates}
\label{subsec:local-positivity}

The next lemma establishes the uniform positivity of the discrete iterates. This property will be used in the proof of Theorem~\ref{thm:weak-score-drift}, where the score term appears in the form $\nabla\rho/\rho$ and it is therefore essential to ensure that the denominator remains uniformly bounded away from zero.

 \begin{lemma}
\label{lem:local-positivity}
There exist $h_0\in(0,1]$ and $c_{U,h_0}>0$ such that, for all
$0<h\le h_0$ and all $k\ge0$,
\begin{equation}\label{eq:disc-pos}
\inf_{x\in U}\widehat\rho_k(x)\;\ge\;c_{U,h_0}\,,
\qquad
\inf_{x\in U}\rho_k(x)\;\ge\;c_{U,h_0}\,.
\end{equation}
\end{lemma}

\begin{proof}
We first construct a lower bound for the density in the continuous Fokker-Planck equation and then use the weak convergence estimates to derive a lower bound for the discrete sequence.

\noindent\emph{(i) FP flow.}
By standard parabolic regularity and strict ellipticity of $\beta^{-1}\Delta-\nabla V\cdot\nabla$, the Fokker-Planck solution $\rho(t,\cdot)$ is $C^{2,1}$ on $U_r$ for each $t>0$, and the map $(t,x)\mapsto\rho(t,x)$ is continuous on $[0,T]\times\overline U$ for every $T>0$.

By Assumption~\ref{assump:moment} on the initial density, there exists $c_0>0$ such that
\[
\inf_{x\in U_r}\rho_0(x)\;\ge\;c_0\,.
\]
Since $\rho(t,x)>0$ for all $t>0$ and $x\in U$ \citep{evans2022partial}, and $(t,x)\mapsto\rho(t,x)$ is continuous on $[0,h_0]\times\overline U$ for any fixed $h_0>0$, the infimum value on this compact set is strictly positive:
\[
c_1 := \inf_{0\le t\le h_0}\,\inf_{x\in U}\rho(t,x)\;>\;0\,.
\]
On the other hand, the invariant density
\[
\rho^\ast(x) \;=\; Z^{-1}e^{-\beta V(x)}
\]
is continuous and strictly positive on $U_r$, hence
\[
c_\ast := \inf_{x\in U}\rho^\ast(x) \;>\;0\,.
\]

Moreover, by the uniform convexity assumption on $V$, the Fokker--Planck semigroup
converges exponentially fast to $\rho^\ast$ in $C^0(U_r)$; namely, there exist
constants $C\ge1$ and $\lambda>0$ such that
\begin{equation}\label{eq:FP-C0-decay}
\|\rho(t)-\rho^\ast\|_{C^0(U_r)}
\;\le\;
C\,e^{-\lambda t}\,\|\rho_0-\rho^\ast\|_{C^0(U_r)},
\qquad t\ge0 \,.
\end{equation}

Since $U\Subset U_r$ and $\rho^\ast\propto e^{-\beta V}$ is continuous and strictly positive on $U_r$, there exists $c_\ast>0$ with $\inf_{x\in U}\rho^\ast(x)\ge c_\ast$. Choose $h_1\ge h_0$ large enough so that
\[
C\,e^{-\lambda h_1}\,\|\rho_0-\rho^\ast\|_{C^0(U_r)}\;\le\;\tfrac12 c_\ast\,.
\]
Then for all $t\ge h_1$ and $x\in U$,
\[
\rho(t,x)
\;\ge\;\rho^\ast(x)-\|\rho(t)-\rho^\ast\|_{C^0(U_r)}
\;\ge\;c_\ast-\tfrac12 c_\ast
\;=\;\tfrac12 c_\ast\,.
\]
Combining this with the definition of $c_1$ (and enlarging $h_0$ to
$h_1$ if necessary) gives
\begin{equation}
\label{eq:FP-pos}
\inf_{t\ge0}\,\inf_{x\in U}\rho(t,x)
\;\ge\;\min\{c_1,\tfrac12 c_\ast\}
=:c_U>0\,.
\end{equation}

\noindent\emph{(ii) Discrete iterates.}
We now transfer the bound \eqref{eq:FP-pos} to the discrete chains. By Theorem~\ref{thm:PV-weak}, combined with a discrete Grönwall argument as in Theorem~\ref{cor:weak_global_BRWP}, there exists $C_1>0$ such that, for all $0<h\le1$, all $k\ge0$ and all test functions
$\phi\in C^{1,1}_C(U_r)$,
\begin{equation}\label{eq:dual-weak-error-local-pos}
\big|\langle \phi,\rho_k-\rho(kh)\rangle\big|
+\big|\langle \phi,\widehat\rho_k-\rho(kh)\rangle\big|
\;\le\; C_1 h\,\|\phi\|_{C^{1,1}(U_r)}\,.
\end{equation}

Moreover, by Theorem~\ref{thm:uniform-C31} the families $(\rho(t))_{t\ge0}$, $(\rho_k)_{k\ge0}$ and $(\widehat\rho_k)_{k\ge0}$ are uniformly bounded in $C^{2,1}(U_r)$. In particular, all three families are uniformly Lipschitz on $U$:
\[
\sup_{t\ge0}\|\rho(t)\|_{C^{0,1}(U)}
+\sup_{k\ge0}\|\rho_k\|_{C^{0,1}(U)}
+\sup_{k\ge0}\|\widehat\rho_k\|_{C^{0,1}(U)}
\;\le\; C_2\,,
\]
for some constant $C_2$ independent of $h$.

Fix $x\in U$ and let $\eta\in C^\infty(B_1(0))$ be a standard bump with $\int\eta=1$. For $r>0$ small enough so that $B_r(x)\subset U$, define
\[
\phi_{x,r}(y) := \frac{1}{r^d}\,\eta\Bigl(\frac{y-x}{r}\Bigr)\,,
\qquad y\in\R^d\,.
\]
Then $\phi_{x,r}\in C^{1,1}_C(U_r)$, and by scaling we have
\[
\|\phi_{x,r}\|_{C^{1,1}(U_r)}\le C\,r^{-d-2}\,,
\]
for some constant $C$ depending only on $\eta$ and $d$. Moreover, for any Lipschitz density $f$ on $U$,
\[
\Bigl|\int f(y)\,\phi_{x,r}(y)\,dy - f(x)\Bigr|
\;\le\; C\,r\,\|f\|_{C^{0,1}(U)}\,.
\]

Applying this first to $f=\rho(kh,\cdot)$ and then to $f=\rho_k(\cdot)$ and $f=\widehat\rho_k(\cdot)$, and using \eqref{eq:dual-weak-error-local-pos}, we obtain
\[
\begin{aligned}
|\rho_k(x) -\rho(kh,x)|
&\le
\big|\langle\phi_{x,r},\rho_k-\rho(kh)\rangle\big|
+ C r\,\bigl(\|\rho_k\|_{C^{0,1}(U)}+\|\rho(kh)\|_{C^{0,1}(U)}\bigr)\,,
\\
|\widehat\rho_k(x) -\rho(kh,x)|
&\le
\big|\langle\phi_{x,r},\widehat\rho_k-\rho(kh)\rangle\big|
+ C r\,\bigl(\|\widehat\rho_k\|_{C^{0,1}(U)}+\|\rho(kh)\|_{C^{0,1}(U)}\bigr)\,.
\end{aligned}
\]
By \eqref{eq:dual-weak-error-local-pos} and the uniform Lipschitz bounds, this yields
\[
|\rho_k(x)-\rho(kh,x)| + |\widehat\rho_k(x)-\rho(kh,x)|
\;\le\; C_3\bigl(h\,r^{-d-2} + r\bigr)\,,
\]
for some constant $C_3$ independent of $k$, $x\in U$ and $0<h\le1$.

Choosing, for example, $r=h^{1/(d+3)}$ gives
\[
\sup_{k\ge0}\|\rho_k-\rho(kh)\|_{L^{\infty}(U)}
+\sup_{k\ge0}\|\widehat\rho_k-\rho(kh)\|_{L^{\infty}(U)}
\;\le\; C_4\,h^{1/(d+3)}\,,
\]
for some $C_4>0$ independent of $h$.

Combining this with the lower bound \eqref{eq:FP-pos}, we obtain, for all $x\in U$, $k\ge0$ and $0<h\le1$,
\[
\rho_k(x)
\;\ge\;\rho(kh,x) - C_4 h^{1/(d+3)}
\;\ge\; c_U - C_4 h^{1/(d+3)}\,,
\]
and the same inequality with $\rho_k$ replaced by $\widehat\rho_k$. Finally, choose $h_0\in(0,1]$ so small that $C_4 h_0^{1/(d+3)}\le c_U/2$, and set
\[
c_{U,h_0} := \tfrac12 c_U>0\,.
\]
Then for all $0<h\le h_0$, all $k\ge0$ and all $x\in U$,
\[
\rho_k(x)\;\ge\;c_{U,h_0}\,,
\qquad
\widehat\rho_k(x)\;\ge\;c_{U,h_0}\,,
\]
which proves \eqref{eq:disc-pos}.
\end{proof}

\subsubsection{Corollaries for kernel approximation}
In this section, we record several approximation properties of $\K_V^h\rho_k$. Although our algorithm ultimately requires only the score approximation, the corresponding density estimates are not the main focus. Nevertheless, the following corollaries remain of independent interest for applications in which one wishes to use the kernel formula to approximate the full density evolution.
 
 \begin{lemma}[One–step weak error via kernel comparison in the dual $C^{0,1}$ norm]
\label{lem:weak_epsilon_k}
Define the local error and its weak $C^{0,1}$–dual norm as 
\[
\epsilon_k:=\rho_k-\widehat\rho_k\,,\qquad e_k^{0,1}:=\|\epsilon_k\|_{(C_0^{0,1}(U_r))^*}\,.
\]
Then there exist constants $h_0\in(0,1]$, $c_1\in(0,\alpha/2]$, and $C_2,C_3<\infty$, depending only on
\[
(d,\beta,\alpha,\|V\|_{C^{3,1}(U_r)},U,U_r,M_p(\rho_0))\,,
\]
such that for all $k\ge0$ and $h\in(0,h_0]$,
\begin{equation}\label{eq:eps-rec-C1}
e_{k+1}^{0,1}
\;\le\;
(1-c_1 h+C_2 h^2)\,e_k^{0,1}
\;+\;
C_3 h^2\,\|\rho_k\|_{C^{2,1}(U_r)}\,.
\end{equation}
\end{lemma}

\begin{proof}
Fix $\varphi\in C^{0,1}(U)$ with $\|\varphi\|_{C^{0,1}(U)}\le1$, and extend it to 
$\widetilde\varphi\in C^{0,1}(U_r)$ with 
\[
\widetilde\varphi|_U=\varphi\,,\qquad
\|\widetilde\varphi\|_{C^{0,1}(U_r)}\le C_{\mathrm{ext}}\,.
\]
Then
\begin{equation}\label{eq:eps-split-C01}
\langle\varphi,\epsilon_{k+1}\rangle
=\langle\varphi,\rho_{k+1}-\widehat\rho_{k+1}\rangle
=\underbrace{\langle\varphi,(F_k)_\#\rho_k-\K_V^h\rho_k\rangle}_{\mathcal E_1}
+\underbrace{\langle\varphi,\K_V^h(\rho_k-\widehat\rho_k)\rangle}_{\mathcal E_2}\,.
\end{equation}

By duality and the gradient contraction in Lemma~\ref{lem:PV-regularization}, for some $c_1\in(0,\alpha/2]$  and $C_2<\infty$ depending only on
$(d,\beta,\alpha,\|V\|_{C^{3,1}},U,U_r)$, we have
\begin{align}
\label{eq:E2-C01}
&|\mathcal E_2|
=|\langle P_V^h\varphi,\epsilon_k\rangle|
\le \|P_V^h\varphi\|_{C_0^{0,1}(U)}\,e_k^{0,1}
\le (1-c_1 h+C_2 h^2)\,e_k^{0,1}\,.
\end{align}

By the weak expansion of $P_V^h$ in Theorem~\ref{thm:PV-weak} and the expansion of $(F_k)_{\#}\rho_k$ through the change of variable formula, there exists a constant 
$C_3<\infty$ depending only on 
$(d,\beta,\alpha,\|V\|_{C^{3,1}},U,U_r,M_p(\rho_0))$ such that for all 
$\varphi\in C^{0,1}(U_r)$,
\begin{equation}\label{eq:E1-C01}
\big|\langle\varphi,(F_k)_\#\rho_k-\K_V^h\rho_k\rangle\big|
\le
C_3 h^2\,\|\varphi\|_{C^{0,1}(U_r)}\,\|\rho_k\|_{C^{2,1}(U_r)}\,.
\end{equation}
 
Combining \eqref{eq:E2-C01} and \eqref{eq:E1-C01} in \eqref{eq:eps-split-C01}, we obtain
\[
|\langle\varphi,\epsilon_{k+1}\rangle|
\le 
(1-c_1 h+C_2 h^2)\,e_k^{0,1}
+ C_3h^2\,\|\rho_k\|_{C^{2,1}(U_r)}\,.
\]
Taking the supremum over all $\varphi\in C^{0,1}(U)$ with $\|\varphi\|_{C^{0,1}(U)}\le1$ yields
\[
e_{k+1}^{0,1}
\le
(1-c_1 h+C_2 h^2)\,e_k^{0,1}
+  C_3h^2\,\|\rho_k\|_{C^{2,1}(U_r)}\,,
\]
which is exactly \eqref{eq:eps-rec-C1}.
\end{proof}

We remark that the above Lemma implies $e_{k}^{0,1}$ is a contraction when $1-c_1h + C_2h^2<1 $, hence, in the following, we assume $h_0 \leq c_1/(2C_2)$ which ensures that
\[
e_{k+1}^{0,1}\leq (1-c_1h/2)e_k^{0,1}+C_3h^2\|\rho_k\|_{C^{2,1}(U_r)}\,.
\]
 
We stated the following approximation results for non-refreshed chains as a direct corollary for Lemma~\ref{lem:weak_epsilon_k}.
\begin{corollary}
\label{cor:KV_non_refreshed}
For any $\varphi\in C_{\mathrm{loc}}^{2,1}(\R^d)$ and any bounded open set $U\subset\R^d$, there exist constants $C>0$ and $h_0>0$ such that for all $h\in (0,h_0]$ and all $k\ge0$,
\[
\bigl|\langle\varphi,\,\widehat\rho_k - \rho_k\rangle\bigr| \;\le\; C h \,.
\]
\end{corollary}
\begin{proof}
By Lemma~\ref{lem:weak_epsilon_k}, the local weak error between one kernel step and one BRWP step satisfies
\[
\bigl|\langle \varphi,\, \widehat\rho_{k+1}-\rho_{k+1}\rangle\bigr|
\;\le\;
\bigl(1- ch+Ch^2\bigr)\,
\bigl|\langle \varphi,\, \widehat\rho_{k}-\rho_{k}\rangle\bigr|
\;+\; C \|\varphi\|_{C^{2,1}(U)}h^2 \,.
\]
Applying the discrete Gr\"onwall inequality to this recursion yields the uniform bound
\[
\bigl|\langle \varphi,\, \widehat\rho_k - \rho_k\rangle\bigr|
\;\le\; C h \,,
\qquad k\ge0\,,
\]
where $C$ depends on $\varphi$ and $U$, but is independent of $k$ and of $h$ for $h\le h_0$.
\end{proof}

The next result states the approximation we have if we apply the kernel formula to the exact solution of the Fokker Plank equation.
\begin{corollary}[Weak one–step density approximation]
\label{cor:weak_global_BRWP}
There exist $h_0\in(0,1]$ and
\[
C=C(d,\beta,a_1,a_2,C_V,q_V,p)<\infty
\]
such that, for any $t\ge0$, and all
$h\in(0,h_0]$,
\begin{equation}\label{eq:global-kernel-FP-weak}
\Big|
\big\langle \rho(t),\;\K_V^h\rho(t)-\rho(t+h)\big\rangle
\Big|
\ \le\
C\,h^2\Big(
\|\rho(t)\|_{C^{2,1}(U)}\,\|\rho(t)\|_{C^{0,1}(U)}+1
\Big)\,.
\end{equation}
\end{corollary}

\begin{proof}
Fix $t\ge0$ and write $\rho:=\rho(t)$. We add and subtract the first–order term generated by the Fokker–Planck operator:
\[
\big\langle \rho,\K_V^h\rho-\rho(t+h)\big\rangle
=
\underbrace{\big\langle \rho,\K_V^h\rho-\rho-h\,\mathcal{L}^*\rho\big\rangle}_{\mathrm{(I)}}
+
\underbrace{\big\langle \rho,\rho+h\,\mathcal{L}^*\rho-\rho(t+h)\big\rangle}_{\mathrm{(II)}}\,.
\]

For (I), Theorem~\ref{thm:PV-weak} and the duality
$\langle P_V^h f,g\rangle=\langle f,\K_V^h g\rangle$ give
\[
|\mathrm{(I)}|
=
\big|\langle P_V^h\rho-\rho-h\,\mathcal L\rho,\,\rho\rangle\big|
\ \le\
C\,h^2\Big(\|\rho\|_{C^{2,1}(U)}\,\|\rho\|_{C^{0,1}(U)}+1\Big).
\]

For (II), we use the Fokker–Planck equation. A Taylor expansion in time yields
\[
\rho(t+h) = \rho + h\,\mathcal L^{*}\rho + \mathcal O(h^2)
\quad\text{in the weak sense against $C^{0,1}(U)$ test functions,}
\]
with the $\mathcal O(h^2)$ term bounded by
$C h^2(\|\rho\|_{C^{2,1}(U)}+1)$. Testing this expansion against $\rho$ gives
\[
|\mathrm{(II)}|
\le
C\,h^2\Big(\|\rho\|_{C^{2,1}(U)}\,\|\rho\|_{C^{0,1}(U)}+1\Big).
\]

Combining the bounds for (I) and (II) yields \eqref{eq:global-kernel-FP-weak}.
\end{proof}}



\section{Postponed Proofs and Lemmas for Section~\ref{sec_conv_KL}}
\label{sec:app_KL}
In this section, we provide proofs for lemmas and theorems that are used in our analysis. Some of the results in Section \ref{App_sec_cont} are well-known. We include them in the appendix for the sake of completeness.

\subsection{Identities and inequalities along the flow of the Fokker-Planck equation}
\label{App_sec_cont}

Let $\rho^*(x) = \frac{1}{Z} \exp(-\beta V(x))$. When $\nabla^2 V \succeq \alpha I$, it is known that $\rho^*$ satisfies the log-Sobolev inequality. Specifically, for any smooth function $g$ with $E_{\rho^*}(g^2) \leq \infty$, we have
\begin{equation}
\label{Log_sobolev}
        \int g^2 \log g^2 \rho^* \, dx - \int g^2 \rho^* \, dx \log \int g^2 \rho^* \, dx \leq \frac{2}{\beta \alpha} \int \|\nabla g\|^2 \rho^* \, dx \,. 
\end{equation}
Note that the factor of $\beta$ arises from the definition $\rho^* = \frac{1}{Z} \exp(-\beta V)$, and our assumption applies to the function $V$.

Using the log-Sobolev inequality, we can derive the following dissipation result for the Fokker-Planck equation \eqref{eqn_FP} with solution $\rho$.  

\begin{lemma}
\label{Lemma_KL_derivative}
    If $\rho^*$ satisfies the log-Sobolev inequality \eqref{Log_sobolev} and $\rho$ is the solution to the Fokker-Planck equation \eqref{eqn_FP}, then
    \[
    \frac{d}{dt}\KL(\rho \|\rho^*) = -\beta^{-1} \FI(\rho\|\rho^*) \leq -2 \alpha \KL(\rho \|\rho^*) \,.
    \]
\end{lemma}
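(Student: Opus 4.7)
The plan is to execute the classical two-step argument: first establish the de Bruijn-type entropy dissipation identity via integration by parts, and then convert the Fisher-information bound into a KL-divergence bound through the log-Sobolev inequality with the natural square-root test function.

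For the identity $\frac{d}{dt}\KL(\rho\|\rho^*) = -\beta^{-1}\FI(\rho\|\rho^*)$, I would start from the Wasserstein-gradient-flow form of the Fokker--Planck equation given in \eqref{eqn_FP}, namely
\[
\partial_t \rho = \beta^{-1}\nabla\cdot\bigl(\rho\,\nabla\log(\rho/\rho^*)\bigr).
\]
Differentiating $\KL(\rho\|\rho^*)=\int \rho\log(\rho/\rho^*)\,dx$ in time, one term is $\int \partial_t\rho\cdot \log(\rho/\rho^*)\,dx$ and the other is $\int \partial_t \rho\,dx$, which vanishes since $\int \rho\,dx \equiv 1$. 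Substituting the PDE into the surviving term and integrating by parts (assuming sufficient decay at infinity so that boundary terms drop) produces $-\beta^{-1}\int \|\nabla\log(\rho/\rho^*)\|_2^2\,\rho\,dx = -\beta^{-1}\FI(\rho\|\rho^*)$.

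For the inequality, I would apply the log-Sobolev inequality \eqref{Log_sobolev} to the test function $g = \sqrt{\rho/\rho^*}$. Then $g^2\rho^* = \rho$, so $\int g^2\rho^*\,dx = 1$ and the logarithmic term on the right of \eqref{Log_sobolev} vanishes, while $\int g^2\log g^2\,\rho^*\,dx = \KL(\rho\|\rho^*)$. For the gradient term, the chain rule $\nabla g = \tfrac12\sqrt{\rho/\rho^*}\,\nabla\log(\rho/\rho^*)$ gives
\[
\int \|\nabla g\|_2^2\,\rho^*\,dx = \tfrac14\int \|\nabla\log(\rho/\rho^*)\|_2^2\,\rho\,dx = \tfrac14\FI(\rho\|\rho^*).
\]
Substituting back yields $\KL(\rho\|\rho^*) \leq \tfrac{1}{2\beta\alpha}\FI(\rho\|\rho^*)$, which is exactly inequality \eqref{PL_ineq}. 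Combining this with the dissipation identity produces the advertised bound $\frac{d}{dt}\KL(\rho\|\rho^*) \leq -2\alpha\KL(\rho\|\rho^*)$.

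There is no genuine obstacle here; this is a textbook computation. The only items to be careful about are (i) justifying the vanishing of boundary terms in the integration by parts, which is standard under the regularity and decay assumptions implicit in the paper's setting, and (ii) ensuring $\rho/\rho^*$ is smooth enough for the chain-rule manipulation of $\nabla\sqrt{\rho/\rho^*}$, again standard for solutions of the Fokker--Planck equation with a strongly convex potential.
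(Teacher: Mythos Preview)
Your proposal is correct and follows essentially the same route as the paper: differentiate $\KL(\rho\|\rho^*)$ along the Fokker--Planck flow and integrate by parts to obtain $-\beta^{-1}\FI(\rho\|\rho^*)$, then substitute $g^2=\rho/\rho^*$ into the log-Sobolev inequality \eqref{Log_sobolev} to get the PL inequality \eqref{PL_ineq}. The only difference is that you spell out the chain-rule computation $\nabla g=\tfrac12\sqrt{\rho/\rho^*}\,\nabla\log(\rho/\rho^*)$ explicitly, whereas the paper leaves it implicit.
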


\begin{proof}
By the definition of KL divergence, since $\rho$ decays sufficiently fast and all derivatives are integrable, boundary terms vanish, we have
\[
\frac{d}{dt}\KL(\rho \|\rho^*) = \int \frac{\partial}{\partial t} \rho \log \frac{\rho}{\rho^*} \, dx = \beta^{-1} \int \nabla \cdot \left(\nabla \log \frac{\rho}{\rho^*}\rho\right) \log \frac{\rho}{\rho^*} \, dx = -\beta^{-1} \int \left\|\nabla \log \frac{\rho}{\rho^*}\right\|_2^2 \rho \, dx \,.
\]
Substituting $g^2 = \frac{\rho}{\rho^*}$ to \eqref{Log_sobolev}, we further obtain the following inequality relating KL divergence and Fisher information
\[
\KL(\rho \|\rho^*) = \int \log \frac{\rho}{\rho^*} \rho\, dx \leq \frac{1}{2 \beta \alpha} \int \left\|\nabla \log \frac{\rho}{\rho^*}\right\|_2^2 \rho \, dx = \frac{1}{2 \beta \alpha} \FI(\rho \|\rho^*) \,. 
\]
\end{proof}\hspace*{\fill}

Then, for the second-order time derivative of the KL divergence, we can derive the following two relations using the definition and integration by parts. These relations are crucial for the proof of Lemma \ref{lemma_KL_one_step}.

\begin{lemma}
    When $\rho^*$ satisfies the log-Sobolev inequality and $\rho$ is the solution to the Fokker-Planck equation, the second-order time derivative of KL divergence satisfies
    \label{Lemma_FI_derivative}
    \begin{enumerate}
        \item[(i)]
        \[
         \frac{d^2}{dt^2} \KL(\rho \|\rho^*) = 2 \beta^{-2} \int \left\|\nabla^2 \log \frac{\rho}{\rho^*} \right\|_{\mathrm{F}}^2 \rho \, dx + 2 \beta^{-1} \int \left\langle \nabla \log \frac{\rho}{\rho^*}, \nabla^2 V \nabla \log \frac{\rho}{\rho^*} \right\rangle \rho \, dx \,.
        \]
        \item[(ii)]
        \[
        \frac{d^2}{dt^2} \KL(\rho \|\rho^*) = -\beta^{-1} \frac{d}{dt} \FI(\rho \|\rho^*) \geq 2 \beta^{-1} \alpha \FI(\rho \|\rho^*) \geq 4  \alpha^2 \KL(\rho \|\rho^*) \,.
        \]
    \end{enumerate}
\end{lemma}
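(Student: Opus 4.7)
The plan is to reduce everything to the first-order identity already established in Lemma~\ref{Lemma_KL_derivative}. Writing $f:=\log(\rho/\rho^*)$ and differentiating $\tfrac{d}{dt}\KL(\rho\|\rho^*)=-\beta^{-1}\FI(\rho\|\rho^*)$ once more in $t$ immediately gives the equality $\tfrac{d^2}{dt^2}\KL(\rho\|\rho^*)=-\beta^{-1}\tfrac{d}{dt}\FI(\rho\|\rho^*)$ that appears in (ii), so both statements reduce to computing, and then bounding from below, $\tfrac{d}{dt}\FI(\rho\|\rho^*)$.

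For part (i), I would first rewrite the Fokker--Planck equation as $\partial_t\rho=\beta^{-1}\nabla\cdot(\rho\nabla f)$, and then use $\nabla\log\rho=\nabla f-\beta\nabla V$ to obtain the pointwise evolution $\partial_t f=\beta^{-1}\Delta f+\beta^{-1}|\nabla f|^2-\nabla V\cdot\nabla f$. Expanding $\tfrac{d}{dt}\int|\nabla f|^2\rho\,dx = 2\int\nabla f\cdot\nabla(\partial_t f)\rho\,dx + \int|\nabla f|^2\partial_t\rho\,dx$ and substituting this evolution produces several cubic terms of the form $\nabla f^T\nabla^2 f\nabla f$ and $\nabla f^T\nabla^2 f\nabla V$, a third-derivative piece $\nabla f\cdot\nabla\Delta f$, and an explicit $\nabla^2 V$ piece. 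The central trick is the Bochner identity $\tfrac12\Delta|\nabla f|^2=\|\nabla^2 f\|_{\mathrm{F}}^2+\nabla f\cdot\nabla\Delta f$, which trades $\nabla f\cdot\nabla\Delta f$ for $\|\nabla^2 f\|_{\mathrm{F}}^2$ modulo a pure-divergence term. After integrating $\Delta|\nabla f|^2$ against $\rho$ by parts (using once more $\nabla\log\rho=\nabla f-\beta\nabla V$) and regrouping, all three cubic $\nabla f^T\nabla^2 f\nabla f$ contributions and both $\nabla f^T\nabla^2 f\nabla V$ contributions cancel, leaving $\tfrac{d}{dt}\FI(\rho\|\rho^*) = -2\beta^{-1}\int\|\nabla^2 f\|_{\mathrm{F}}^2\rho\,dx - 2\int\nabla f^T\nabla^2 V\nabla f\,\rho\,dx$. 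Multiplying by $-\beta^{-1}$ yields (i).

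For part (ii), I would apply the strong log-concavity hypothesis $\nabla^2 V\succeq\alpha I$ pointwise inside the second integrand of (i), giving $\int\nabla f^T\nabla^2 V\nabla f\,\rho\,dx\geq \alpha\FI(\rho\|\rho^*)$. Dropping the manifestly non-negative Frobenius term then yields $\tfrac{d^2}{dt^2}\KL(\rho\|\rho^*)\geq 2\beta^{-1}\alpha\FI(\rho\|\rho^*)$, and a second application of the log-Sobolev inequality already invoked in Lemma~\ref{Lemma_KL_derivative} bounds $\FI$ from below by a constant multiple of $\KL$, closing the chain of inequalities.

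The main obstacle is the algebraic bookkeeping in (i): three separate cubic terms $\nabla f^T\nabla^2 f\nabla f$ and two terms $\nabla f^T\nabla^2 f\nabla V$ are produced, and their complete cancellation depends delicately on using $\nabla\log\rho=\nabla f-\beta\nabla V$ while integrating $\Delta|\nabla f|^2$ by parts against $\rho$. Once that cancellation is carried out carefully, part (ii) follows in one line from the Polyak--\L{}ojasiewicz form of the log-Sobolev inequality.
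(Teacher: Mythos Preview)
Your proposal is correct. The route differs in organization from the paper's but not in substance: you compute $\partial_t f$ pointwise and expand $\tfrac{d}{dt}\int|\nabla f|^2\rho\,dx$ directly, tracking the cubic cancellations by hand, whereas the paper introduces the generator $\mathcal{L}(v)=\beta^{-1}\Delta v-\nabla V\cdot\nabla v$ and the weighted divergence $\mathcal{D}^\beta$, rewrites $\tfrac{d^2}{dt^2}\KL$ as $-2\beta^{-1}\int\nabla f\cdot\nabla\mathcal{L}(f)\,\rho\,dx+\beta^{-1}\int\mathcal{L}\|\nabla f\|_2^2\,\rho\,dx$, and closes via the $\Gamma_2$-type identity $\tfrac12\mathcal{L}\|\nabla f\|_2^2-\nabla f\cdot\nabla\mathcal{L}(f)=\beta^{-1}\|\nabla^2 f\|_{\mathrm F}^2+\langle\nabla f,\nabla^2 V\nabla f\rangle$ (derived from the commutator $\nabla\mathcal{L}-\mathcal{L}\nabla=-\nabla^2 V\nabla$ together with Bochner). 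Both arguments hinge on exactly the same two ingredients---Bochner's formula and the integration by parts against $\nabla\log\rho=\nabla f-\beta\nabla V$---so the cancellations you describe are the same ones hidden inside the commutator identity; the paper's operator packaging makes them automatic, while your bare-hands expansion has the virtue of not requiring the reader to recognize the Bakry--\'Emery machinery. Part~(ii) is handled identically in both.
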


\begin{proof}
Using the fact that $\rho$ satisfies the Fokker-Planck equation, and recall our definitions of $\mathcal{D}^{\beta}$ and the generator $\mathcal{L}$ as used in Section~\ref{sec_conv_KL} which are
\begin{equation}
\label{recall_D_beta}
    \mathcal{D}^{\beta}(u) := \frac{\beta^{-1}}{\rho} \nabla \cdot (\nabla u \, \rho) \,, \quad \mathcal{L}(v) = \beta^{-1} \Delta v - \nabla V \cdot \nabla v \,.
\end{equation}

Using integration by parts, we then get
\begin{align}
\label{eqn:dt2KL_mid}
    &\frac{d^2}{dt^2} \KL(\rho \|\rho^*) = \frac{d}{dt} \int \frac{\partial}{\partial t} \rho \log \frac{\rho}{\rho^*} \, dx = \int \frac{\partial^2 \rho}{\partial t^2} \log \frac{\rho}{\rho^*} \, dx + \int \left| \frac{\partial \rho}{\partial t} \right|^2 \frac{1}{\rho} \, dx \\
    =& \int \left[ \beta^{-1} \Delta \frac{\partial \rho}{\partial t} + \nabla \cdot \left( \nabla V \frac{\partial \rho}{\partial t} \right) \right] \log \frac{\rho}{\rho^*} \, dx + \int \left| \mathcal{D}^{\beta} \left( \log \frac{\rho}{\rho^*} \right) \right|^2 \rho \, dx \notag \\
    =& \int \mathcal{D}^{\beta} \left( \log \frac{\rho}{\rho^*} \right) \mathcal{L} \left( \log \frac{\rho}{\rho^*} \right) \rho \, dx + \int \left| \mathcal{D}^{\beta} \left( \log \frac{\rho}{\rho^*} \right) \right|^2 \rho \, dx \notag \\
    =& 2 \int \mathcal{D}^{\beta} \left( \log \frac{\rho}{\rho^*} \right) \mathcal{L} \left( \log \frac{\rho}{\rho^*} \right) \rho \, dx + \int \mathcal{D}^{\beta} \left( \log \frac{\rho}{\rho^*} \right) \left[ \mathcal{D}^{\beta} \left( \log \frac{\rho}{\rho^*} \right) - \mathcal{L} \left( \log \frac{\rho}{\rho^*} \right) \right] \rho \, dx \notag \\
    =& -2 \beta^{-1} \int \nabla \log \frac{\rho}{\rho^*} \cdot \nabla \mathcal{L} \left( \log \frac{\rho}{\rho^*} \right) \rho \, dx + \beta^{-1} \int \mathcal{D}^{\beta} \left( \log \frac{\rho}{\rho^*} \right) \left\| \nabla \log \frac{\rho}{\rho^*} \right\|_2^2 \rho \, dx \notag \\
    =& -2 \beta^{-1} \int \nabla \log \frac{\rho}{\rho^*} \cdot \nabla \mathcal{L} \left( \log \frac{\rho}{\rho^*} \right) \rho \, dx + \beta^{-1} \int \mathcal{L} \left\| \nabla \log \frac{\rho}{\rho^*} \right\|_2^2 \rho \, dx \,, \notag
\end{align}
where the last inequality comes from the fact that $D^{\beta}\left(\logs\right)\rho = \beta^{-1}\Delta\rho + \nabla\cdot(\nabla V\rho)$.

The commutator between $\mathcal{L}$ and $\nabla$ for a smooth function $f$ can be written as
\[
\nabla \mathcal{L} f - \mathcal{L} \nabla f = \nabla \left( \beta^{-1} \Delta f - \nabla V \cdot \nabla f \right) - \beta^{-1} \Delta (\nabla f) + \nabla V \cdot \nabla^2 f = -\nabla^2 V \nabla f \,.
\]
Using Bochner's formula
\[
\frac{1}{2} \Delta \left( \|\nabla f\|_2^2 \right) = \Delta \nabla f \cdot \nabla f + \|\nabla^2 f\|_{\mathrm{F}}^2 \,,
\]
we note that
\begin{align}
\label{Gamma2_identity}
&\frac{1}{2} \mathcal{L} \|\nabla f\|_2^2 -\nabla f \cdot \nabla \mathcal{L}(f)  \\
= &\frac{1}{2} \mathcal{L} \|\nabla f\|_2^2  - \nabla f \cdot \mathcal{L} \nabla f + \left\langle \nabla f, \nabla^2 V \nabla f \right\rangle \notag \\
= &\beta^{-1} \left( \frac{1}{2} \Delta \|\nabla f\|_2^2 - \nabla f \cdot \Delta \nabla f \right) - \frac{1}{2} \nabla V \cdot \nabla \|\nabla f\|_2^2 + \langle \nabla f, \nabla^2 f \nabla V \rangle + \left\langle \nabla f, \nabla^2 V \nabla f \right\rangle \notag \\
= &\beta^{-1} \|\nabla^2 f\|_{\mathrm{F}}^2 + \left\langle \nabla f, \nabla^2 V \nabla f \right\rangle\,,\notag
\end{align}
Combining the final equality in \eqref{Gamma2_identity} with \eqref{eqn:dt2KL_mid}, we obtain the first statement.

Substituting $f = \log \frac{\rho}{\rho^*}$ into \eqref{Gamma2_identity} and combined with \eqref{eqn:dt2KL_mid}, we get
\begin{align*}
&\frac{d^2}{dt^2} \KL(\rho \|\rho^*) = 2 \beta^{-2} \int \left\|\nabla^2 \log \frac{\rho}{\rho^*} \right\|_{\mathrm{F}}^2 \rho \, dx + 2 \beta^{-1} \int \left\langle \nabla \log \frac{\rho}{\rho^*}, \nabla^2 V \nabla \log \frac{\rho}{\rho^*} \right\rangle \rho \, dx \\
\geq &2 \beta^{-1} \alpha \int \left\|\nabla \log \frac{\rho}{\rho^*} \right\|_2^2 \rho \, dx = 2 \beta^{-1} \alpha \FI(\rho \|\rho^*) \geq 4 \alpha^2 \KL(\rho \|\rho^*) \,,
\end{align*}
where we used $\nabla^2 V \succeq \alpha I_{d}$ and the final inequality follows from Lemma \ref{Lemma_KL_derivative}.
\end{proof}\hspace*{\fill}

\begin{lemma}
\label{lemma_second_order}
Let $\rho$ satisfy the Fokker-Planck equation \eqref{eqn_FP}.
Then, the second-order time derivative of the KL divergence satisfies
\begin{align*}
\frac{1}{2} \frac{d^2}{dt^2} \KL(\rho \| \rho^*) 
= & \int \left|\mathcal{D}^{\beta} \left(\log \frac{\rho}{\rho^*}\right)\right|^2 \rho \, dx   + \beta^{-2} \int \left\langle \nabla \log \frac{\rho}{\rho^*}, \, \nabla^2 \log \frac{\rho}{\rho^*} \nabla \log \frac{\rho}{\rho^*} \right\rangle \rho \, dx \,,
\end{align*} 
where $D^{\beta}$ is defined in \eqref{recall_D_beta}.
\end{lemma}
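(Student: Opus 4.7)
The natural starting point is the intermediate identity already established in the proof of Lemma \ref{Lemma_FI_derivative}, namely
\[
\frac{d^2}{dt^2}\KL(\rho\|\rho^*) = \int \mathcal{D}^{\beta}(u)\mathcal{L}(u)\rho\,dx + \int |\mathcal{D}^{\beta}(u)|^2\rho\,dx,
\]
with $u = \log\frac{\rho}{\rho^*}$. Rather than commuting $\nabla$ with $\mathcal{L}$ to bring in $\nabla^2 V$ as in the earlier derivation, my plan is to push both factors into $\mathcal{D}^{\beta}(u)$, paying a correction that naturally contains $\nabla^2 \log(\rho/\rho^*)$.

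The algebraic engine is the elementary identity
\[
\mathcal{D}^{\beta}(u) - \mathcal{L}(u) = \beta^{-1}\nabla u\cdot\nabla\log\rho + \nabla V\cdot \nabla u,
\]
which follows directly from the definitions $\mathcal{D}^{\beta}(u)=\beta^{-1}\Delta u+\beta^{-1}\nabla u\cdot\nabla\log\rho$ and $\mathcal{L}(u)=\beta^{-1}\Delta u-\nabla V\cdot\nabla u$. Writing $\nabla\log\rho = \nabla u + \nabla\log\rho^* = \nabla u - \beta\nabla V$ collapses this, when $u=\log(\rho/\rho^*)$, to
\[
\mathcal{D}^{\beta}(u) - \mathcal{L}(u) = \beta^{-1}|\nabla u|^2.
\]
Substituting $\mathcal{L}(u)=\mathcal{D}^{\beta}(u)-\beta^{-1}|\nabla u|^2$ into the starting identity gives
\[
\frac{d^2}{dt^2}\KL(\rho\|\rho^*) = 2\int |\mathcal{D}^{\beta}(u)|^2\rho\,dx - \beta^{-1}\int \mathcal{D}^{\beta}(u)\,|\nabla u|^2\,\rho\,dx.
\]

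To finish, the remaining cross term needs to be recast via integration by parts. Using $\mathcal{D}^{\beta}(u)\rho = \beta^{-1}\nabla\cdot(\nabla u\,\rho)$ and $\nabla|\nabla u|^2 = 2\nabla^2 u\,\nabla u$,
\[
\int \mathcal{D}^{\beta}(u)\,|\nabla u|^2\,\rho\,dx = \beta^{-1}\int \nabla\cdot(\nabla u\,\rho)\,|\nabla u|^2\,dx = -2\beta^{-1}\int \langle \nabla u,\,\nabla^2 u\,\nabla u\rangle\rho\,dx.
\]
Inserting this into the previous display and dividing by $2$ produces precisely the stated formula.

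The computation is essentially bookkeeping: the only point that needs care is tracking the two separate factors of $\beta^{-1}$ (one from $\mathcal{D}^{\beta}(u)-\mathcal{L}(u)=\beta^{-1}|\nabla u|^2$, one from the integration by parts), whose combined effect produces the overall $\beta^{-2}$ in front of the Hessian term. Regularity is not a genuine obstacle, as smoothness of $\rho$ and sufficient decay at infinity are already implicit in working with the Fokker-Planck flow of a strongly log-concave target.
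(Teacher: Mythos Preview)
Your proposal is correct and follows essentially the same approach as the paper: both start from the identity $\frac{d^2}{dt^2}\KL(\rho\|\rho^*)=\int \mathcal{D}^{\beta}(u)\mathcal{L}(u)\rho\,dx+\int|\mathcal{D}^{\beta}(u)|^2\rho\,dx$, use the relation $\mathcal{D}^{\beta}(u)-\mathcal{L}(u)=\beta^{-1}|\nabla u|^2$, and finish with the same integration-by-parts computation for $\int \mathcal{D}^{\beta}(u)|\nabla u|^2\rho\,dx$. The only cosmetic difference is that you substitute $\mathcal{L}(u)=\mathcal{D}^{\beta}(u)-\beta^{-1}|\nabla u|^2$ directly, whereas the paper rewrites $\int|\mathcal{D}^{\beta}(u)|^2\rho\,dx$ by splitting off $\frac{1}{2}\frac{d^2}{dt^2}\KL$; the algebra is identical.
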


\begin{proof}
Firstly, by the definition of KL divergence and computation in \eqref{eqn:dt2KL_mid}, we have
\begin{align*}
\frac{d^2}{dt^2} \KL(\rho \| \rho^*) 
&= \int \mathcal{D}^{\beta} \left( \log \frac{\rho}{\rho^*} \right) \mathcal{L} \left( \log \frac{\rho}{\rho^*} \right) \rho \, dx + \int \left| \mathcal{D}^{\beta} \left( \log \frac{\rho}{\rho^*} \right) \right|^2 \rho \, dx \,.
\end{align*}
Moreover, we note the following relationship
\[
\mathcal{D}^{\beta} \left( \log \frac{\rho}{\rho^*} \right) - \mathcal{L} \left( \log \frac{\rho}{\rho^*} \right) = \beta^{-1} \left\| \nabla \log \frac{\rho}{\rho^*} \right\|_2^2 \,.
\]
Thus, we can express
\begin{align*}
\int \left| \mathcal{D}^{\beta} \left( \log \frac{\rho}{\rho^*} \right) \right|^2 \rho \, dx 
&= \frac{1}{2} \left[ \int \mathcal{D}^{\beta} \left( \log \frac{\rho}{\rho^*} \right) \mathcal{L} \left( \log \frac{\rho}{\rho^*} \right) \rho \, dx + \int \left| \mathcal{D}^{\beta} \left( \log \frac{\rho}{\rho^*} \right) \right|^2 \rho \, dx \right] \\
&\quad + \frac{1}{2} \int \left( \mathcal{D}^{\beta} \left( \log \frac{\rho}{\rho^*} \right) - \mathcal{L} \left( \log \frac{\rho}{\rho^*} \right) \right) \mathcal{D}^{\beta} \left( \log \frac{\rho}{\rho^*} \right) \rho \, dx \\
&= \frac{1}{2} \frac{d^2}{dt^2} \KL(\rho \| \rho^*) + \frac{\beta^{-1}}{2} \int \left\| \nabla \log \frac{\rho}{\rho^*} \right\|_2^2 \mathcal{D}^{\beta} \left( \log \frac{\rho}{\rho^*} \right) \rho \, dx \\
&= \frac{1}{2} \frac{d^2}{dt^2} \KL(\rho \| \rho^*) - \beta^{-2} \int \left\langle \nabla \log \frac{\rho}{\rho^*}, \, \nabla^2 \log \frac{\rho}{\rho^*} \nabla \log \frac{\rho}{\rho^*} \right\rangle \rho \, dx \,,
\end{align*}
where the last equality uses the identity for any smooth function $u$
\begin{align*}
&\int \|\nabla u\|_2^2 \mathcal{D}^{\beta}(u) \rho \, dx  
= \beta^{-1} \int \|\nabla u\|_2^2 \nabla \cdot (\nabla u \rho) \, dx \\
=& -\beta^{-1} \int \nabla \|\nabla u\|_2^2 \cdot \nabla u \rho \, dx
= -2 \beta^{-1} \int \left\langle \nabla u, \, \nabla^2 u \nabla u \right\rangle \rho \, dx \,.
\end{align*}
\end{proof}\hspace*{\fill}

\subsubsection{Convergence of fourth order term}
We show the fourth power of $\|\nabla\logs\|$ also converges exponentially in Wasserstein space. The below result is used in the proof of Theorem \ref{Thm_KL_decay}. 
\begin{lemma}
\label{Lemma_4th_order}
When $\rho^*$ is strongly log-concave with $ \nabla^2 V \succeq \alpha I$ and $\rho$ is the solution to the Fokker-Planck equation, we have
\[
    \frac{\partial}{\partial t}\int \left\|\nabla\logs\right\|_2^4 \rho dx   \leq -4\alpha  \int\left\|\nabla\logs\right\|_2^4 \rho dx \,.
    \]
\end{lemma}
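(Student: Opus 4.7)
The plan is to set $f := \log\frac{\rho}{\rho^*}$ and compute $\frac{d}{dt}\int \|\nabla f\|_2^4 \rho \, dx$ directly via product rule, obtaining two contributions: one from differentiating $\|\nabla f\|^4$ and one from differentiating $\rho$. Since the Fokker--Planck equation can be rewritten as $\partial_t\rho = \beta^{-1}\nabla\cdot(\rho\nabla f)$, taking $\partial_t\log\rho = \partial_t\rho/\rho$ yields the evolution equation
\begin{equation*}
\partial_t f = \mathcal{L} f + \beta^{-1}\|\nabla f\|_2^2,
\end{equation*}
where $\mathcal{L} v := \beta^{-1}\Delta v - \nabla V\cdot\nabla v$ is the generator from \eqref{recall_D_beta}. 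Taking gradients, $\nabla\partial_t f = \nabla\mathcal{L}f + 2\beta^{-1}\nabla^2 f\,\nabla f$.

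Next, I would integrate by parts twice. For the $\partial_t\rho$ term, a single integration by parts converts $\int \|\nabla f\|^4 \nabla\cdot(\rho\nabla f)\,dx$ into $-4\int \|\nabla f\|^2 \langle\nabla f,\nabla^2 f\nabla f\rangle \rho\,dx$. For the derivative-of-$\|\nabla f\|^4$ term, I would use the commutator identity $\nabla \mathcal{L} f = \mathcal{L}\nabla f - \nabla^2 V\nabla f$ (already invoked in the proof of Lemma~\ref{Lemma_FI_derivative}) and then the standard $\Gamma_2$ / Bochner identity
\begin{equation*}
\tfrac{1}{2}\mathcal{L}\|\nabla f\|_2^2 = \langle\nabla f,\mathcal{L}\nabla f\rangle + \beta^{-1}\|\nabla^2 f\|_{\mathrm F}^2
\end{equation*}
to rewrite $\langle\nabla f,\nabla\mathcal{L}f\rangle = \tfrac{1}{2}\mathcal{L}\|\nabla f\|^2 - \beta^{-1}\|\nabla^2 f\|_{\mathrm F}^2 - \langle\nabla f,\nabla^2 V\nabla f\rangle$. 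A further integration by parts against $\rho$ of the term $\int \|\nabla f\|^2\,\mathcal{L}\|\nabla f\|^2\,\rho\,dx$, combined with the crucial algebraic identity $\beta^{-1}\nabla\log\rho + \nabla V = \beta^{-1}\nabla f$, gives an explicit expression.

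Assembling these pieces, the two contributions involving $\langle\nabla f,\nabla^2 f\nabla f\rangle$ cancel, and I expect to land on the clean identity
\begin{equation*}
\frac{d}{dt}\int \|\nabla f\|_2^4 \rho\,dx = -8\beta^{-1}\!\int \|\nabla^2 f\,\nabla f\|_2^2\,\rho\,dx - 4\beta^{-1}\!\int \|\nabla f\|_2^2\,\|\nabla^2 f\|_{\mathrm F}^2\,\rho\,dx - 4\!\int \|\nabla f\|_2^2\,\langle\nabla f,\nabla^2 V\nabla f\rangle \rho\,dx.
\end{equation*}
The first two terms are non-positive, and the strong log-concavity hypothesis $\nabla^2 V\succeq\alpha I$ gives $\langle\nabla f,\nabla^2 V\nabla f\rangle \geq \alpha\|\nabla f\|_2^2$, so the third term is bounded above by $-4\alpha\int\|\nabla f\|_2^4 \rho\,dx$, completing the proof.

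The main obstacle is bookkeeping: keeping track of the two distinct cubic-in-$\nabla f$, linear-in-$\nabla^2 f$ terms (one from each parent contribution) and verifying that they cancel exactly; the algebraic identity $\beta^{-1}\nabla\log\rho+\nabla V = \beta^{-1}\nabla f$ is what makes the cancellation occur, and this is the step I would double-check most carefully.
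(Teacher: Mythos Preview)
Your proposal is correct and lands on exactly the same three-term identity as the paper,
\[
\frac{d}{dt}\int \|\nabla f\|_2^4 \rho\,dx = -8\beta^{-1}\!\int \|\nabla^2 f\,\nabla f\|_2^2\rho\,dx - 4\beta^{-1}\!\int \|\nabla f\|_2^2\|\nabla^2 f\|_{\mathrm F}^2\rho\,dx - 4\!\int \|\nabla f\|_2^2\langle\nabla f,\nabla^2 V\nabla f\rangle\rho\,dx,
\]
after which $\nabla^2 V\succeq\alpha I$ finishes the argument. The overall strategy (product rule, then integration by parts against $\partial_t\rho=\beta^{-1}\nabla\cdot(\rho\nabla f)$) is the same as the paper's.

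The organizational difference is worth noting. You route the computation through the evolution equation $\partial_t f=\mathcal{L}f+\beta^{-1}\|\nabla f\|_2^2$, the commutator $\nabla\mathcal{L}f=\mathcal{L}\nabla f-\nabla^2 V\nabla f$, and the Bochner identity, then integrate $\int\|\nabla f\|^2\,\mathcal{L}\|\nabla f\|^2\,\rho\,dx$ by parts using $\beta^{-1}\nabla\log\rho+\nabla V=\beta^{-1}\nabla f$. The paper instead substitutes $\nabla(\partial_t\rho/\rho)$ directly and performs several rounds of raw integration by parts with explicit index bookkeeping (equations \eqref{eqn:4th_order_3}--\eqref{eqn:4th_order_8}). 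Your Bakry--\'Emery/$\Gamma_2$ packaging is cleaner and reuses the identities already established in Lemma~\ref{Lemma_FI_derivative}; the paper's version is more self-contained but longer. One small bookkeeping remark: there are actually \emph{three} cubic cross terms $\|\nabla f\|^2\langle\nabla f,\nabla^2 f\nabla f\rangle$ (one from $\partial_t\rho$, one from the $2\beta^{-1}\nabla^2 f\nabla f$ piece of $\nabla\partial_t f$, and one from integrating $\int\|\nabla f\|^2\mathcal{L}\|\nabla f\|^2\rho\,dx$ by parts), with coefficients $-4+8-4=0$; your description says ``two contributions,'' but the cancellation you anticipate does hold.
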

\begin{proof}
Taking the time derivative directly, we have
\begin{align}
    \label{eqn:4th_order_2}
     & \frac{\partial}{\partial t}\int \left\|\nabla\logs\right\|_2^4 \rho dx  = 4\int \left\|\nabla\logs\right\|_2^2\left\langle \nabla\logs, \nabla \frac{\partial \rho/\partial t}{\rho}\right\rangle \rho dx  + \int \left\|\nabla\logs\right\|_2^4 \frac{\partial\rho}{\partial t}dx\, .
\end{align}
Using the identity $\frac{\partial\rho}{\partial t} = \beta^{-1}\nabla\cdot\left(\nabla\logs\rho\right)$ and integration by parts, the first term above equals to
\begin{align}
\label{eqn:4th_order_3}
&4\int \left\|\nabla\logs\right\|_2^2\left\langle \nabla\logs,\, \nabla \frac{\partial \rho/\partial t}{\rho}\right\rangle \rho dx \\
=&-4\int \nabla\cdot \left(\left\|\nabla\logs\right\|_2^2\nabla\logs\right) \frac{\partial \rho/\partial t}{\rho}\rho dx  - 4\int \left\|\nabla\logs\right\|_2^2\left\langle  \nabla\logs , \frac{\nabla\rho}{\rho}\right\rangle \frac{\partial\rho}{\partial t}dx\notag \\
  =&-4\beta^{-1}\int \nabla\cdot \left(\left\|\nabla\logs\right\|_2^2\nabla\logs\right) \nabla\cdot\left(\nabla\logs\rho\right) dx\notag\\
  &- 4\beta^{-1}\int \left\|\nabla\logs\right\|_2^2\left\langle  \nabla\logs , \frac{\nabla\rho}{\rho}\right\rangle \nabla\cdot\left(\nabla\logs\rho\right)dx\notag\\
  =& 4\beta^{-1}\int\left\langle\nabla\nabla\cdot \left(\left\|\nabla\logs\right\|_2^2\nabla\logs\right)  \,,\nabla\logs\right\rangle \rho dx\notag \\
 & +4\beta^{-1}\int \left\langle  \bigg[\nabla\left\langle \left\|\nabla\logs\right\|_2^2 \nabla\logs , \,\frac{\nabla\rho}{\rho}\right\rangle\bigg] , \,\nabla\logs \right\rangle\rho  dx\, .\notag
\end{align}
For the last term in \eqref{eqn:4th_order_3}, it can be simplified as
\begin{align}
\label{eqn:4th_order_4}
&4\beta^{-1}\int \left\langle  \bigg[\nabla\left\langle \left\|\nabla\logs\right\|_2^2 \nabla\logs ,\, \frac{\nabla\rho}{\rho}\right\rangle\bigg] , \nabla\logs \right\rangle\rho  dx  \\  
=&4\beta^{-1}\int\left\langle\left\langle\nabla\left(\left\|\nabla\logs\right\|_2^2\nabla\logs\right), \frac{\nabla\rho}{\rho}\right\rangle,\nabla\logs\right\rangle \rho  dx\notag \\
&+4\beta^{-1}\int\left\langle\left\langle\left\|\nabla\logs\right\|_2^2\nabla\logs,\nabla\bigg(\frac{\nabla\rho}{\rho}\bigg)\right\rangle ,\nabla\logs\right\rangle \rho  dx\notag\\
 =&  4\beta^{-1}\int\left\langle \bigg(\nabla\bigg(\left\|\nabla\logs\right\|_2^2\nabla\logs\bigg)\bigg)^T\nabla\logs,\nabla\rho\right\rangle dx \notag\\&+4\beta^{-1}\int \left\|\nabla\logs\right\|_2^2\left\langle  \nabla\logs, \nabla\bigg(\frac{\nabla\rho}{\rho}\bigg)\nabla\logs\right\rangle dx \,,\notag
\end{align}
where we have used for $u = \left\|\nabla\logs\right\|_2^2 \nabla\logs$, $v = \frac{\nabla\rho}{\rho}$
\[
(\nabla (u\cdot v))_i = \sum_j\bigg[\frac{\partial u_j}{\partial x_i}v_j + u_j\frac{\partial v_j}{\partial x_i}\bigg] =  ((\nabla u) \cdot v)_i +   (u\cdot (\nabla v))_i \quad \text{where } (\nabla u)_{ij} = \frac{\partial u_j}{\partial x_i} \,,
\]
in first equality. We also used
\[
 (A\cdot u) \cdot v = \sum_{ij}a_{ij}u_jv_i = \sum_j(\sum_i a_{ij} v_i)u_j = (A^T v) \cdot u\,, \quad (u\cdot A)\cdot v =  (u^TA)v = u\cdot (Av) \,,
\]
in the second equality.

Noting that $\nabla(\nabla \rho/\rho) = \nabla^2\log\rho$, the last term in \eqref{eqn:4th_order_4} equals to
\begin{align}
\label{eqn:4th_order_5}
&4\beta^{-1}\int \left\|\nabla\logs\right\|_2^2\left\langle \nabla\logs, \nabla\bigg(\frac{\nabla\rho}{\rho}\bigg)\nabla\logs\right\rangle \rho dx\\ =& 4\beta^{-1}\int \left\|\nabla\logs\right\|_2^2\left\langle  \nabla\logs, \nabla^2\log\rho\nabla\logs\right\rangle \rho dx \,.\notag
\end{align}
 
Moreover, applying integration by parts with respect to $\nabla\rho$ to the first term in the last line of \eqref{eqn:4th_order_4}, we get
\begin{align}
\label{eqn:4th_order_6}
    &4\beta^{-1}\int\left\langle \bigg(\nabla\bigg(\left\|\nabla\logs\right\|_2^2\nabla\logs\bigg)\bigg)^T\nabla\logs,\,\nabla\rho\right\rangle dx\\
    =& -4\beta^{-1}\int \nabla\cdot\bigg[\bigg(\nabla\bigg(\left\|\nabla\logs\right\|_2^2\nabla\logs\bigg)\bigg)^T\nabla\logs\bigg] \rho dx  \notag\\
    =& -4\beta^{-1}\int\left\langle \nabla\cdot \bigg(\nabla\bigg(\left\|\nabla\logs\right\|_2^2\nabla\logs\bigg)^T\bigg) ,\,\nabla\logs\right\rangle\rho dx\notag\\
    &- 4\beta^{-1}\int \Tr\bigg\{\nabla\bigg(\left\|\nabla\logs\right\|_2^2\nabla\logs\bigg)\nabla^2\logs\bigg\}\rho dx \, ,\notag
\end{align}
where we used for $A = \nabla\left(\left\|\nabla\logs\right\|_2^2\nabla\logs\right)$, $b = \nabla\logs$, and 
\begin{align*}
&\nabla\cdot (A^Tb) =   \sum_j \frac{\partial\sum_i a_{ij}b_i}{\partial x_j} = \sum_{ij}\bigg(\frac{\partial a_{ij}}{\partial x_j}b_i + a_{ij}\frac{\partial b_i}{\partial x_j}\bigg) = (\nabla\cdot A^T)\cdot b + \sum_{ij}a_{ij}(\nabla b)^T_{ij}\\ = &(\nabla\cdot A^T)\cdot b + \text{Tr}\{A(\nabla b)\}\,,\quad (A_{ij}) = a_{ij}\,,\quad \nabla \cdot A^T = \sum_j\frac{\partial a_{ij}}{\partial x_j}\,.
\end{align*}

Next noting that when $u = \left\|\nabla\logs\right\|_2^2\nabla\logs$
\[
(\nabla\cdot (\nabla u)^T)_i = \sum_j \frac{\partial (\nabla u)_{ij}}{\partial x_j} =  \sum_j\frac{\partial}{\partial x_j} \frac{\partial u_{j}}{\partial x_i} = \frac{\partial}{\partial x_i}\sum_j \frac{\partial u_j}{\partial x_j} = (\nabla(\nabla\cdot u))_i\,,
\]
hence the first term in both the last line of \eqref{eqn:4th_order_6} and \eqref{eqn:4th_order_3} cancels. Now, it only remains to look at the last term in \eqref{eqn:4th_order_6}. Firstly, for a scalar function $\zeta = \|\nabla\logs\|_2^2$ and a vector function $u = \nabla\logs$, we have
\[
(\nabla(\zeta u))_{ij} = \frac{\partial \zeta u_j}{\partial x_i} = \frac{\partial \zeta}{\partial x_i}u_j+\zeta\frac{\partial u_j}{\partial x_i}  = (\nabla\zeta u^T)_{ij}+\zeta(\nabla u)_{ij} \,.
\]
This implies
\begin{align}
\label{eqn:4th_order_7}
   & - 4\beta^{-1}\int \Tr\bigg\{\nabla\bigg(\left\|\nabla\logs\right\|_2^2\nabla\logs\bigg)\nabla^2\logs\bigg\}\rho dx\\
    =& - 4\beta^{-1}\int \Tr\bigg\{\bigg(\nabla\left\|\nabla\logs\right\|_2^2 \left(\nabla\logs\right)^T + \left\|\nabla\logs\right\|_2^2\nabla^2\logs\bigg)\nabla^2\logs\bigg\}\rho dx\notag\\
        =& - 8\beta^{-1} \int \text{Tr}\bigg\{ \left(\nabla^2\logs\cdot \nabla\logs\right)\left(\nabla^2\logs\cdot \nabla\logs\right)^T\bigg\}\rho dx \notag\\
        &-4\beta^{-1}\int \left\|\nabla\logs\right\|_2^2\left\|\nabla^2\logs\right\|_{\mathrm{F}}^2\rho dx \notag\\
    =&- 8\beta^{-1}\int \left\|\nabla^2\logs\nabla\logs\right\|_2^2\rho dx -4\beta^{-1}\int \left\|\nabla\logs\right\|_2^2\left\|\nabla^2\logs\right\|_{\mathrm{F}}^2\rho dx \,,\notag
\end{align}
where we also used the fact that $\nabla^2\logs$ is symmetric and $Abb^TA^T = Ab(Ab)^T$. 

Finally, the second term in \eqref{eqn:4th_order_2} will be
\begin{align}
\label{eqn:4th_order_8}
    \int \left\|\nabla\logs\right\|_2^4 \frac{\partial\rho}{\partial t}dx = -4\beta^{-1}\int \left\|\nabla\logs\right\|^2_2\left\langle\nabla\logs ,\nabla^2\logs\nabla\logs\right\rangle\rho dx\,.
\end{align}

Combing equations \eqref{eqn:4th_order_2} to \eqref{eqn:4th_order_8}, we arrive
\begin{align*}
    & \frac{\partial}{\partial t}\int \left\|\nabla\logs\right\|_2^4 \rho dx  = 
      -4\beta^{-1}\int \left\|\nabla^2 \logs\right\|_{\mathrm{F}}^2\left\|\nabla\logs\right\|_2^2 \rho dx 
      \\&\qquad - 4 \int \left\|\nabla\logs\right\|_2^2\left\langle \nabla\logs,\, \nabla^2 V \nabla\logs\right\rangle\rho dx - 8\beta^{-1} \int \left\|\nabla^2\logs\nabla\logs\right\|_2^2\rho dx\,.
\end{align*}
    
The desired result can now be achieved by noting the condition that $\nabla^2 V\succeq \alpha I$.
\end{proof} \hspace*{\fill}

Hence, for the density function evolving according to \eqref{eqn:rho_k_backward}, we have the following result.
\begin{lemma}
\label{lemma_4th_order_discrete}
There exists $C<\infty$, independent of $k$ and $h$, such that for all sufficiently small $h>0$,
\begin{equation}
\label{eqn:4th_exp}
\int\left\|\nabla\log\frac{\rho_k}{\rho^*}\right\|_2^4 \rho_k \, dx
\;\le\;
e^{-4\alpha hk}
\int\left\|\nabla\log\frac{\rho_0}{\rho^*}\right\|_2^4 \rho_0 \, dx
\;+\; C h \, .
\end{equation}
In particular, the fourth-order moment is uniformly bounded in $k$.
\end{lemma}

\begin{proof}
Let
\[
F(\rho) := \int\left\|\nabla\log\frac{\rho}{\rho^*}\right\|_2^4 \rho \, dx .
\]
 
By Lemma~\ref{Lemma_4th_order}, for the exact Fokker--Planck flow $\rho_t$ we have
\[
\frac{d}{dt} F(\rho_t) \le -4\alpha F(\rho_t) .
\]
Hence,
\begin{equation}
\label{eq:cont_decay}
F(\rho_{t+h}) \le e^{-4\alpha h} F(\rho_t) .
\end{equation}

By the weak one-step expansion in Lemma~\ref{Lemma_rho_k_iteration} and the uniform $C^{3,1}$ bounds on $\rho_k$ (using $\rho_k$ as the test function), we have
\[
F(\rho_{k+1})
= F(\rho_{t_k+h}) + O(h^2),
\]
where $\rho_{t_k+h}$ is the exact Fokker--Planck solution starting from $\rho_k$.
Combining with \eqref{eq:cont_decay} yields
\[
F(\rho_{k+1}) \le e^{-4\alpha h} F(\rho_k) + C h^2 .
\]

Iterating the above inequality gives
\[
F(\rho_k)
\le e^{-4\alpha hk} F(\rho_0)
+ C h^2 \sum_{j=0}^{k-1} e^{-4\alpha h j}
\le e^{-4\alpha hk} F(\rho_0) + C h ,
\]
since $\sum_{j\ge0} e^{-4\alpha h j} \le (4\alpha h)^{-1}$.
\end{proof}

\subsection{Postponed proof and additional Lemma used in Section~\ref{sec_conv_KL}}
\label{App_sec_KL}

\begin{proof}[Proof of Lemma~\ref{Lemma_rho_k_iteration}]
\emph{Reduction by moment condition.}
Choose $\chi_R\in C_c^\infty(\R^d)$ with $\chi_R\equiv1$ on $B_R$ and $\chi_R\equiv0$ on $B_{2R}^c$, and set $u_R=\chi_R u$.
The moment condition in Lemma~\ref{lem:moment-p-discrete} imply
\[
\langle u_R,\rho_{k+1}-\rho_k\rangle\to 
\langle u,\rho_{k+1}-\rho_k\rangle \,,
\qquad R\to\infty \,.
\]
Hence, it suffices to work with $u\in C^{3,1}(U)$.

\medskip
\emph{Step 1. Taylor expansion.}
From the BRWP update,
\[
\langle u,\rho_{k+1}\rangle
=\int u\!\left(x-h\nabla\!\Bigl(\beta^{-1}\log\!\frac{\widetilde\rho_{k+1}}{\rho^*}\Bigr)(x)\right)\rho_k(x)\,dx \,.
\]
Taylor expansion gives
\[
u(x-h\widetilde\Phi_k')
=
u -h\,\widetilde\Phi_k'\!\cdot\nabla u
+ \frac{h^2}{2}\,\widetilde\Phi_k'{}^{\!\top}\nabla^2u\,\widetilde\Phi_k'
+R_3 \,,\qquad |R_3|\le Ch^3\|u\|_{C^{3,1}} \,,
\]
where
\[
\widetilde\Phi_k'
:=\nabla\!\Bigl(\beta^{-1}\log\!\frac{\widetilde\rho_{k+1}}{\rho^*}\Bigr) \,.
\]
Thus
\begin{align}
\label{eq:weak-Taylor-correct}
\langle u,\rho_{k+1}-\rho_k\rangle
&=
-h\,\langle\widetilde\Phi_k'\!\cdot\nabla u\,,\,\rho_k\rangle
+\frac{h^2}{2}\,\langle\widetilde\Phi_k'{}^{\!\top}\nabla^2u\,\widetilde\Phi_k'\,,\,\rho_k\rangle
+O(h^3\|u\|_{C^{3,1}}) \,.
\end{align}

\medskip
\emph{Step 2. Drift decomposition via the weak oracle.}
Write
\[
\nabla\!\Bigl(\beta^{-1}\log\!\frac{\widetilde\rho_{k+1}}{\rho^*}\Bigr)
=
\nabla\!\Bigl(\beta^{-1}\log\!\frac{\rho_k}{\rho^*}\Bigr)
+ h\beta^{-1}\nabla\!\Bigl(\tfrac{\mathcal L^{\!*}\rho_k}{\rho_k}\Bigr)
+ \varepsilon_{k+1} \,,
\]
with the score oracle Assumption~\ref{assump:SO-weak}
\[
\big|\langle w\!\cdot\varepsilon_{k+1}\,,\,\rho_k\rangle\big|
\le Ch^2\|w\|_{C^{1,1}(U)} \,.
\]
Applying this with $w=\nabla u$ and $w=(\nabla^2u)\,\nabla(\beta^{-1}\log(\rho_k/\rho^*))$
shows that $\varepsilon_{k+1}$ contributes only $O(h^3)$.

\medskip
\emph{Step 3. The ideal drift.}
Substituting into \eqref{eq:weak-Taylor-correct} yields
\begin{align}
\langle u,\rho_{k+1}-\rho_k\rangle
&=
-h\,\Big\langle
\nabla\!\Bigl(\beta^{-1}\log\!\frac{\rho_k}{\rho^*}\Bigr)\!\cdot\nabla u \,,\,
\rho_k\Bigr\rangle
\\[-2pt]
&\quad
-h^2\beta^{-1}
 \Bigl\langle
 \nabla\!\Bigl(\tfrac{\mathcal L^{\!*}\rho_k}{\rho_k}\Bigr)\!\cdot\nabla u \,,\,
 \rho_k\Bigr\rangle
+ \frac{h^2}{2}\,
   \Bigl\langle
   \Phi_k'{}^{\!\top}\nabla^2u\,\Phi_k' \,,\,\rho_k\Bigr\rangle
+O(h^3) \,,\notag
\end{align}
where $\Phi_k'=\nabla(\beta^{-1}\log(\rho_k/\rho^*))$.

\medskip
\emph{Hessian term.}
As usual,
\[
\Phi_k'{}^{\!\top}\nabla^2u\,\Phi_k'
=\sum_{i,j}\partial_{ij}u\,\Phi'_{k,i}\Phi'_{k,j} \,,
\]
and integrating by parts gives
\[
\big\langle
\Phi_k'{}^{\!\top}\nabla^2u\,\Phi_k' \,,\,\rho_k
\big\rangle
=
-\,\big\langle\nabla u \,,\,\rho_k\zeta_k\big\rangle \,,
\]
where $\zeta_k$ is defined in the lemma. Substituting completes the proof.
\end{proof}

\begin{proof}[Proof of Lemma \ref{Lemma_KL_decay_pre}.]
Using the Taylor expansion of the $\log$ function and the expansion of $\rho_{k+1}$, we obtain
\begin{align*}
\log \rho_{k+1}
=&\;\log\rho_k
+ h\,\mathcal{D}_{k}^{\beta}\!\left(\log\frac{\rho_k}{\rho^*}\right)
- \frac{h^2}{2}\!\left(\mathcal{D}_{k}^{\beta}\!\left(\log\frac{\rho_k}{\rho^*}\right)\right)^{\!2} \\
&\quad
+ h^2\,\mathcal{D}_{k}^{\beta}\!\circ \mathcal{D}_{k}^{\beta}\!\left(\log\frac{\rho_k}{\rho^*}\right)
+ \frac{h^2}{2\rho_k}\,\nabla\!\cdot(\zeta \rho_k)
+ \mathcal{O}(h^3)\,.
\end{align*}

Multiplying by $\rho_{k+1}$ and expanding, we obtain
\begin{align*}
\rho_{k+1}&\log\frac{\rho_{k+1}}{\rho^*}
= \;\rho_k\logks
+\frac{h^2}{2}\,\rho_k\!\left(\mathcal{D}_{k}^{\beta}\!\left(\logks\right)\right)^{\!2} \\
& +\Bigg[
h\,\mathcal{D}_{k}^{\beta}\!\left(\logks\right)
+h^2\,\mathcal{D}_{k}^{\beta}\!\circ\mathcal{D}_{k}^{\beta}\!\left(\logks\right)
+ \frac{h^2}{2\rho_k}\,\nabla\!\cdot(\zeta \rho_k)
\Bigg]
\rho_k\,(1+\logks)
+ \mathcal{O}(h^3)\,.
\end{align*}

Substituting into the KL formula gives
\begin{align}
\label{KL_expansion_fixed}
&\KL(\rho_k\|\rho^*) - \KL(\rho_{k+1}\|\rho^*)
\\
=&\;
-h\!\left[
\int \!\Big(
\mathcal{D}_{k}^{\beta}\!\left(\logks\right)\rho_k
+ h\,\mathcal{D}_{k}^{\beta}\!\circ\mathcal{D}_{k}^{\beta}\!\left(\logks\right)\rho_k
- \frac{h}{2}\,\nabla\!\cdot(\zeta\rho_k)
\Big)
\,(1+\logks)\,dx
\right]\notag\\
&- \frac{h^2}{2}\!\int\!\left(\mathcal{D}_{k}^{\beta}\!\left(\logks\right)\right)^{\!2}\rho_k\,dx
+ \mathcal{O}(h^3)\,.\notag
\end{align}

\medskip
\noindent
\emph{(1) The $\mathcal{O}(h)$ term.}
Using integration by parts and the definition of $\mathcal{D}_k^\beta$,
\[
-\!\int \mathcal{D}_{k}^{\beta}\!\left(\logks\right)
\,(1+\logks)\,\rho_k\,dx
= \beta^{-1}\!\int \|\nabla \logks\|^2 \rho_k\,dx
= \beta^{-1} I(\rho_k\|\rho^*)\,.
\]

\medskip
\emph
\textbf{(2) The $\zeta$-term.}
Recall $\widetilde{\phi}=\beta^{-1}\log(\rho_k/\rho^*)+\mathcal{O}(h)$ and
$\zeta=\beta^{-2}\nabla\!\cdot(\Psi\rho_k)$, where
$\Psi=\nabla\logks(\nabla\logks)^{\!T}$.
We compute
\begin{align*}
\int \langle\zeta , \nabla\logks\rangle\rho_kdx
=&\;\beta^{-2}\!\int\!
\left\langle \nabla\logks,\,
\nabla\!\cdot(\Psi\rho_k)\right\rangle dx
\\
=&\;
-\beta^{-2}\!\int\!
\left\langle \nabla\logks,\,
\nabla^2\logks\,\nabla\logks\right\rangle\rho_k\,dx\,,
\end{align*}
where the last equality uses integration by parts.

\medskip
\noindent
\emph{(3) The $h^2$ term from $\mathcal{D}_k^\beta\circ\mathcal{D}_k^\beta$.}
Using integration by parts:
\begin{align*}
&\int \mathcal{D}_k^\beta\!\circ\mathcal{D}_k^\beta\!\left(\logks\right)
\,(1+\logks)\,\rho_k\,dx
\\
=&\;
\int \beta^{-1}\mathcal{D}_k^\beta\!\left(\logks\right)\,
\nabla\!\cdot\!\left(\nabla\logks\,\rho_k\right)\,dx
=
\int\!\left(\mathcal{D}_{k}^{\beta}\!\left(\logks\right)\right)^{\!2}\rho_k\,dx\,.
\end{align*}

\medskip
Putting (1)-(3) into \eqref{KL_expansion_fixed} yields exactly the one-step KL expansion stated in \eqref{eqn:decay_KL_1}\,.

\end{proof}

\begin{proof}[Proof of Lemma~\ref{lemma_KL_one_step}]
In the following proof, all time derivatives with respect to $\rho_k$ are taken along the Fokker-Planck flow starting from the initial density $\rho_k$.
Substituting the expression \eqref{dt2_KL} into \eqref{eqn:decay_KL_1}, we obtain
\begin{align}
\label{eqn:decay_KL_2}
&\KL(\rho_k\|\rho^*) - \KL(\rho_{k+1}\|\rho^*) \\
\geq\ &
h\beta^{-1}\FI(\rho_k\|\rho^*)
- \frac{3h^2}{4}\frac{d^2}{dt^2}\KL(\rho_k\|\rho^*) \notag\\
&\quad
- \frac{h^2}{2}\beta^{-2}\bigg[
\int\!\left\|\nabla^2\log\frac{\rho_k}{\rho^*}\right\|_{\mathrm{F}}^2 \rho_k \, dx
+ \int\!\left\|\nabla\log\frac{\rho_k}{\rho^*}\right\|_2^4 \rho_k \, dx
\bigg]
+ \mathcal{O}(h^3)\notag\\
=\ &
h\beta^{-1}\FI(\rho_k\|\rho^*)
- h^2\frac{d^2}{dt^2}\KL(\rho_k\|\rho^*) \notag\\
&\quad
+ \frac{h^2\beta^{-1}}{2}
\int\!\left\langle
\nabla\log\frac{\rho_k}{\rho^*},\,
\nabla^2V \nabla\log\frac{\rho_k}{\rho^*}
\right\rangle \rho_k \, dx
- \frac{h^2\beta^{-2}}{2}
\int\!\left\|\nabla\log\frac{\rho_k}{\rho^*}\right\|_2^4 \rho_k \, dx
+ \mathcal{O}(h^3)\,,\notag
\end{align}
where we used the inequality
\[
\|A\|_{\mathrm{F}}^2 + \|x\|_2^4
\geq -2 \sum_{ij} x_i a_{ij} x_j
= -2 x^T A x\,.
\]

For the fourth-order term, Lemma~\ref{Lemma_4th_order} shows that, along the continuous Fokker--Planck flow,
\[
\frac{\partial}{\partial t}
\int \left\|\nabla\log\frac{\rho_t}{\rho^*}\right\|_2^4 \rho_t \, dx
\leq
-4\alpha
\int \left\|\nabla\log\frac{\rho_t}{\rho^*}\right\|_2^4 \rho_t \, dx\,,
\]
which implies exponential decay.

For the second-order time derivative, since $\rho_k$ is uniformly $C^{3,1}$, the KL functional along the flow admits a locally uniform third derivative. Hence,
\begin{align}
\label{eqn:discrete_dt2KL}
\frac{d^2}{dt^2}\KL(\rho_k\|\rho^*)
&=
\frac{1}{h}
\bigg[
\frac{d}{dt}\KL(\rho_{t_k+h}\|\rho^*)
- \frac{d}{dt}\KL(\rho_k\|\rho^*)
\bigg]
+ \mathcal{O}(h) \notag\\
&=
\frac{\beta^{-1}}{h}
\big[
\FI(\rho_k\|\rho^*)
- \FI(\rho_{k+1}\|\rho^*)
\big]
+ \mathcal{O}(h)\,,
\end{align}
where we used
\[
\frac{d}{dt}\KL(\rho_k\|\rho^*) = -\beta^{-1}\FI(\rho_k\|\rho^*)\,,
\qquad
\FI(\rho_{k+1}\|\rho^*)
= \FI(\rho_{t_k+h}\|\rho^*) + \mathcal{O}(h^2)\,.
\]

Substituting \eqref{eqn:4th_exp} and \eqref{eqn:discrete_dt2KL} into \eqref{eqn:decay_KL_2}, we obtain
\begin{align}
\label{eqn:KL_decay_pre}
&\KL(\rho_{k+1}\|\rho^*) + \frac{h}{\beta}\FI(\rho_{k+1}\|\rho^*) \\
\leq\ &
\KL(\rho_k\|\rho^*)
- \frac{h}{\beta}\bigg(1-\alpha\frac{h}{2}\bigg)\FI(\rho_k\|\rho^*)
+ \frac{h^2}{2}M_0 \exp(-4\alpha t_k)
+ \mathcal{O}(h^3)\,, \notag
\end{align}
where
\[
M_0 := \beta^{-2}
\int\!\left\|\nabla\log\frac{\rho_0}{\rho^*}\right\|_2^4 \rho_0 \, dx\,,
\]
and we used $\nabla^2 V \succeq \alpha I$.

Using the PL inequality \eqref{PL_ineq}, \eqref{eqn:KL_decay_pre} implies
\begin{align*}
\left(1+2\alpha h\right)\KL(\rho_{k+1}\|\rho^*)
\leq
\big(1-\alpha^2 h^2\big)\KL(\rho_k\|\rho^*)
+ \frac{h^2}{2}M_0 \exp(-4\alpha hk)
+ \mathcal{O}(h^3)\,,
\end{align*}
which is equivalent to
\begin{align}
\label{eqn:KL_one_step_final}
\KL(\rho_{k+1}\|\rho^*)
&\leq
\frac{1-\alpha^2 h^2}{1+2\alpha h}\KL(\rho_k\|\rho^*)
+ \frac{h^2}{2(1+2\alpha h)}M_0 \exp(-4\alpha hk)
+ \mathcal{O}(h^3)\notag\\
&=
\left(1-h\alpha\frac{2+\alpha h}{1+2\alpha h}\right)\KL(\rho_k\|\rho^*)
+ \frac{h^2}{2}M_0 \exp(-4\alpha hk)
+ \mathcal{O}(h^3) \notag\\
&=
\left[1-2\alpha h + 3\alpha^2 h^2\right]\KL(\rho_k\|\rho^*)
+ \frac{h^2}{2}M_0 \exp(-4\alpha hk)
+ \mathcal{O}(h^3)\,.\notag
\end{align}
\end{proof}

The next lemma is used in the proof of Theorem \ref{Thm_KL_decay} to derive the exponential decay of KL divergence with a bias term. 
\begin{lemma}
\label{Lemma_sequence_converge}
For a sequence that satisfies
\[
a_{k+1} \le (1-c_1h)a_k + h^2c_2 e^{-c_3kh} + \mathcal{O}(h^3)\,,
\]
where $h,c_i>0$ and $c_1h<1$, we have for all $k\ge 1$
\[
a_k \le (1-c_1h)^k a_0
+ h^2c_2\,k\,\max\!\big\{(1-c_1h)^{k-1},\;e^{-c_3(k-1)h}\big\}
+ \mathcal{O}(h^3)\,.
\]
In particular, for all $k\ge 1$,
\[
a_k \le e^{-c_1kh}a_0
+ h^2c_2\,k\,e^{-\min\{c_1,c_3\}(k-1)h}
+ \mathcal{O}(h^3)\,.
\]
\end{lemma}

\begin{proof}
   We first show the sequence satisfies the following inductive relationship 
    \[
    a_{k+1} \leq (1-c_1h)^{k+1} a_0+  h^2c_2\sum_{j=0}^{k}(1-c_1h)^j\exp(-c_3(k-j)h) + \mathcal{O}(h^3)\,.
    \]
It is true for $k = 0$. And for the case $k\geq 1$, we have
    \begin{align*}
&a_{k+1}\leq (1-c_1h)a_k + h^2c_2\exp(-c_3kh) + \mathcal{O}(h^3)\\
\leq &(1-c_1h)^{k+1}a_0 + h^2c_2\left[\sum_{j=0}^{k-1}(1-c_1h)^{j}\exp(-c_3(k-1-j)h) + \exp(-c_3kh)\right]+ \mathcal{O}(h^3)\\
=& (1-c_1h)^{k+1}a_0 + h^2c_2\sum_{j=0}^k(1-c_1h)^j\exp(-c_3(k-j)h)+ \mathcal{O}(h^3)\,.
    \end{align*}

For the term under summation, set
\[
b_j := (1-c_1h)^j e^{-c_3(k-j)h}
= e^{-c_3kh}\Big((1-c_1h)e^{c_3h}\Big)^j\,.
\]
Since $b_j\ge0$, we may bound
\[
\sum_{j=0}^k b_j \le (k+1)\max_{0\le j\le k} b_j\,.
\]
Because $b_j$ is geometric in $j$, its maximum on $\{0,\dots,k\}$ is attained at an endpoint, hence
\[
\max_{0\le j\le k} b_j = \max\{b_0,b_k\}
= \max\{e^{-c_3kh},(1-c_1h)^k\}\,.
\]
Therefore
\[
\sum_{j=0}^k(1-c_1h)^j e^{-c_3(k-j)h}
\le (k+1)\max\!\big\{(1-c_1h)^k,\;e^{-c_3kh}\big\}\,.
\]
Substituting this into the inductive estimate yields
\[
a_{k+1}\le (1-c_1h)^{k+1}a_0
+ h^2c_2\,(k+1)\max\!\big\{(1-c_1h)^k,\;e^{-c_3kh}\big\}
+ \mathcal{O}(h^3)\,.
\]
Replacing $k+1$ by $k$ gives the claim. 

\end{proof}

\section{Laplace Approximation to the Kernel Formula}
\label{sec_Laplace}

In this section, we show supplementary results on the pointwise approximation provided by the regularized Wasserstein proximal operator \eqref{rho_T_BRWP} to the evolution of the Fokker-Planck equation \eqref{eqn_FP} under the convexity condition when $h$ is small by utilizing the Laplace method.
Although this analysis is not required for our main convergence result, it offers additional insight into how the kernel formula approximates the Fokker–Planck dynamics at the pointwise level. In particular, it clarifies how the scheme behaves in settings where pointwise accuracy is essential—such as optimization procedures involving a decaying inverse-temperature schedule.

Our strategy is to employ the \textbf{Laplace method} up to two terms \citep{integral_eqn}
\begin{equation}
\label{Laplace_general}
    \int_{\R^d} g(x) \exp\left(-\frac{f(x)}{h} \right) dx = (2\pi h)^{d/2}\exp\left(-\frac{f(x^*)}{h}\right)\left[ \frac{g(x^*)}{|\nabla^2 f(x^*)|^{1/2}} + \frac{h}{2}H_1(x^*)  +\mathcal{O}(h^2)\right]\,,
\end{equation}
where $x^*=\argmin_x f(x)$ is the unique minimizer of $f$, $|\nabla^2 f|$ is the determinant of the Hessian matrix of $f$, and $H_1$ is the first-order term of the expansion. The explicit form for $H_1$ is given below, with its detailed derivation available in Section 8.3 of \citep{integral_eqn}. Writing $f_{p} = \frac{\partial}{\partial x_p}f$ for $p = 1,\cdots,d$, we have
\begin{align}
\label{eqn_H1}
H_1(x^*) =  -|\nabla^2 f(x^*)|^{-1/2}&\bigg\{-f_{srq}B_{sq}B_{rp}g_p + \text{Tr}(CB)
\\&+g\left[f_{pqr}f_{stu}\left(\frac{1}{4}B_{ps}B_{qr}B_{tu}+\frac{1}{6}B_{ps}B_{qt}B_{ru}\right)-\frac{1}{4}f_{pqrs}B_{pr}B_{qs} \right] \bigg\}_{x=x^*}\,,\notag
\end{align}
where we use the summation convention to sum over all indices from $1$ to $d$. The matrices in the expression are defined as
\[
C = \{g_{pq}\}\,,\quad B =\{B_{pq}\}\,,\quad \sum B_{pq}f_{qr}(x^*) = \delta_{pr}\,.
\]
We can then apply this approximation to the kernel formula of the regularized Wasserstein proximal operator in \eqref{rho_T_BRWP} to obtain the asymptotic expansion when $h$ is small. In Theorem \ref{Thm_rho_T}, we will show that $\K_V^h\rho_0$, computed from the kernel formula satisfies
\[
\K_V^h\rho_0(x)  = \rho_0(x) + h\frac{\partial\rho_0}{\partial t}\bigg|_{t=t_0}(x) + \mathcal{O}(h^2)\,,
\]
where $\rho_0$ satisfies the Fokker-Planck equation at time $t_0$. The proof relies on exploring the explicit representations of the first two terms in the approximation \eqref{Laplace_general}, the Taylor expansion of the potential function $V$ (by the bounded derivative assumption), and the uniqueness of the minimizer $x^*$ in \eqref{eqn_H1}, which follows from the log-concavity assumption.

\subsection{Approximation to the normalization term}

Firstly, we derive the approximation result for the denominator in the kernel formula \eqref{rho_T_BRWP}, as shown in the following lemma.

\begin{lemma}
\label{lemma_lap_deno}
For any \(y\), assuming \(h\Delta V(s_y) \leq 1\), we have
    \begin{align}
    \label{normal_approx}
& \left(\frac{\beta}{4\pi h}\right)^{d/2}\int_{\mathbb{R}^d} \exp\left(-\frac{\beta}{2} \left(V(z) + \frac{||z - y||_2^2}{2h}\right)\right) dz \\
 = &\frac{1}{1+h/2\Delta V(\ts_{y})} \exp\left(-\frac{\beta}{2}\bigg(V(\ts_{y})+\frac{\|\ts_{y}-y\|_2^2}{2h}\bigg)\right) + \mathcal{O}(h^2)\,,\notag
    \end{align}
    where
    \[
   s_y = y-h\nabla V(s_y)\,,\quad \ts_{y} = y - h\nabla V(y)  \,.
    \]
\end{lemma}

\begin{proof}
For the normalization term in \eqref{rho_T_BRWP} and fixed \(y\), we have
\begin{align}
\label{Lap_app_mid}
& \left(\frac{\beta}{4\pi h}\right)^{d/2}\int_{\mathbb{R}^d} \exp\left(-\frac{\beta}{2} \left(V(z) + \frac{||z - y||_2^2}{2h}\right)\right) dz \\
= &\left(\frac{\beta}{4\pi h}\right)^{d/2}\int_{\mathbb{R}^d} \exp\left(-\frac{\beta}{2h}\left(hV(z)+\frac{\|z-y\|_2^2}{2}\right)  \right) dz\notag\\
= &\exp\left(-\frac{\beta}{2h}\left(hV(s_y)+\frac{\|s_y-y\|_2^2}{2}\right)\right) \left[ \frac{1}{|1+h\nabla^2 V(s_y)|^{1/2}} + \frac{h}{\beta}H_1(s_y) + \mathcal{O}\left(\frac{2h}{\beta}\right)^2\right]\,,\notag
\end{align}
where \(s_y\) is defined as the $\argmin$ of the exponent
\[
s_y := \argmin_{z}\left\{hV(z)  +\frac{||y-z||_2^2}{2}\right\}\,.
\]
Next, we verify that \(H_1(s_y)\) is of order \(\mathcal{O}(h)\). Concerning the general form of Laplace method \eqref{Laplace_general}, we note
\[
g(z) = 1\,, \quad f(z) = hV(z) + \frac{\|z-y\|_2^2}{2}\,, \quad f_{pq}(z) = h\frac{\partial^2 V}{\partial z_p\partial z_q}(z) + \delta_{pq}\,, \quad f_{pqr}(z) =h\frac{\partial^3 V }{\partial z_p\partial z_q \partial z_r}(z) \,,
\]
where the quadratic term becomes a constant after taking the second-order partial derivative and vanishes after taking the third-order partial derivative. Given that the \(B\) matrix in \eqref{eqn_H1} is the inverse of a diagonal matrix plus \(h\nabla^2 V\), the magnitude of the diagonal entries of \(B\) is \(\mathcal{O}(1)\). 

Thus, looking at the expression of \(H_1(s_y)\), we confirm that
\[
H_1(s_y) = \mathcal{O}(h)\,,
\]
as all terms in \eqref{eqn_H1} are of the order $\mathcal{O}(h)$. Then the approximation in \eqref{Lap_app_mid} is simplified to
\begin{align}
\label{Lap_app_mid_2}
& \left(\frac{\beta}{4\pi h}\right)^{d/2}\int_{\mathbb{R}^d} \exp\left(-\frac{\beta}{2} \left(V(z) + \frac{||z - y||_2^2}{2h}\right)\right) dz \\
= &\frac{1}{|1+h\nabla^2 V(s_y)|^{1/2}} \exp\left(-\frac{\beta}{2h}\left(hV(s_y)+\frac{\|s_y-y\|_2^2}{2}\right)\right) + \mathcal{O}(h^2)\,.\notag
\end{align}

Next, we compute \(s_{y}\) more explicitly. Since \(V(z)\) is convex, so is \(hV(z)+\|z-y\|_2^2/2\) for any \(y\) and \(h\) with respect to $z$. This ensures \(s_{y}\) is unique. By the first-order optimality condition, \(s_y\) satisfies
\[
 h\nabla V(s_y )  +s_y = y\,.
\]

To further simplify the expression and compute the integral explicitly, we define an approximation for \(s_y\) as
\begin{equation}
    \label{def_tildes}
    \ts_{y} := y - h\nabla V(y)  \,.
\end{equation}

By definition of $\ts_y$, we have
\[
\|\ts_{y}-s_y\|_2 = h\|\nabla V(s_y)-\nabla V(y)\|_2 \leq L h\|s_y-y\|_2 =L h^2 \|\nabla V(s_y)\|_2 = \mathcal{O}(h^2)\,.
\]
The inequality holds when \(\nabla V\) is Lipschitz continuous with constant \(L\) which can be implied by \(\nabla^2 V\) is bounded by \(L\) along the line segment connecting \(y\) and \(s_y\).

Finally, for the determinant of the Hessian matrix, we apply the Taylor expansion of the determinant operator and the square root function to obtain
\begin{align}
\label{eqn_Taylor_det}
&|I+h\nabla^2 V(s_y)| = 1+ h\Delta V(s_y) + \mathcal{O}(h^2)\,,\\
&|I+h\nabla^2 V(s_y)|^{1/2} =1 + \frac{h}{2}\Delta V(s_y) + \mathcal{O}(h^2)\,, \notag
\end{align}
which converges when \(h\Delta V(s_y) \leq 1\). 

We arrive at the desired result by combining equations \eqref{Lap_app_mid} to \eqref{eqn_Taylor_det}.  
\end{proof}\hspace*{\fill}

\subsection{Approximation to the kernel formula}

Letting 
\[
V_0(x) = -\beta^{-1}\log\rho_0(x)\,,
\]
we follow a similar approach as in the proof of Lemma \ref{lemma_lap_deno} to derive the asymptotic approximation of \(\K_V^h\rho_0\).  

\begin{theorem}
\label{Thm_rho_T}
For fixed \(x\), assume that \(\Delta V_0\) is bounded above on the line segment connecting \(x\) and \(r_x\), where \(r_x\) satisfies \(r_x = x+h\nabla(V-2V_0)(r_x)\). Moreover, when \(h\Delta V(r_x)\leq 1\) and \(h\Delta V_0(r_x) \leq 1\), we have
\begin{equation}
  \label{rho_T_asym}
    \K_V^h\rho_0(x) = \rho_0(x)\big[1-\beta h\nabla (V-V_0)\cdot \nabla V_0(x)+h\Delta(V-V_0)(x)\big] + \mathcal{O}(h^2)\,.
\end{equation}

If $\rho_0$ satisfies the Fokker-Planck equation at time $t_0$, we have
\begin{equation}
\label{rho_T_FP}
     \K_V^h\rho_0(x)  = \rho_0 + h\beta^{-1}\nabla\cdot\left(\nabla\log\frac{\rho_0}{\rho^*}\rho_0\right) + \mathcal{O}(h^2)
= \rho_0 + h\frac{\partial\rho}{\partial t}\bigg|_{t=t_0} + \mathcal{O}(h^2)\,.
\end{equation}
\end{theorem}

\begin{proof} 
 Substituting \eqref{normal_approx} into the expression for $ \K_V^h\rho_0(x)$ in \eqref{rho_T_BRWP}, we arrive
{\footnotesize
\begin{align}
\label{rho_T_mid}
    & \K_V^h\rho_0(x)(x) =   \exp\left(-\frac{\beta}{2}V(x)\right)\int_{\R^d}\frac{\exp\left(-\beta \frac{||x-y||_2^2}{4h }\right) }{\int_{\R^d}\exp\big[-\frac{\beta}{2 }\big( V(z) +\frac{||z-y||_2^2}{2h}\big)\big]dz}\rho_0(y)dy \\
=&\exp\bigg(-\frac{\beta}{2}V(x)\bigg)\int_{\R^d}\frac{1+\frac{h}{2}\Delta V(\ts_{y})}{(4\pi h /\beta)^{d/2}} \exp\bigg[-\frac{\beta}{2}\bigg(\frac{||x-y||_2^2}{2h} + 2V_0(y) - V(\ts_y)-\frac{||y-\ts_y||_2^2}{2h}\bigg)\bigg] dy + \mathcal{O}(h^2) \,.\notag
\end{align}}
For fixed $x$, to apply the approximation with the Laplace method as in \eqref{Laplace_general}, we let
\begin{equation}
\label{def_f_rho_T}
    f(y) = \frac{||x-y||_2^2}{2} + 2h V_0(y) -hV(\ts_y)-\frac{||y-\ts_y||_2^2}{2 }\,, \quad g(y) = 1\,.
\end{equation}

Then we may write the minimizer for $f(y)$ which is a function of $x$ as
\[
r_x = \argmin_y f(y) = \argmin_y\bigg\{ \frac{||x-y||_2^2}{2} + 2h V_0(y) -hV(\ts_y)-\frac{||y-\ts_y||_2^2}{2 }\bigg)\bigg\}\,.
\]
Similar to the derivation for Lemma \ref{lemma_lap_deno}, the first-order optimality condition leads to
\[
    - (r_x-x)  -2h\nabla V_0(r_x) + \bigg[ h\frac{\partial \ts_{r_x}}{\partial r_x}\nabla V(\tilde{s}_{r_x})  + \left(1-\frac{\partial \ts_{r_x}}{\partial r_x}\right)(r_x-\ts_{r_x}) \bigg] = 0\,,
\]
where $\frac{\partial \ts_{r_x}}{\partial r_x}$ is the Jacobian of $\ts_{r_x}$.

To simplify the expression for $r_x$, by definition of $\tilde{s}_{r_x}$ in \eqref{def_tildes} and replacing $y$ with $r_x$, we have
\[
r_x-\tilde{s}_{r_x} = h\nabla V(r_x)\,.
\]
This leads to 
\begin{align*}
& - (r_x-x)   -2h\nabla V_0(r_x) +h\nabla V(r_x)  = 0 \quad \Rightarrow \quad r_x = x+ h\nabla(V-2V_0)(r_x)\,,
\end{align*}
as the term involves $\frac{\partial \ts_{r_x}}{\partial r_x}$ cancel out.

Then as a similar argument as in the proof of Lemma \ref{lemma_lap_deno}, we can define a linearized approximate solution to $r_x$ as
\begin{equation}
    \tr_{x}  = x+ h\nabla(V-2V_0)(x)\,,
\end{equation}
where $|\tr_{x}-r_x| =\mathcal{O}(h^2)$ under the assumption  $\nabla^2 (V_0-V)$ is bounded on the line segment connection $x$ and $r_x$.

Now, note that the Hessian of $f(r_x)$ in \eqref{def_f_rho_T} will be
\[
\nabla^2 f(r_x) = 1 + h\nabla^2(V_0-V)(r_x) -\frac{h^2}{2}\nabla^3 V(r_x) = 1+ h\nabla^2(V_0-V)(r_x) + \mathcal{O}(h^2)\,.
\]

Using the Taylor expansion for the determinant function in \eqref{eqn_Taylor_det} and the definition of $\tr_x$, we are ready to apply the Laplace method to $\K_V^h\rho_0$ in \eqref{rho_T_mid} to get
{\footnotesize
\begin{align}
\label{rho_T_mid_2}
    & \K_V^h\rho_0(x)(x) = \exp\left(-\frac{\beta}{2}V(x)\right)\int_{\R^d}\frac{\exp\left(-\beta \frac{||x-y||_2^2}{4h }\right) }{\int_{\R^d}\exp\big[-\frac{\beta}{2 }\big( V(z) +\frac{||z-y||_2^2}{2h}\big)\big]dz}\rho_0(y)dy\\
    = & \frac{\exp\left(-\frac{\beta}{2}V(x)\right)\left[1+\frac{h}{2}\Delta V(\ts_{r_x})\right]}{|1+h\nabla^2(V_0-V)(r_x)+\mathcal{O}(h^2)|^{1/2}}\exp\left[-\frac{\beta}{2}\bigg(\frac{||x-r_x||_2^2}{2h}-\frac{||r_x-\ts_{r_x}||_2^2}{2h} +2 V_0(r_x) - V(\ts_{r_x}) \bigg)\right] + \mathcal{O}(h^2)\notag
    \\
    =&\frac{\exp\big(-\frac{\beta}{2}V(x)\big)[1+ \frac{h}{2} \Delta V(\ts_{\tr_x})]}{1 + \frac{h}{2} \Delta(2V_0-V)(\tr_{x})}\exp\bigg[-\frac{\beta}{2}\bigg(\frac{||x-\tr_x||_2^2}{2h}-\frac{||\tr_x-\ts_{\tr_x}||_2^2}{2h} +2 V_0(\tr_x) - V(\ts_{\tr_x}) \bigg)\bigg] + \mathcal{O}(h^2)\,.\notag
 \end{align}
}

For the exponent in the second exponential function in the last line of \eqref{rho_T_mid_2}, using the definition of $\tilde{r}_x$ and $\tilde{s}_x$ and omitting the factor $-\beta/2$ for clarity, it can be simplified as
\begin{align*}
&2V_0(\tr_x) - V(\ts_{\tr_x}) + \frac{h}{2}\|\nabla(V-2V_0)(x)\|_2^2-\frac{h}{2}\|\nabla V(\tr_x)\|_2^2 + \mathcal{O}(h^2)\\
=&2V_0\left(x+h\nabla (V-2V_0)(x)\right)- V\left(x-2h\nabla V_0(x)\right)  + \frac{h}{2}\|\nabla(V-2V_0)(x)\|_2^2-\frac{h}{2}\|\nabla V(x)\|_2^2 + \mathcal{O}(h^2)\\
=& 2V_0(x) +2 h\nabla V_0\cdot\nabla (V-2V_0)(x) - V(x) + 2h\nabla V\cdot\nabla V_0(x) \\&+ \frac{h}{2}\|\nabla(V-2V_0)(x)\|_2^2-\frac{h}{2}\|\nabla V(x)\|_2^2 + \mathcal{O}(h^2)\\
=&2V_0(x) - V(x) + 2h \nabla(  V- V_0)\cdot \nabla V_0(x) + \mathcal{O}(h^2)\,,
 \end{align*}
where we have used the Taylor expansion on $V_0$ and $V$, and the relation
\[
\tilde{s}_{\tr_x} = \tr_x - h\nabla V(\tr_x) = x + h\nabla(V-2V_0)(x) - h\nabla V(x) + \mathcal{O}(h^2) = x-2h\nabla V_0(x) + \mathcal{O}(h^2)\,.
\]

Lastly, for the coefficient before the exponential term in \eqref{rho_T_mid_2}, with the help of the Neumann series, we derive
\[
\frac{1+ \frac{h}{2} \Delta V(\ts_{\tr_x})}{1 + \frac{h}{2} \Delta(2V_0-V)(\tr_{x})} = \bigg[1+\frac{h}{2}\Delta V(x)\bigg]\bigg[1-\frac{h}{2}\Delta(2V_0-V)(x)\bigg] + \mathcal{O}(h^2) = 1+h \Delta(V- V_0)(x) + \mathcal{O}(h^2)\,,
\]
under the assumption that $|h\Delta(2V_0-V)(x)|\leq 2$. Combining the above expression, we arrive at the desired result in Theorem \ref{Thm_rho_T}.
\end{proof}\hspace*{\fill}

We observe that it is sufficient for the Hessian of \( V_0 \) to be bounded only within a \(h \)-neighborhood around the sampling point \( x \), rather than requiring global boundedness, which can be challenging to verify in practice. Additionally, since most sampling points are typically situated near the high-density regions of \(\rho^*\), it is adequate for \( V_0 \) to exhibit reasonable smoothness specifically within these high-density areas.

\end{appendix}
\bibliography{ref}
\end{document}